\begin{document}

\def\map{\mathop{\rm map}\nolimits}
\def\ker{\mathop{\rm ker}\nolimits}
\def\im{\mathop{\rm im}}
\def\precdot{\mathop{\prec\!\!\!\cdot}\nolimits}
\def\colim{\mathop{\rm colim}}
\def\lim{\mathop{\rm lim}}
\def\ohom{\mathop{\rm \overline{Hom}}\nolimits}
\def\Hom{\mathop{\rm Hom}\nolimits}
\def\Rep{\mathop{\rm Rep}\nolimits}
\def\Aut{\mathop{\rm Aut}}
\def\id{\mathop{\rm id}\nolimits}
\def\Ast{\mathop{*}}
\def\vert{\mathop{\textsf{vert}}\nolimits}
\def\edge{\mathop{\textsf{edge}}\nolimits}
\def\groups{\mathop{\mathcal Groups}\nolimits}
\def\g{\mathop{\mathcal Graphs}\nolimits}
\def\mg{\mathop{{\mathcal mm}\mbox{\rm{-}}{\mathcal Graphs}}\nolimits}
\def\ug{\mathop{{\mathcal uu}\mbox{\rm{-}}{\mathcal Graphs}}\nolimits}

\def\presup#1{{}^{#1}\hspace{-.11em}}      
\def\presub#1{{}_{#1}\hspace{-.08em}}      

\def\intertitle#1{

\medskip

{\em \noindent #1}

\smallskip
}

\newtheorem{thm}{Theorem}[section]
\newtheorem{theorem}[thm]{Theorem}
\newtheorem{lemma}[thm]{Lemma}
\newtheorem{corollary}[thm]{Corollary}
\newtheorem{proposition}[thm]{Proposition}
\newtheorem{example}[thm]{Example}

\theoremstyle{remark}
\newtheorem{rem}[thm]{Remark}

\makeatletter
\let\c@equation\c@thm
\makeatother
\numberwithin{equation}{section}



\newcommand{\comment}[1]{}

\newif\ifshortlabels\shortlabelstrue

\ifshortlabels
  \newcommand{\mylabel}[1]{\label{#1}}
  \newcommand{\mylabeln}[1]{\label{#1}}
\else
  \newcommand{\mylabel}[1]{(#1) \label{#1}\ \ \ \ \ \ \ \ \ \ \ \ \ \ \\}
  \newcommand{\mylabeln}[1]{(#1) \label{#1}}
\fi



\title{An ``almost'' full embedding \\ of the category of graphs \\
  into the category of groups}
\author{Adam J. Prze\'zdziecki$^1$}
\address{Warsaw University of Life Sciences - SGGW, Warsaw, Poland}
\email{adamp@mimuw.edu.pl}

\maketitle
\begin{center}
\today
\end{center}

\comment{
\begin{center}
\vspace{-0.7cm}
{\small \it Institute of Mathematics, Warsaw University, Warsaw, Poland \\
Email address: \verb+adamp@mimuw.edu.pl+ } \\
\end{center}
}

\footnotetext[1]{The author was partially supported by grant
  N N201 387034 of the Polish Ministry of Science and Higher Education.}

\begin{abstract}
  We construct a functor $F:\g\to{\mathcal Groups}$
  which is faithful and ``almost'' full,
  in the sense that every nontrivial group homomorphism
  $FX\to FY$ is a composition of an inner automorphism of $FY$ and a homomorphism of the form $Ff$, for a unique map of graphs $f:X\to Y$. When $F$ is composed with the Eilenberg-Mac Lane space construction $K(FX,1)$ we obtain an embedding of the category of graphs into the unpointed homotopy category which is full up to null-homotopic maps.

  We provide several applications of this construction
  to localizations (i.e. idempotent functors); we show that the questions:\\
  (1) Is every orthogonality class reflective?\\
  (2) Is every orthogonality class a small-orthogonality class?\\
  have the same answers in the category of groups as in the category of graphs. In other words they depend on set theory: (1) is equivalent to weak Vop\v enka's principle and (2) to Vop\v enka's principle. Additionally, the second question, considered in the homotopy category, is also equivalent to Vop\v enka's principle.

\vspace{4pt}
  {\noindent\leavevmode\hbox {\it MSC:\ }}
  {18A40; 20J15; 55P60; 18A22; 03E55} \\
  {\em Keywords:} Category of groups; Localization; Large cardinals
  \vspace{-9pt}

\end{abstract}


\section{Introduction}
\mylabel{section-introduction}

  Matumoto \cite{matumoto} proved that for any graph $\Gamma$
  there exists a group $G$ whose outer automorphism group is isomorphic
  to the group of automorphisms of $\Gamma$. His result received a considerable
  attention since every group can be realized as the group
  of automorphisms of some graph.

  The main result of this article may be viewed as a functorial version
  of the above. We construct a functor $F$ from the category of graphs
  to the category of groups which is faithful and ``almost'' full,
  in the sense that the maps
  $$F_{X,Y}:\Hom_{\g}(X,Y)\to\Hom_{\mathcal Groups}(FX,FY)$$
  induce bijections
  $$\overline{F}_{X,Y}:\Hom_{\g}(X,Y)\cup\{*\}\to\Rep(FX,FY).$$
  Here $\Rep(A,B)=\Hom_{\mathcal Groups}(A,B)/B$ where
  $B$ acts on $\Hom_{\mathcal Groups}(A,B)$ by
  conjugation and $*$ is an additional point which we send to the
  trivial element of $\Rep$. A {\em graph} is a set with a binary
  relation.

  Full and faithful functors are convenient tools that allow one to
  transfer constructions and properties between categories.
  The category of graphs is very comprehensive and well researched.
  Ad\'{a}mek and Rosick\'y proved in \cite[Theorem 2.65]{adamek-rosicky}
  that every accessible category has a full embedding into the category
  of graphs. Instead of quoting the complete definition of accessible
  categories let us mention that these contain, as full subcategories,
  ``most'' of the ``non-homotopy''
  categories: the categories of groups, fields, $R$-modules, Hilbert spaces, posets
  (i.e. partially ordered sets), simplicial sets, metrizable spaces or
  CW-spaces and continuous maps, the category of models of some first-order theory,
  and many more. In fact, under a large cardinal hypothesis that the measurable cardinals are bounded above, any concretizable category fully embeds into the category of graphs \cite[Chapter III, Corollary 4.5]{trnkova}.

  In this article we describe several applications of the functor $F$,
  constructed in Section \ref{section-construction}; the choice
  of the applications is
  strongly affected by the interests of the author.

  A {\em localization} may be defined as a functor from a category $\mathcal{C}$
  to itself that is a left adjoint to inclusion of a subcategory
  $\mathcal{D}\subseteq\mathcal{C}$; it is an idempotent functor which
  may be viewed as a projection of
  $\mathcal{C}$ onto the subcategory $\mathcal{D}$. A more common definition of localization
  can be found in Section \ref{section-orthogonal}.
  Libman \cite{libman} inspired a question of whether the values of
  localization functors
  at finite groups can have arbitrarily large cardinalities.
  For all finite simple groups such localizations were constructed by
  G\"obel, Rodr\'iguez, Shelah in \cite{grs}, \cite{gs}, and for some such groups by
  the author in \cite{ap}. In Section \ref{section-large-localizations}
  we see that the functor $F$ immediately produces yet another such
  construction.

  This article was motivated by another application.
  Ad\'{a}mek and Rosick\'y proved in \cite[Chapter 6]{adamek-rosicky}
  that large cardinal axioms called Vop\v enka's
  principle and weak Vop\v enka's principle
  (both formulated in the category of graphs) have many implications
  related to localizations and the structure of accessible categories. These axioms are
  believed to be consistent with the standard set theory ZFC while
  their negations are known to be consistent with ZFC. Casacuberta,
  Scevenels and Smith \cite{casacuberta-advances} extended
  some of these implications to the homotopy category.
  In Section \ref{section-homotopy} we see that a functor
  which sends a graph $\Gamma$ to the Eilenberg-Mac Lane
  space $K(F\Gamma,1)$ is,
  up to null-homotopic maps, a full embedding of the
  category of graphs into the (unpointed) homotopy category.
  We strengthen the results of
  \cite{casacuberta-advances} by showing that Vop\v enka's principle
  is actually equivalent to its formulation in the homotopy category:
  every orthogonality class in the homotopy category is a small-orthogonality
  class in the homotopy category
  (i.e. it is associated with an $f$-localization of Bousfield
  and Dror Farjoun \cite{farjoun})
  if and only if this is the case in the category
  of graphs.

  On the other hand, it was hoped that some consequences of Vop\v enka's
  principles in the category of groups might be provable in ZFC.
  Casacuberta and Scevenels \cite{casacuberta} hint that this
  might be the case for a ``long standing open question in
  categorical group theory''
  that asks if every orthogonality class $\mathcal{D}$,
  in the category of groups, is reflective -- that is, if the
  inclusion functor $\mathcal{D}\to{\mathcal Groups}$ has a left
  adjoint. In Section \ref{section-orthogonal} we find that this
  question is actually equivalent to weak Vop\v enka's principle.

  The work presented in this paper has begun during the author's
  visit to Centre de Recerca Mathem\`{a}tica, Bellaterra, at
  the inspiration of Carles Casacuberta.

\section{Definitions}

A {\em graph} $\Gamma$ is a set of {\em vertices}, $\vert\Gamma$,
together with a set of {\em edges}, which is a binary relation
$\edge\Gamma\subseteq \vert\Gamma\times\vert\Gamma$. A morphism
$\Gamma\to\Delta$ between graphs is an edge preserving
function $\vert\Gamma\to\vert\Delta$. The category of graphs is
denoted $\g$.

An {\em m-graph} (m for multi-edge) is a category $\Gamma$ whose
objects form a disjoint union of a set of vertices, $\vert\Gamma$,
and a set of edges, $\edge\Gamma$. Each nonidentity morphism of an
m-graph $\Gamma$ has its source in $\edge\Gamma$ and its target in
$\vert\Gamma$. Each edge $e\in\edge\Gamma$ is a source of two
nonidentity morphisms: one labelled $\iota_e$ whose target is the
{\em initial vertex} of $e$, and the other labelled $\tau_e$ whose
target is the {\em terminal vertex} of $e$. Morphisms between
m-graphs are functors
that preserve the edges, the vertices and the labelling:
$f(\iota_e)=\iota_{f(e)}$ and $f(\tau_e)=\tau_{f(e)}$. The
category of m-graphs is denoted $\mg$.

A {\em u-graph} (u for undirected-edge) is an m-graph without the
labelling of morphisms. The category of u-graphs is denoted $\ug$.

A u-graph is usually visualized as in \eqref{equation-circle} where
the nonidentity morphisms are represented by incidence between
edges (intervals) and vertices (small circles). A graph
or an m-graph is similarly visualized, with arrows on its edges.

We have an obvious full and faithful inclusion functor
$I:\g\to\mg$ which has a left adjoint (the edge collapsing functor
$J:\mg\to\g$), that is,
$$\Hom_{\g}(J\Gamma,\Delta)\cong\Hom_{\mg}(\Gamma,I\Delta)$$
where $\Gamma$ is in $\mg$ and $\Delta$ is in $\g$.

A {\em graph of groups} is a functor $G:\Gamma\to\groups$ where
$\Gamma$ is a u-graph and for each morphism $i$ in $\Gamma$, $G(i)$
is a monomorphism. $\Gamma$ is called the underlying u-graph of
$G$.

{\em Convention.} If $G:\Gamma\to\groups$ is a graph of groups and
$a$, $b$ are objects in $\Gamma$, we consider the values of $G$ on
$a$ and $b$, that is, $G_a$ and $G_b$, to be
different whenever $a$ and $b$ are different, and $G$ takes morphisms
to inclusions. In short, we treat $G$ as the image of an inclusion
of $\Gamma$ into $\groups$ all of whose morphisms are inclusions.
The objects of $G$ are called the {\em edge} and the {\em vertex
groups}.

A {\em tree} (a {\em tree of groups}) is a connected u-graph
(graph of groups) without circuits, that is, closed paths without
backtracking.

If $G$ is a group, $g\in G$ and $A\subseteq G$ then $\presup{g}A$
denotes $gAg^{-1}$.

\section{Bass-Serre theory}

In this section we collect facts concerning groups acting on trees,
which will be used later. The key reference is \cite{serre}. The symbol $*_AG_i$
denotes the {\em amalgam} of groups $G_i$ along the common subgroup $A$, and
$\colim G$ denotes the {\em colimit} of a graph of groups $G$.

\begin{lemma}
\mylabel{lemma-bs-injective}
  Let $H_1\subseteq G_1$ and $H_2\subseteq G_2$ and $A$ be a
  common subgroup of $G_1$ and $G_2$. If $H_1\cap A=B=H_2\cap A$
  then the homomorphism $h:H_1*_BH_2\to G_1*_AG_2$ induced by the
  inclusions is injective.
\end{lemma}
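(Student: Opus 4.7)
My plan is to prove this by exploiting the normal form theorem for amalgamated free products.

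First, I would observe that the hypothesis $H_i\cap A=B$ means the natural map of right coset spaces $B\backslash H_i \to A\backslash G_i$ is injective for $i=1,2$: if $h_1A=h_2A$ with $h_1,h_2\in H_i$, then $h_1^{-1}h_2\in H_i\cap A=B$, so $h_1B=h_2B$. Consequently I can choose a right transversal $T_i$ of $B$ in $H_i$ containing $1$, and then extend it to a right transversal $S_i$ of $A$ in $G_i$ (also containing $1$). The crucial point, which also follows from $H_i\cap A=B$, is that an element $t\in T_i\setminus\{1\}$ (i.e.\ $t\in H_i\setminus B$) cannot lie in $A$, for otherwise $t\in H_i\cap A=B$; hence $T_i\setminus\{1\}\subseteq S_i\setminus\{1\}$.

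Next I invoke the normal form theorem (Serre, \cite{serre}, I.1.2): every element of $G_1*_AG_2$ admits a unique expression
\[
a\cdot s^{(1)}s^{(2)}\cdots s^{(n)},\qquad a\in A,
\]
where the $s^{(j)}$ alternate between $S_1\setminus\{1\}$ and $S_2\setminus\{1\}$, and the element is trivial exactly when $a=1$ and $n=0$. The analogous statement holds for $H_1*_BH_2$ with $B$ in place of $A$ and $T_i$ in place of $S_i$.

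Now take a nontrivial element $x\in H_1*_BH_2$ and write it in normal form as $b\cdot t^{(1)}\cdots t^{(n)}$, where $b\in B$ and the $t^{(j)}$ alternate in $T_i\setminus\{1\}$. Under $h$, this expression is sent to the same product viewed inside $G_1*_AG_2$. By the two observations above, $b\in B\subseteq A$ and $t^{(j)}\in S_{i_j}\setminus\{1\}$, still alternating. So this is a valid normal form in $G_1*_AG_2$, and since $x\neq 1$ we have either $b\neq 1$ or $n\geq 1$, so by the uniqueness part of the normal form theorem $h(x)\neq 1$. Hence $h$ is injective.

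The only subtle point of the argument, and the step I would be most careful about, is the passage $T_i\setminus\{1\}\subseteq S_i\setminus\{1\}$, because an injective map of coset spaces does not a priori preserve the distinguished coset; this is exactly what the hypothesis $H_i\cap A=B$ secures. Once this is in place, the rest is a direct application of Serre's normal form theorem.
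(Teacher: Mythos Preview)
Your argument is correct. The paper does not give its own proof but simply cites \cite[\S1.3, Proposition~3]{serre}, and what you have written is precisely the standard normal-form argument behind that proposition, so the approaches coincide.
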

\begin{proof}
  See \cite[\S1.3, Proposition 3]{serre}.
\end{proof}

As a consequence we obtain

\begin{lemma}
\mylabel{lemma-bs-free-product}
  Let $G$ be a graph of groups consisting of one central vertex
  group $C$ and vertex groups $B_i$, $i\in I$, attached to $C$
  along edge groups $A_i$, $i\in I$:
  \begin{center}
  \begin{picture}(60,80)
  \put(0,40){
    \put(0,0){\circle{4}}
    \put(-5,0){\makebox(0,0)[r]{$C$}}
    \put(34,7){\makebox(0,0){$\cdot$}}
    \put(35,0){\makebox(0,0){$\cdot$}}
    \put(34,-7){\makebox(0,0){$\cdot$}}
    \put(2,2){\line(3,2){46}}
    \put(20,20){\makebox(0,0)[b]{$A_i$}}
    \put(50,34){\circle{4}}
    \put(55,34){\makebox(0,0)[l]{$B_i$}}
    \put(2,-2){\line(3,-2){46}}
    \put(20,-20){\makebox(0,0)[t]{$A_j$}}
    \put(50,-34){\circle{4}}
    \put(55,-34){\makebox(0,0)[l]{$B_j$}}
  }
  \end{picture}
  \end{center}
  If $H_i\subseteq B_i$ are subgroups such that $H_i\cap A_i$ is
  trivial for $i\in I$ then the homomorphism
  $h:*_{i\in I}H_i\to\colim G$ induced by the inclusions is
  injective and its image trivially intersects $C$.
\end{lemma}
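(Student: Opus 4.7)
The plan is to prove the finite case first by induction on $|I|$, then bootstrap to arbitrary $I$ by a directed-colimit argument. Throughout I will use $G^* = \colim G$ and treat $C$, $A_i$, $B_i$ as subgroups of $G^*$ (in line with the convention from Section 2).

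\textbf{Base case} $|I|=1$. Here $G^* = C *_{A_1} B_1$. Apply Lemma \ref{lemma-bs-injective} with $G_1 = B_1$, $G_2 = C$, $A = A_1$, and $H_2 = \{1\}\subseteq C$: the hypothesis $H_1\cap A_1 = 1 = \{1\}\cap A_1$ gives injectivity of $H_1 = H_1 *_{1} \{1\}\to B_1*_{A_1}C$. The image sits inside $B_1$, so its intersection with $C$ is contained in $B_1\cap C = A_1$, which meets $H_1$ trivially.

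\textbf{Inductive step} from $n-1$ to $n$ (finite $I$). Let $G'$ denote the sub-graph of groups obtained by deleting the edge $A_n$ and the vertex group $B_n$. The universal property of $\colim$ gives a pushout decomposition $G^* \cong (\colim G') *_{A_n} B_n$. By induction, $\phi: \Ast_{i<n} H_i \to \colim G'$ is injective and $\phi(\Ast_{i<n} H_i)\cap C = 1$; in particular $\phi(\Ast_{i<n} H_i)\cap A_n = 1$ since $A_n\subseteq C$. Together with $H_n\cap A_n = 1$, Lemma \ref{lemma-bs-injective} applied to $\phi(\Ast_{i<n}H_i)\subseteq \colim G'$ and $H_n\subseteq B_n$ amalgamated over $A_n$ yields that $(\Ast_{i<n} H_i) * H_n = \Ast_{i\le n}H_i \to G^*$ is injective.

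To see that the image meets $C$ trivially, write a nontrivial element $g\in \Ast_{i\le n}H_i$ as a reduced word $g = w_1w_2\cdots w_m$ in the free product, with the $w_k$ alternating between $\Ast_{i<n}H_i$ and $H_n$ and each $w_k\ne 1$. By the triviality of the intersection with $A_n$ noted above, each $w_k$ lies in $\colim G' \smallsetminus A_n$ or in $B_n \smallsetminus A_n$, so this word is already in normal form for the amalgam $\colim G' *_{A_n} B_n$ (Bass--Serre / Serre \cite[\S1.2]{serre}). Hence the image of $g$ in $G^*$ has amalgam-length $m$. If that image lies in $C\subseteq \colim G'$ then $m\le 1$ and $g\in \Ast_{i<n}H_i$; the inductive hypothesis then forces $g=1$.

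\textbf{Arbitrary $I$.} Both sides are directed colimits over the poset of finite $J\subseteq I$: on the left $\Ast_{i\in I} H_i = \colim_J \Ast_{i\in J} H_i$, and on the right $\colim G = \colim_J \colim G|_J$ since left adjoints commute with colimits. The transition maps $\colim G|_J \to \colim G|_{J'}$ for $J\subseteq J'$ are injective by iterated application of Lemma \ref{lemma-bs-injective} (attaching one petal at a time, as in the inductive step), so each $\colim G|_J \to \colim G$ is injective and identifies the copies of $C$. Injectivity and trivial intersection with $C$ are both preserved by directed colimits in $\groups$, so the finite case upgrades directly.

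The main obstacle is the ``trivial intersection with $C$'' clause in the inductive step; the injectivity part is a direct pair of applications of Lemma \ref{lemma-bs-injective}, whereas controlling which elements land in $C$ requires the normal-form (length) description of amalgamated products. Passing to infinite $I$ is then routine bookkeeping with directed colimits.
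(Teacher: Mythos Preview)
Your proof is correct and follows the same strategy as the paper: peel off one leaf at a time and apply Lemma~\ref{lemma-bs-injective}, using that the already-assembled free product meets $A_n\subseteq C$ trivially. The only cosmetic difference is that the paper well-orders $I$ and runs a single transfinite induction (whose limit step is exactly your directed-colimit argument), and it leaves implicit the normal-form justification of the ``image meets $C$ trivially'' clause that you spell out.
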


\begin{proof}
  We identify $I$ with an ordinal and proceed by induction. The
  case when $I$ is a singleton is obvious, as is the case when
  $I$ is a limit ordinal and the result is established for all
  $I_0<I$. Suppose that $I=I_0\cup\{i_0\}$ and the result is
  established for $I_0$. Let $G_0$ be the graph of groups obtained
  from $G$ by deleting $B_{i_0}$ and $A_{i_0}$. We have
  $$ \colim G = B_{i_0}*_{A_{i_0}}\colim G_0.$$
  By the inductive assumption, $h$ is injective on $*_{i\in I_0}H_i$
  and $h(*_{i\in I_0}H_i)\cap C$ is trivial, and therefore Lemma
  \ref{lemma-bs-injective} implies the result for $I$.
\end{proof}

The most powerful element of the Bass-Serre theory is the
following.

\begin{theorem}[{\cite[\S4.5, Theorem 9]{serre}}]
\mylabel{theorem-serre-tree}
  Let $G$ be a tree of groups and $T$ the underlying u-graph.
  There exists a u-graph $X$ containing $T$ and an action of
  $G_T=\colim G$ on $X$ which is characterized (up to isomorphism) by
  the following properties:
  \begin{itemize}
    \item[(a)] $T$ is the fundamental domain for $X$ mod $G_T$ and
    \item[(b)] for any $v$ in $\vert T$ (resp. $e$ in $\edge T$)
    the stabilizer of $v$ (resp. $e$) in $G_T$ is $G_v$ (resp. $G_e$).
  \end{itemize}
  Moreover, $X$ is a tree.
\end{theorem}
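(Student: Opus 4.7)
The plan is to give the standard Bass--Serre construction of $X$ by cosets. First I would build the candidate $X$ as follows. Set
$$ \vert X = \coprod_{v\in\vert T} G_T/G_v, \qquad \edge X = \coprod_{e\in\edge T} G_T/G_e, $$
where the action of $G_T$ is by left multiplication on each coset space. For an edge $e\in\edge T$ with endpoints $v, w\in\vert T$, the inclusions $G_e\hookrightarrow G_v$ and $G_e\hookrightarrow G_w$ induce $G_T$-equivariant maps $G_T/G_e\to G_T/G_v$ and $G_T/G_e\to G_T/G_w$, which I use to declare the incidences of the edge $gG_e$ with the vertices $gG_v$ and $gG_w$. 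The subgraph $T\subseteq X$ is recovered by identifying $v$ with the identity coset $G_v$ and $e$ with the identity coset $G_e$.

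Next I would verify (a) and (b). For (a), the orbits of the $G_T$-action on $\vert X$ (resp.\ $\edge X$) are by construction indexed by $\vert T$ (resp.\ $\edge T$), and each orbit meets $T$ in exactly one element, so $T$ is a set-theoretic fundamental domain. For (b), the stabilizer of the coset $gG_v$ is $gG_vg^{-1}$, and in particular the stabilizer of the vertex $v$ (the identity coset) is exactly $G_v$; the same holds for edges. This part is essentially formal.

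The main obstacle is showing that $X$ is a tree, i.e.\ connected and circuit-free. Connectedness follows from the fact that $G_T$ is generated by the union of the vertex groups $G_v$: any coset $gG_v$ can be joined to $G_v$ by successively applying generators, each step moving along an edge in $X$ because the generator lies in some vertex group whose corresponding vertex coset shares an edge with its translate. For the absence of circuits, I would proceed by induction on the tree $T$ (using the fact that $T$ itself has no circuits), reducing to the amalgamated product case $G_T = G_1*_A G_2$; here a closed reduced path in $X$ would give a nontrivial relation in $G_T$ contradicting the normal form theorem for amalgamated products, which in turn follows from Lemma \ref{lemma-bs-injective}. In the general (possibly infinite) tree case, one passes to the colimit over finite subtrees, noting that any hypothetical circuit in $X$ involves only finitely many orbits and hence lies inside the preimage of a finite subtree of $T$.

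Uniqueness of $X$ up to isomorphism follows because properties (a) and (b) force the bijection $\edge X/G_T \cong \edge T$ and $\vert X/G_T \cong \vert T$, and the $G_T$-equivariant identifications of each orbit with the appropriate coset space are then determined by the stabilizer condition.
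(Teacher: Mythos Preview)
The paper does not actually prove this theorem: it is quoted verbatim from Serre \cite[\S4.5, Theorem~9]{serre} and used as a black box. The only hint in the paper as to how the proof goes appears later, in the proof of Lemma~\ref{lemma-prop-serre-graph}, where the author writes that ``$\vert X$ (resp.\ $\edge X$) is the disjoint union of the $F\Gamma\cdot v\cong F\Gamma/G\Gamma_v$ \ldots'' --- exactly your coset construction. So your proposal is the standard Bass--Serre argument and agrees with what the paper (implicitly, via the citation and the later lemma) has in mind; there is nothing to compare beyond that.

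One small caveat on your sketch: the circuit-freeness argument is the only place with real content, and your reduction to the amalgamated-product case via an induction on finite subtrees of $T$ is correct in outline but hides the fact that one needs the vertex groups $G_v$ to inject into $G_T$ (Remark~\ref{remark-bs-t}(a)) in order for the coset description to make sense and for the normal-form argument to apply. Serre handles this simultaneously with the tree property; if you present it as you have, make sure the injectivity is available before you invoke it.
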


As a corollary of Theorem \ref{theorem-serre-tree} we immediately
obtain:

\rem \mylabel{remark-bs-t} Let $X$ and $G$ be as above.
\begin{itemize}
  \item[(a)] Each vertex group of $G$ is a subgroup of $\colim G$.
  \item[(b)] The stabilizers of the vertices and edges of $X$ are
  respectively the $\colim G$ conjugates of the vertex and edge
  groups of $G$.
  \item[(c)] If a subgroup $H$ of $\colim G$ stabilizes two
  vertices $v$ and $w$ in $X$ then it stabilizes the shortest path
  from $v$ to $w$ and therefore $H$ is contained in all the vertex
  and edge stabilizers of this path.
  \item[(d)] For any edge
  \begin{center}
  \begin{picture}(80,40)
  \put(0,23){
    \put(0,0){\circle{4}}
    \put(0,0){\put(0,-6){\makebox(0,0)[t]{$v$}}}
    \put(2,0){\line(1,0){56}}
    \put(30,0){\put(0,-4){\makebox(0,0)[t]{$e$}}}
    \put(60,0){\circle{4}}
    \put(60,0){\put(0,-6){\makebox(0,0)[t]{$w$}}}
  }
  \end{picture}
  \end{center}
  in $G$ we have $G_v\cap G_w=G_e$ in $\colim G$.
\end{itemize}

\begin{lemma}
\mylabel{lemma-bs-finite-subgroup}
  If $G$ is a tree of groups and $H\subseteq\colim G$ is a finite
  subgroup then $H$ is conjugate in $\colim G$ to a subgroup of some vertex
  group $G_v$.
\end{lemma}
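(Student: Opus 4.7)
The plan is to apply Theorem~\ref{theorem-serre-tree} to obtain the tree $X$ on which $\colim G$ acts, reduce the question to a classical fixed-point property of finite group actions on trees, and then read the conclusion off of Remark~\ref{remark-bs-t}(b). Specifically, if I can produce a vertex $v\in X$ fixed by $H$, then Remark~\ref{remark-bs-t}(b) identifies the stabilizer of $v$ in $\colim G$ as a conjugate of some vertex group $G_w$, and since $H$ fixes $v$ it lies inside that conjugate; this is exactly the statement to be proved.

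To reduce to a fixed-vertex claim, I would first observe that the action of $\colim G$ on $X$ is without inversion: by Remark~\ref{remark-bs-t}(b) the stabilizer in $\colim G$ of an edge $e$ of $T$ is exactly $G_e$, and $G_e$ fixes both endpoints of $e$ (it sits inside each of the two incident vertex groups by the convention introduced in Section~2); by conjugation this propagates to every edge of $X$. Hence $H$ acts without inversion on the tree $X$. Now I would prove the standard fact that such an action has a fixed vertex by a leaf-pruning argument: pick any $v_0\in X$, let $Y$ be the (finite) subtree spanned by the orbit $Hv_0$, and iteratively delete the leaves of $Y$. At each stage the set of leaves is $H$-invariant, so the remaining subtree is also $H$-invariant; after finitely many steps one is left with either a single vertex, which must then be $H$-fixed, or a single edge, whose two endpoints are forced to be $H$-fixed by the absence of inversion.

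The step I expect to require the most attention is the verification that the action is without inversion, since the whole leaf-pruning argument would collapse if some element of $H$ were allowed to swap the two endpoints of an edge; once no-inversion is secured, the rest is routine and Remark~\ref{remark-bs-t}(b) finishes the job. As an alternative to writing out the pruning argument, one could simply cite Serre's fixed-point theorem for finite groups acting without inversion on trees (\cite[\S6.5]{serre}) at this point.
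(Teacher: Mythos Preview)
Your argument is correct and is exactly the standard one. Note that the paper itself supplies no proof for this lemma: it is stated as a known consequence of Bass--Serre theory (and indeed appears in \cite{serre}). Your write-up therefore fills in what the paper leaves implicit, and does so along the expected route: pass to the Bass--Serre tree $X$ of Theorem~\ref{theorem-serre-tree}, observe that the action is without inversion (this is in fact built into Serre's construction of $X$, so your verification via edge stabilizers is more than is strictly needed), invoke the fixed-point property for finite groups acting on trees, and finish with Remark~\ref{remark-bs-t}(b). One cosmetic remark: in the leaf-pruning endgame it is cleaner to say that the process terminates at the \emph{center} of the finite tree $Y$, which is either a vertex or an edge; your handling of the edge case via no-inversion is then exactly right.
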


\comment{
\begin{lemma}
\mylabel{lemma-bs-normalizer}
  In the setting of Lemma \ref{lemma-bs-injective}, if
  $H_i=N_{G_i}(H_i)$ for $i=1,2$ then $h(H_1*_BH_2)$ is equal to
  its normalizer in $G_1*_BG_2$.
\end{lemma}
}

\section{Construction of the functor $F$}
\mylabel{section-construction}
We start with the following graph of groups, where some edge to
vertex incidences are labelled with $c$:

\begin{center}
\begin{equation}
\mylabeln{equation-circle}
\begin{picture}(290,190)
\put(0,30){
  \put(0,0){\circle{4}}
  \put(0,0){\put(0,-16){\makebox(0,0)[b]{$M$}}}
  \put(0,0){\put(40,30){\makebox(0,8)[r]{$N$}}}
  \put(2,1.5){\line(4,3){80}}

  \put(84,63){\circle{4}}
  \put(84,63){\put(-5,8){\makebox(0,0)[r]{$P_0$}}}
  \put(76,53){\makebox(0,0)[t]{$c$}}
  \put(84,63){\put(56,-39){\makebox(-8,-6)[r]{$N_0$}}}
  \put(86,61.5){\line(4,-3){80}}
  \put(84,63){\put(44,40){\makebox(0,0)[r]{$N_4$}}}
  \put(86,64.5){\line(4,3){80}}

  \put(168,0){\circle{4}}
  \put(168,0){\put(0,-20){\makebox(0,0)[b]{$P_1$}}}
  \put(160,10){\makebox(0,0)[b]{$c$}}
  \put(168,0){\put(50,5){\makebox(0,0)[t]{$N_1$}}}
  \put(170,1){\line(6,1){99}}

  \put(168,126){\circle{4}}
  \put(168,126){\put(0,8){\makebox(0,0)[b]{$P_4$}}}
  \put(160,116){\makebox(0,0)[t]{$c$}}
  \put(168,126){\put(50,-4){\makebox(0,0)[b]{$N_3$}}}
  \put(170,125){\line(6,-1){99}}

  \put(271,18){\circle{4}}
  \put(271,18){\put(0,-10){\makebox(20,0){$P_2$}}}
  \put(257,19){\makebox(0,0)[b]{$c$}}
  \put(271,108){\circle{4}}
  \put(271,108){\put(0,10){\makebox(20,0){$P_3$}}}
  \put(257,107){\makebox(0,0)[t]{$c$}}
  \put(271,64){\put(4,0){\makebox(0,0)[l]{$N_2$}}}
  \put(271,20){\line(0,1){86}}
}
\end{picture}
\end{equation}
\end{center}

\def\condsout{C1}
\def\condmtop{C2}
\def\condclosed{C3}
\def\condtwoedges{C4}
\def\condedgetrivial{C5}
\def\condverttrivial{C6}
\def\condfold{C7}
\def\condfactor{C8}

We assume the following conditions:
\begin{itemize}
  \item[\condsout] $M$ is finite, centerless and any homomorphism
  $f:M\to M$ is either trivial or an inner automorphism.
  \item[\condmtop] $M$ admits no nontrivial homomorphisms to $P_i$ for
  $i=0,1,\ldots,4$.
  \item[\condclosed] If an inclusion $A\subseteq B$ in
  \eqref{equation-circle} is labelled $c$
  and $f:B\to B$ is a homomorphism which is the identity on $A$
  then $f$ is the identity.
  \item[\condtwoedges] If $A_1$ and $A_2$ are edge groups ($A_1\neq N_2$) adjacent to
  the common vertex group $B$ then $A_1$ is not conjugate in $B$
  to a subgroup of $A_2$. If $A_1=A_2$ we require that
  $N_B(A_1)=A_1$.
  \item[\condedgetrivial] $N_1\cap N_2$ and $N_2\cap N_3$ are trivial.
  \item[\condverttrivial] $N_1\cap N_0$ and $N_3\cap N_4$ are trivial.
  \item[\condfold] If $A\supseteq C \subseteq B$ is an edge in
  \eqref{equation-circle} and $C\subseteq B$ is labelled $c$ then no homomorphism
  $f:B\to A$ is the identity on $C$.
  \item[\condfactor] If an inclusion $A\subseteq B$ in
  \eqref{equation-circle} is labelled $c$ and $K\subseteq B$ is a
  normal subgroup which contains $A$ then $K=B$.
\end{itemize}

\begin{lemma}
\mylabel{lemma-exists}
  There exists a graph of groups \eqref{equation-circle} satisfying
  conditions C1--C8.
\end{lemma}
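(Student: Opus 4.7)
The plan is to construct all six vertex groups and six edge groups explicitly, using a combination of finite group theory and combinatorial group theory (small cancellation, or HNN extensions with strong malnormality).

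For the central vertex group $M$, I would take a finite non-abelian simple group with trivial outer automorphism group --- for instance the Mathieu group $M_{11}$. Then any endomorphism of $M$ is either trivial (kernel equal to $M$) or injective; if injective, it is an automorphism by finiteness, hence inner by triviality of $\mathrm{Out}(M)$, giving C1. The edge group $N\subsetneq M$ on the $M$--$P_0$ edge is chosen as a proper subgroup of $M$; arranging $P_0$ so that its finite subgroups are all conjugate to subgroups of $N$ (and the remaining $P_i$ torsion-free for $i=1,\dots,4$) then gives C2, since any nontrivial homomorphism from the simple group $M$ would be injective and thus exhibit $M$ itself as a finite subgroup of some $P_i$, contradicting the arranged torsion structure.

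For $P_0,\dots,P_4$, I would build each as a small cancellation quotient of the form $(A*L_i)/R_i$, where $A$ is the $c$-labeled edge group in $P_i$ (together with the $N$-torsion data in the case $i=0$), $L_i$ is an auxiliary free factor, and $R_i$ is a set of relators satisfying $C'(1/6)$ with letters drawn from both $A$ and $L_i$. The set $R_i$ must be designed so that $A$ normally generates $P_i$ (yielding C8), any endomorphism of $P_i$ fixing $A$ pointwise is the identity (C3, by small cancellation rigidity), no homomorphism from $P_i$ to the opposite vertex along the $c$-labeled edge fixes $A$ pointwise (C7, by a length/area argument on the relators), and the various edge subgroups at each vertex are malnormal and pairwise non-conjugate (C4), with pairwise trivial intersection (C5, C6).

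The main obstacle I anticipate is the simultaneous satisfaction of the rigidity conditions (C3, C7, C8) and the separation conditions (C4, C5, C6). C8 forces the $c$-labeled edge group $A$ to be ``large'' in $P_i$ in the normal-closure sense, while C4 requires $A$ to be algebraically well-separated from the other edge subgroups sharing $P_i$ as a vertex. Reconciling these requires delicate choice of the relator set $R_i$: dense enough to force normal generation and to rule out retractions onto $A$, yet sparse enough to preserve the genericity and malnormality of the other edge embeddings. Classical small cancellation theory (in the vein of Ol'shanskii) provides the right machinery, and the construction can be carried out locally, one vertex at a time around the pentagon $P_0{-}P_1{-}P_2{-}P_3{-}P_4{-}P_0$, with the distinguished vertex $M$ handled separately using only the rigidity of the chosen finite simple group.
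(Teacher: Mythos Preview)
Your approach is genuinely different from the paper's and substantially harder. The paper gives an entirely finite, explicit construction: $M=M_{23}$, $N=\mathbb{Z}_{11}\rtimes\mathbb{Z}_5$ (the normalizer of a Sylow $11$-subgroup in $M_{23}$), each $P_i$ is $A_{11}\oplus A_{12}$, $A_{12}\oplus A_{11}$, or just $A_{12}$, and the edge groups $N_i$ are built from $N$, $A_{11}$, $\mathbb{Z}_{12}$, and subgroups of the form $(S_p\oplus S_q)\cap A_{12}$ inside $A_{11}$. Conditions C1--C8 then reduce to standard facts about $M_{23}$ (trivial outer automorphism group, an element of order $23$, order not divisible by $25$) and about symmetric and alternating groups (all automorphisms of $A_n$ come from $S_n$, maximality of $A_{11}$ in $A_{12}$, etc.). No infinite groups, no small cancellation, no malnormality arguments.

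Your plan is plausible in outline but leaves real work undone. The claim that a $C'(1/6)$ quotient $P_i$ has no endomorphism fixing $A$ pointwise other than the identity is not a standard consequence of small cancellation; $C'(1/6)$ by itself does not give this kind of relative co-Hopficity, and you would need a targeted argument (closer in spirit to Ol'shanskii-type constructions than to classical small cancellation). The ``length/area'' justification for C7 is similarly underspecified, and you have not explained how the edge groups are chosen compatibly between adjacent vertices when you build the $P_i$ one at a time. More seriously for the paper as a whole: the lemmas that make $F$ ``almost full'' use that all vertex groups of the template are \emph{finite}, so that every finite subgroup of $F\Gamma$ stabilizes a vertex of the associated Bass--Serre graph and stabilizer subgroups along paths are themselves finite. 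With infinite $P_i$ those steps fail, so even a correct version of your construction would not slot into the rest of the argument without reworking it.
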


\begin{proof}
  We have:
  \begin{center}
  \begin{picture}(290,180)
  \put(0,30){
    \put(0,0){\circle{4}}
    \put(0,0){\put(0,-16){\makebox(0,0)[b]{$M_{23}$}}}
    \put(0,0){\put(40,30){\makebox(0,8)[r]{$N$}}}
    \put(2,1.5){\line(4,3){80}}

    \put(84,63){\circle{4}}
    \put(84,63){\put(-5,8){\makebox(0,0)[r]{$A_{12}\oplus A_{11}$}}}
    \put(84,63){\put(51,-39){\makebox(0,-8)[r]{$N\oplus A(S_3\oplus S_8)$}}}
    \put(86,61.5){\line(4,-3){80}}
    \put(84,63){\put(40,33){\makebox(0,8)[r]{$N\oplus A(S_4\oplus S_7)$}}}
    \put(86,64.5){\line(4,3){80}}

    \put(168,0){\circle{4}}
    \put(168,0){\put(0,-16){\makebox(0,0)[b]{$A_{11}\oplus A_{12}$}}}
    \put(168,0){\put(50,14){\makebox(0,0)[b]{$A_{11}$}}}
    \put(170,1){\line(6,1){99}}

    \put(168,126){\circle{4}}
    \put(168,126){\put(0,16){\makebox(0,0)[t]{$A_{11}\oplus A_{12}$}}}
    \put(168,126){\put(50,-14){\makebox(0,0)[t]{$A_{11}$}}}
    \put(170,125){\line(6,-1){99}}

    \put(271,18){\circle{4}}
    \put(271,18){\put(0,-16){\makebox(20,0){$A_{12}$}}}
    \put(271,108){\circle{4}}
    \put(271,108){\put(0,16){\makebox(20,0){$A_{12}$}}}
    \put(271,64){\put(4,0){\makebox(0,0)[l]{$\mathbb{Z}_{12}$}}}
    \put(271,20){\line(0,1){86}}
  }
  \end{picture}
  \end{center}
  Here $M_{23}$ is the Mathieu simple group,
  $N\cong\mathbb{Z}_{11}\rtimes\mathbb{Z}_5$ is the normalizer of
  the Sylow $11$-subgroup in $M_{23}$ \cite[page 265]{gorenstein},
  $S_n$ and $A_n$ denote the $n$-th symmetric and the $n$-th alternating groups.
  $A(S_p\oplus S_q)$ is the intersection of $S_p\oplus S_q$ and $A_{12}$ in $S_{12}$.
  The inclusions are as follows:
  \begin{enumerate}
    \item  $N\subseteq A_{12}\oplus A_{11}$ is determined by any inclusions
      $N\subseteq A_{12}$ and $N\subseteq A_{11}$.
    \item  $N\oplus A(S_p\oplus S_q)\subseteq A_{12}\oplus A_{11}$ equals
      $(N\subseteq A_{12})\oplus(\mbox{natural inclusion }A(S_p\oplus S_q)
      \subseteq A_{11})$.
    \item  $N\oplus A(S_p\oplus S_q)\subseteq A_{11}\oplus A_{12}$ equals
      $(N\subseteq A_{11})\oplus(A(S_p\oplus S_q) \subseteq A_{12})$.
    \item  $A_{11}\subseteq A_{12}$ is the inclusion of a maximal subgroup.
    \item  $A_{11}\subseteq A_{11}\oplus A_{12}$ is determined by $\id_{A_{11}}$ and $A_{11}\subseteq A_{12}$.
    \item  $\mathbb{Z}_{12}\subseteq A_{12}$ is the inclusion of a transitive subgroup.
  \end{enumerate}
  We know \cite[page 265]{gorenstein} that $M_{23}$ has no outer
  automorphisms and has an element of order $23$.
  The order of $M_{23}$ is not divisible by $25$. Also all the
  automorphisms of $A_{11}$ and $A_{12}$ come from $S_{11}$
  and $S_{12}$. This and well known
  properties of symmetric groups make it straightforward to
  verify that all the conditions C1--C8 are satisfied.
\end{proof}

\intertitle{The construction of $G\Gamma$ and $F\Gamma$.}

Let $\Gamma$ be an m-graph. We construct a u-graph $A\Gamma$ as
follows. Replace each vertex $v$ in $\Gamma$ with a vertex $P_{0,v}$,
add a new vertex $M$, connect $M$ to every $P_{0,v}$ with an edge
$N_v$, and finally replace every subgraph
\begin{center}
\begin{picture}(80,40)
\put(0,23){
  \put(0,0){\circle{4}}
  \put(0,0){\put(0,-6){\makebox(0,0)[t]{$P_{0,v}$}}}
  \put(2,0){\vector(1,0){56}}
  \put(30,0){\put(0,-4){\makebox(0,0)[t]{$e$}}}
  \put(60,0){\circle{4}}
  \put(60,0){\put(0,-6){\makebox(0,0)[t]{$P_{0,w}$}}}
}
\end{picture}
\end{center}
where $e\in\Gamma$ with a subgraph
\begin{center}
\begin{equation}
\mylabeln{equation-edge}
\begin{picture}(300,30)
\put(0,30){
  \put(0,0){\circle{4}}
  \put(0,0){\put(0,-6){\makebox(0,0)[t]{$P_{0,v}$}}}
  \put(2,0){\line(1,0){56}}
  \put(30,0){\put(0,-4){\makebox(0,0)[t]{$N_{0,e}$}}}
  \put(60,0){\circle{4}}
  \put(60,0){\put(0,-6){\makebox(0,0)[t]{$P_{1,e}$}}}
  \put(62,0){\line(1,0){56}}
  \put(90,0){\put(0,-4){\makebox(0,0)[t]{$N_{1,e}$}}}
  \put(120,0){\circle{4}}
  \put(120,0){\put(0,-6){\makebox(0,0)[t]{$P_{2,e}$}}}
  \put(122,0){\line(1,0){56}}
  \put(150,0){\put(0,-4){\makebox(0,0)[t]{$N_{2,e}$}}}
  \put(180,0){\circle{4}}
  \put(180,0){\put(0,-6){\makebox(0,0)[t]{$P_{3,e}$}}}
  \put(182,0){\line(1,0){56}}
  \put(210,0){\put(0,-4){\makebox(0,0)[t]{$N_{3,e}$}}}
  \put(240,0){\circle{4}}
  \put(240,0){\put(0,-6){\makebox(0,0)[t]{$P_{4,e}$}}}
  \put(242,0){\line(1,0){56}}
  \put(270,0){\put(0,-4){\makebox(0,0)[t]{$N_{4,e}$}}}
  \put(300,0){\circle{4}}
  \put(300,0){\put(0,-6){\makebox(0,0)[t]{$P_{0,w}$}}}
}
\end{picture}
\end{equation}
\end{center}
We say that $M$, $N$, $N_i$, $P_i$ for $i=0,1,\ldots,4$ are {\em
types} of objects $M$, $N_a$, $N_{i,a}$, $P_{i,a}$ for
$i=0,1,\ldots,4$ and $a$ in $\vert\Gamma$ or $\edge\Gamma$,
respectively. We see that the resulting functor $A$ preserves
colimits of connected diagrams.

We construct a graph of groups $G\Gamma$ by taking $A\Gamma$
as the underlying u-graph
and sending each object $P$ of $A\Gamma$ to a group isomorphic
to the group in \eqref{equation-circle} labelled with the
type of $P$. We send morphisms
in $A\Gamma$ to the corresponding inclusions in
\eqref{equation-circle}. We label $c$ those inclusions in
$G\Gamma$ which correspond to similarly labelled inclusions in
\eqref{equation-circle}. The isomorphisms between the groups in
$G\Gamma$ and the groups in \eqref{equation-circle}, their
inverses and compositions are referred to as {\em standard
isomorphisms}. If $f:\Gamma\to\Gamma'$ is a
morphism of m-graphs then we define $Gf:G\Gamma\to G\Gamma'$
in the obvious way using standard isomorphisms.
We see that the resulting functor $G$, from m-graphs to graphs
of groups, preserves colimits of connected diagrams.

We define
$$F\Gamma=\colim G\Gamma,$$
in particular $F\emptyset=M$. We obtain
$Ff:F\Gamma\to F\Gamma'$ as the colimit homomorphism.

\rem\mylabel{remark-f-colimits} Since colimits commute we see that
$F$ also preserves colimits of connected diagrams.

\section{Properties of the functor $F$}

In order to apply Bass-Serre theory we need to construct $F\Gamma$
using colimits of trees of groups rather than colimits of general graphs
of groups. Let
$G_1\Gamma$ be the subgraph of groups of $G\Gamma$ consisting
of the vertices of types $M$, $P_0$, $P_1$, $P_4$ and the edges of
types $N$, $N_0$, $N_4$. Let $G_2\Gamma$ be the subgraph of
$G\Gamma$ consisting of the vertices of types $P_2$, $P_3$ and
the edges of type $N_2$. Without changing the colimit, we can make
$G_2\Gamma$ a tree of groups by adding a trivial vertex group
and connecting it to every vertex group of type $P_2$ with a
trivial edge group. Let $G_0\Gamma$ be the subdiagram of
$G\Gamma$ consisting of the edges of type $N_1$ and $N_3$. Then
$G\Gamma$ is the colimit, in the category of diagrams, of the
following:
$$G_1\Gamma\leftarrow G_0\Gamma\rightarrow G_2\Gamma.$$
Let $F_i\Gamma=\colim G_i\Gamma$ for $i=1,2,3$.
Since colimits commute, we see that $F\Gamma$ is the colimit of
$$
F_1\Gamma\leftarrow F_0\Gamma\rightarrow F_2\Gamma.
$$

It is clear that
$$F_0\Gamma=\Ast_{e\in\edge\Gamma}(N_{1,e}*N_{3,e})$$
and
$$F_2\Gamma=\Ast_{e\in\edge\Gamma}(P_{2,e}*_{N_{2,e}}P_{3,e}).$$

\begin{lemma}
\mylabel{lemma-prop-f-injective}
  The homomorphisms $F_0\Gamma\to F_i\Gamma$ for $i=1,2$ are
  injective.
\end{lemma}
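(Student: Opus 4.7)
I would prove the two injectivity claims separately, reducing each to Lemma~\ref{lemma-bs-free-product} via the triviality conditions \condedgetrivial\ and \condverttrivial.

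For $i=2$, under $G_0\Gamma\to G_2\Gamma$ each $N_{1,e}$ lands in $P_{2,e}$ and each $N_{3,e}$ in $P_{3,e}$. Fix $e\in\edge\Gamma$ and apply Lemma~\ref{lemma-bs-injective} to the amalgam $P_{2,e}*_{N_{2,e}}P_{3,e}$ with $H_1=N_{1,e}$, $H_2=N_{3,e}$, $A=N_{2,e}$ and $B=1$; the hypothesis $H_1\cap A=H_2\cap A=1$ is precisely condition \condedgetrivial. This yields $N_{1,e}*N_{3,e}\hookrightarrow P_{2,e}*_{N_{2,e}}P_{3,e}$. Taking free products of these injections over $e$ --- equivalently, applying Lemma~\ref{lemma-bs-free-product} to the (extended) tree of groups defining $F_2\Gamma$, with trivial centre, outer vertex groups $P_{2,e}*_{N_{2,e}}P_{3,e}$, trivial edge groups, and subgroups $N_{1,e}*N_{3,e}$ --- produces the desired embedding $F_0\Gamma\hookrightarrow F_2\Gamma$.

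For $i=1$ I would apply Lemma~\ref{lemma-bs-free-product} twice, exploiting the two-level star structure of $G_1\Gamma$. For each $v\in\vert\Gamma$ let $S_v$ be the subtree of groups of $G_1\Gamma$ consisting of the central vertex group $P_{0,v}$, the vertex groups $P_{1,e}$ for edges $e$ whose initial vertex is $v$ (attached via $N_{0,e}$), and the vertex groups $P_{4,e'}$ for edges $e'$ whose terminal vertex is $v$ (attached via $N_{4,e'}$). Condition \condverttrivial\ gives $N_{1,e}\cap N_{0,e}=1$ and $N_{3,e'}\cap N_{4,e'}=1$, so a first application of Lemma~\ref{lemma-bs-free-product} centred at $P_{0,v}$ embeds
$$H_v \;:=\; \Ast_{\iota_e=v} N_{1,e}\;\;\Ast\;\;\Ast_{\tau_{e'}=v} N_{3,e'} \;\hookrightarrow\; \colim S_v$$
with $H_v\cap P_{0,v}=1$. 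Now regard $G_1\Gamma$ itself as a coarser star with centre $M$, outer vertex groups $\colim S_v$ and edge groups $N_v$; the edge-to-vertex map factors as $N_v\hookrightarrow P_{0,v}\hookrightarrow\colim S_v$, the second arrow being a monomorphism by Remark~\ref{remark-bs-t}(a). Since $H_v\cap P_{0,v}=1$ forces $H_v\cap N_v=1$, a second application of Lemma~\ref{lemma-bs-free-product} centred at $M$ embeds $\Ast_{v\in\vert\Gamma}H_v$ into $\colim G_1\Gamma=F_1\Gamma$; and the identification $\Ast_v H_v=\Ast_{e\in\edge\Gamma}(N_{1,e}*N_{3,e})=F_0\Gamma$ closes the argument.

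The delicate point I would spell out carefully is the legitimacy of the coarser star at $M$: that its outer vertex groups $\colim S_v$ form legitimate graph-of-groups data (requiring $N_v\hookrightarrow\colim S_v$ to be monic, which was just established), and that the colimit of this coarser star coincides with $\colim G_1\Gamma=F_1\Gamma$. Both follow from commutativity of colimits together with Bass-Serre theory applied to each $S_v$; once these preliminaries are in place, the two-stage application of Lemma~\ref{lemma-bs-free-product} described above delivers the claimed injectivity.
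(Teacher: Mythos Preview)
Your proposal is correct and is essentially a careful expansion of the paper's own proof, which consists of the single sentence ``This is a consequence of Conditions \condverttrivial\ and \condedgetrivial\ and Lemma~\ref{lemma-bs-free-product}.'' Your two-stage application of Lemma~\ref{lemma-bs-free-product} to the two-level star structures of $G_1\Gamma$ and $G_2\Gamma$ (using Lemma~\ref{lemma-bs-injective} in the $i=2$ step, which is equivalent to an application of Lemma~\ref{lemma-bs-free-product} with centre $N_{2,e}$) is exactly what that sentence abbreviates.
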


\begin{proof}
  This is a consequence of Conditions {\condverttrivial} and
  {\condedgetrivial} and Lemma \ref{lemma-bs-free-product}.
\end{proof}

\begin{lemma}
\mylabel{lemma-prop-vertex-subgroups}
  The vertex groups of $G\Gamma$ map injectively into
  $F\Gamma$.
\end{lemma}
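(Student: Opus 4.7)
The plan is to exploit the pushout decomposition $F\Gamma = \colim\bigl(F_1\Gamma \leftarrow F_0\Gamma \rightarrow F_2\Gamma\bigr)$ set up at the start of this section, together with Bass-Serre theory for the two pieces $G_1\Gamma$ and $G_2\Gamma$.

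First I would partition the vertex groups of $G\Gamma$ according to their type. Those of type $M$, $P_0$, $P_1$ or $P_4$ are precisely the vertex groups of $G_1\Gamma$; those of type $P_2$ or $P_3$ are the vertex groups of $G_2\Gamma$. It therefore suffices to show (a) the vertex groups of $G_i\Gamma$ embed in $F_i\Gamma$ for $i = 1, 2$, and (b) the canonical maps $F_i\Gamma \to F\Gamma$ are injective.

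For (a), I would verify that the underlying u-graph of $G_1\Gamma$ is a tree (the case of $G_2\Gamma$ is already a tree after the trivial-vertex modification made above). Concretely: every closed path in $A\Gamma$ must pass through some edge of type $N_2$, since the only cycles arise by running along two parallel $P_0 \mbox{-} P_1 \mbox{-} P_2 \mbox{-} P_3 \mbox{-} P_4 \mbox{-} P_0$ paths or by going around via $M$, both of which necessarily use an $N_2$-edge or exit through the $P_2, P_3$ block. Removing $P_{2,e}, P_{3,e}$ and the incident $N_1, N_2, N_3$ edges leaves $G_1\Gamma$ with the star structure one expects: a root $M$ joined by $N$-edges to each $P_{0,v}$, with leaves $P_{1,e}$ (for each edge $e$ initial at $v$) and $P_{4,e}$ (for each edge $e$ terminal at $v$) hanging off the corresponding $P_{0,v}$. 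With $G_1\Gamma$ and $G_2\Gamma$ now trees of groups, Remark \ref{remark-bs-t}(a) (a direct corollary of Theorem \ref{theorem-serre-tree}) delivers the desired embeddings of all vertex groups into $F_1\Gamma$ and $F_2\Gamma$.

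For (b), $F\Gamma$ is by construction the amalgamated free product $F_1\Gamma *_{F_0\Gamma} F_2\Gamma$, and the two maps $F_0\Gamma \to F_i\Gamma$ are injective by Lemma \ref{lemma-prop-f-injective}. Applying Lemma \ref{lemma-bs-injective} with $A = B = F_0\Gamma$ and $H_i = G_i = F_i\Gamma$ shows that $F_1\Gamma$ and $F_2\Gamma$ both embed in $F\Gamma$, and composing with (a) finishes the proof. The only real obstacle is the combinatorial verification that $G_1\Gamma$ is a tree of groups; notice that none of the fine group-theoretic conditions C1--C8 on diagram \eqref{equation-circle} are invoked here, only the injectivity input already packaged in Lemma \ref{lemma-prop-f-injective}.
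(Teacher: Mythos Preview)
Your argument is correct and is precisely the content the paper compresses into its one-line proof: Remark \ref{remark-bs-t}(a) applied to the trees $G_1\Gamma$ and $G_2\Gamma$, together with Lemma \ref{lemma-prop-f-injective} and the amalgam $F\Gamma=F_1\Gamma*_{F_0\Gamma}F_2\Gamma$. One small correction: in step~(b) your substitution $H_i=G_i=F_i\Gamma$, $A=B=F_0\Gamma$ into Lemma \ref{lemma-bs-injective} literally yields only that the identity on $F\Gamma$ is injective; the clean way to get $F_i\Gamma\hookrightarrow F\Gamma$ is to apply Remark \ref{remark-bs-t}(a) once more, this time to the two-edge tree of groups $F_1\Gamma\leftarrow F_0\Gamma\rightarrow F_2\Gamma$, which is a genuine tree of groups exactly because Lemma \ref{lemma-prop-f-injective} supplies the required injectivity of the edge maps.
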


\begin{proof}
  This follows from Remark \ref{remark-bs-t}(a) and the construction
  of $F\Gamma$ by means of colimits of trees, including Lemma
  \ref{lemma-prop-f-injective}.
\end{proof}

We need an analogue of Theorem \ref{theorem-serre-tree}:

\begin{lemma}
\mylabel{lemma-prop-serre-graph}
  Let $\Gamma$ be an m-graph and $A\Gamma$ be the underlying u-graph of
  $G\Gamma$. There exists a u-graph $X$ and an action of
  $F\Gamma$ on $X$ which is characterized (up to isomorphism) by
  the following properties:
  \begin{itemize}
    \item[(a)] $A\Gamma$ is the fundamental domain for $X$ mod $F\Gamma$ and
    \item[(b)] for any $v$ in $\vert A\Gamma$ (resp. $e$ in $\edge A\Gamma$)
    the stabilizer of $v$ (resp. $e$) in $F\Gamma$ is $G\Gamma_v$ (resp. $G\Gamma_e$).
  \end{itemize}
\end{lemma}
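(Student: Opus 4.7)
The plan is to construct $X$ directly as a coset u-graph, generalizing the Bass--Serre construction that underlies Theorem \ref{theorem-serre-tree}. The crucial input is Lemma \ref{lemma-prop-vertex-subgroups}: each vertex group $G\Gamma_v$ injects into $F\Gamma$, and by the convention on graphs of groups each edge group $G\Gamma_e$ is contained in the two adjacent vertex groups and hence embeds as well. I will identify every $G\Gamma_v$ and $G\Gamma_e$ with its image in $F\Gamma$.

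With these identifications in hand I will set
\[
\vert X \;=\; \bigsqcup_{v\in\vert A\Gamma} F\Gamma/G\Gamma_v, \qquad \edge X \;=\; \bigsqcup_{e\in\edge A\Gamma} F\Gamma/G\Gamma_e,
\]
with $F\Gamma$ acting on each coset space by left translation. Whenever $e$ is an edge of $A\Gamma$ joining $v$ to $w$ and $g\in F\Gamma$, the edge of $X$ represented by $gG\Gamma_e$ will be declared incident to the vertices $gG\Gamma_v$ and $gG\Gamma_w$; this is well defined, and $F\Gamma$-equivariant, precisely because $G\Gamma_e\subseteq G\Gamma_v\cap G\Gamma_w$ inside $F\Gamma$.

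To finish I will verify the two stated properties. The assignment sending each $v\in\vert A\Gamma$ (resp.\ $e\in\edge A\Gamma$) to the trivial coset $G\Gamma_v$ (resp.\ $G\Gamma_e$) embeds $A\Gamma$ into $X$ as a subgraph, and every coset lies in the $F\Gamma$-orbit of such a trivial one, so this embedded copy is a fundamental domain, giving (a). The stabilizer of the trivial coset $G\Gamma_v$ under left multiplication on $F\Gamma/G\Gamma_v$ is $G\Gamma_v$ itself, and likewise for the edge cosets, giving (b). Uniqueness up to $F\Gamma$-equivariant isomorphism will be formal: any $F\Gamma$-equivariant u-graph with fundamental domain $A\Gamma$ and the prescribed stabilizers must decompose as a disjoint union of orbits $F\Gamma/G\Gamma_v$ and $F\Gamma/G\Gamma_e$, with incidence forced by that in $A\Gamma$. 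The only point that needs any attention is checking that distinct objects of $A\Gamma$ really sit in different coset orbits and that the trivial cosets respect incidence; both fall out of the disjoint-union construction and of Lemma \ref{lemma-prop-vertex-subgroups}, so I do not anticipate a deeper obstacle than carefully unwinding definitions.
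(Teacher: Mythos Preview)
Your proposal is correct and follows essentially the same route as the paper: the paper also invokes Lemma~\ref{lemma-prop-vertex-subgroups} to embed the vertex and edge groups into $F\Gamma$, then builds $X$ as the disjoint union of coset spaces $F\Gamma/G\Gamma_v$ and $F\Gamma/G\Gamma_e$ with incidence defined via the inclusions $G\Gamma_e\subseteq G\Gamma_{\text{target of }\iota_e}$ and $G\Gamma_e\subseteq G\Gamma_{\text{target of }\tau_e}$, noting that properties (a) and (b) are then immediate. Your write-up is slightly more explicit about the fundamental-domain embedding and the uniqueness argument, but there is no substantive difference.
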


\begin{proof}
  The proof is similar to the proof of \cite[\S4.5, Theorem
  9]{serre}: Since we know from Lemma \ref{lemma-prop-vertex-subgroups}
  that the vertex groups $G\Gamma_v$ embed into the colimit group $F\Gamma$,
  it is clear that $\vert X$ (resp. $\edge X$) is the
  disjoint union of the $F\Gamma\cdot v\cong
  F\Gamma/G\Gamma_v$ for $v\in\vert A\Gamma$ (resp. the
  $F\Gamma\cdot e\cong F\Gamma/G\Gamma_e$ for $e\in\edge
  A\Gamma$). The nonidentity morphisms are defined by means of the
  inclusions
  $G\Gamma_e\subseteq G\Gamma_{\text{target of }\iota_e}$ and
  $G\Gamma_e\subseteq G\Gamma_{\text{target of
  }\tau_e}$. This defines a graph on which the group $F\Gamma$
  acts (on the left) in the obvious way, and all the assertions of
  the lemma are immediate.
\end{proof}

\rem \mylabel{remark-prop-serre-graph-stabilizers} A subgroup of
$F\Gamma$ stabilizes a vertex or an edge of $X$ if and only if it
is conjugate in $F\Gamma$ to a subgroup of a vertex group or an edge
group of $G\Gamma$.

\begin{lemma}
\mylabel{lemma-prop-finite-x-stabilizer}
  If $H\subseteq F\Gamma$ is a finite subgroup then it
  stabilizes a vertex of $X$.
\end{lemma}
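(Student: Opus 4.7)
The plan is to apply Lemma \ref{lemma-bs-finite-subgroup} twice, first to the ``outer'' amalgam decomposition of $F\Gamma$ and then to the trees of groups $G_1\Gamma$ and $G_2\Gamma$ that make up $F_1\Gamma$ and $F_2\Gamma$. The target is to show that $H$ is conjugate in $F\Gamma$ to a subgroup of some vertex group of $G\Gamma$; by Remark \ref{remark-prop-serre-graph-stabilizers} this is equivalent to $H$ stabilizing a vertex of $X$.

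First, I would observe that, thanks to Lemma \ref{lemma-prop-f-injective}, the decomposition
\[
F\Gamma \;=\; F_1\Gamma \ast_{F_0\Gamma} F_2\Gamma
\]
is an amalgamated product, i.e.\ the colimit of a (very small) tree of groups with two vertex groups $F_1\Gamma$, $F_2\Gamma$ joined by the edge group $F_0\Gamma$. Lemma \ref{lemma-bs-finite-subgroup} then implies that the finite subgroup $H$ is conjugate in $F\Gamma$ to a subgroup of either $F_1\Gamma$ or $F_2\Gamma$; call the resulting conjugate $H'$ and assume, say, $H' \subseteq F_1\Gamma$ (the other case is symmetric).

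Next, $F_1\Gamma$ is the colimit of $G_1\Gamma$, and inspection of the construction shows that $G_1\Gamma$ is a tree of groups: it is the star formed by the central vertex $M$, the radial edges $N_v$ to each $P_{0,v}$, together with two leaves $P_{1,e}$ and $P_{4,e}$ hanging off each $P_{0,v}$ (and $P_{0,w}$) via $N_{0,e}$, $N_{4,e}$. Similarly $G_2\Gamma$, after the harmless addition of the trivial vertex group connecting the components, is a tree of groups whose nontrivial vertex groups are precisely the $P_{2,e}$ and $P_{3,e}$. Another application of Lemma \ref{lemma-bs-finite-subgroup} (inside $F_1\Gamma$, or $F_2\Gamma$) shows that $H'$ is conjugate in $F_1\Gamma$ (resp.\ $F_2\Gamma$) to a subgroup of a vertex group of $G_1\Gamma$ (resp.\ $G_2\Gamma$).

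Every such vertex group is a vertex group of $G\Gamma$ itself (the auxiliary trivial vertex added to $G_2\Gamma$ contributes only the trivial subgroup, which is visibly a subgroup of any vertex group of $G\Gamma$). Composing the two conjugations gives an element $g \in F\Gamma$ with $gHg^{-1} \subseteq G\Gamma_v$ for some $v \in \vert A\Gamma$; Remark \ref{remark-prop-serre-graph-stabilizers} then yields a vertex of $X$ stabilized by $H$. The only delicate point is to check that Lemma \ref{lemma-bs-finite-subgroup} is legitimately applicable at each stage, which reduces to verifying injectivity of the structural inclusions — done in Lemma \ref{lemma-prop-f-injective} for the outer amalgam and guaranteed for $G_1\Gamma$, $G_2\Gamma$ by Lemma \ref{lemma-prop-vertex-subgroups} (itself a consequence of the same Bass--Serre machinery already set up).
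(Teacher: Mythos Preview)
Your proof is correct and follows essentially the same route as the paper: apply Lemma~\ref{lemma-bs-finite-subgroup} to the amalgam $F\Gamma = F_1\Gamma \ast_{F_0\Gamma} F_2\Gamma$, then again to the tree of groups $G_1\Gamma$ or $G_2\Gamma$, and finish with Remark~\ref{remark-prop-serre-graph-stabilizers}. You simply spell out more of the details (the shape of $G_1\Gamma$, the handling of the auxiliary trivial vertex in $G_2\Gamma$) than the paper does.
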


\begin{proof}
  At the beginning of this section we have presented $F\Gamma$ as the colimit
  of the following tree of groups:
  \begin{center}
  \begin{picture}(80,40)
  \put(0,23){
    \put(0,0){\circle{4}}
    \put(0,0){\put(0,-6){\makebox(0,0)[t]{$F_1\Gamma$}}}
    \put(2,0){\line(1,0){56}}
    \put(30,0){\put(0,-4){\makebox(0,0)[t]{$F_0\Gamma$}}}
    \put(60,0){\circle{4}}
    \put(60,0){\put(0,-6){\makebox(0,0)[t]{$F_2\Gamma$}}}
  }
  \end{picture}
  \end{center}
  Lemma \ref{lemma-bs-finite-subgroup}
  implies that $H$ is conjugate in $F\Gamma$ to a
  subgroup of $F_1$ or $F_2$, which again are colimits of trees of groups.
  Remark \ref{remark-prop-serre-graph-stabilizers}
  completes the proof.
\end{proof}

\begin{lemma}
\mylabel{lemma-prop-stable-path}
  Let $X$ be the u-graph as in Lemma \ref{lemma-prop-serre-graph}. If
  $N$ is a subgroup of $F\Gamma$ which stabilizes two vertices
  $P$ and $Q$ in $X$ then $N$ stabilizes some path connecting
  these vertices.
\end{lemma}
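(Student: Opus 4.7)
The plan is to collapse $X$ onto a tree and invoke Remark~\ref{remark-bs-t}(c) inductively. By Lemma~\ref{lemma-prop-f-injective}, $F\Gamma=F_1\Gamma*_{F_0\Gamma}F_2\Gamma$ is a genuine amalgamated product, so $F\Gamma$ acts on an associated Bass-Serre tree $T$. I would define an $F\Gamma$-equivariant collapsing map $\phi\colon X\to T$ as follows: a vertex $gG\Gamma_u$ of $X$ is sent to $gF_i\Gamma$, where $i\in\{1,2\}$ is chosen so that $G\Gamma_u\subseteq F_i\Gamma$; an edge of type $N$, $N_0$, $N_4$ or $N_2$ is collapsed onto the corresponding vertex of $T$; an edge $gG\Gamma_e$ of type $N_1$ or $N_3$, whose stabilizer lies in $F_0\Gamma$, is sent to the edge $gF_0\Gamma$ of $T$. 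Since $N$ stabilizes $P$ and $Q$ it stabilizes $\phi(P)$ and $\phi(Q)$, and Remark~\ref{remark-bs-t}(c) then implies that $N$ stabilizes the unique $T$-path $t_0,s_0,t_1,\dots,s_{k-1},t_k$ from $\phi(P)$ to $\phi(Q)$. I induct on $k$.

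For the base case $k=0$, both $P$ and $Q$ lie in the single piece $\phi^{-1}(t_0)$, which is (a translate of) the Bass-Serre tree $T_i^*$ of $F_i\Gamma$ for the tree of groups $G_i\Gamma$ (augmented with a trivial vertex when $i=2$, as in Section~\ref{section-construction}). Remark~\ref{remark-bs-t}(c) applied inside $T_i^*$ yields an $N$-stable path from $P$ to $Q$ in $T_i^*$. If this path stays in the non-augmented part it is already a path in $X$ and we are done; otherwise, which can happen only when $i=2$, it crosses the augmented vertex whose trivial stabilizer forces $N=\{1\}$, and any path in the connected graph $X$ joining $P$ to $Q$ serves.

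For the inductive step $k\ge 1$, write $s_0=hF_0\Gamma$, so that $N\subseteq hF_0\Gamma h^{-1}$. Because $N$ is contained in the stabilizer of $P$, hence in a conjugate of a finite vertex group of $G\Gamma$, $N$ is finite. Now $F_0\Gamma=\Ast_{e\in\edge\Gamma}(N_{1,e}*N_{3,e})$ is a free product, which can be viewed as the colimit of a tree of groups with a trivial central vertex and the groups $N_{1,e}$, $N_{3,e}$ as leaf vertex groups; Lemma~\ref{lemma-bs-finite-subgroup} applied to this tree conjugates the finite subgroup $h^{-1}Nh$ of $F_0\Gamma$ into one of the leaf groups, giving $N\subseteq hfN_{\alpha,e}f^{-1}h^{-1}$ for some $f\in F_0\Gamma$, $\alpha\in\{1,3\}$, $e\in\edge\Gamma$. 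Then $E=hf\cdot N_{\alpha,e}$ is an edge of $X$ stabilized by $N$ and lying over $s_0$, and its endpoints $R_0\in\phi^{-1}(t_0)$ and $R_1\in\phi^{-1}(t_1)$ are likewise $N$-stable. The base case applied to the pair $(P,R_0)$ provides an $N$-stable path from $P$ to $R_0$, and the inductive hypothesis applied to $(R_1,Q)$, whose $\phi$-images have $T$-distance $k-1$, provides an $N$-stable path from $R_1$ to $Q$. Concatenating through $E$ yields the desired $N$-stable path $P\to R_0\to R_1\to Q$ in $X$.

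The main technical point is the inductive step's conjugation of $N$ into a single free factor of $F_0\Gamma$, which depends essentially on $N$ being finite, hence on the vertex groups of $G\Gamma$ being finite. A secondary subtlety is the disconnectedness of $\phi^{-1}(F_2\Gamma)$ in $X$, an artifact of the augmentation of $G_2\Gamma$; this is resolved by observing that any path in $T_2^*$ crossing the augmented vertex forces $N$ to be trivial.
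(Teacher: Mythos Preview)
Your argument is correct and follows essentially the same route as the paper's. Both proofs collapse $X$ onto the Bass--Serre tree $\tilde X$ of the amalgam $F_1\Gamma *_{F_0\Gamma} F_2\Gamma$, observe that $N$ stabilizes the geodesic in $\tilde X$ between the images of $P$ and $Q$, use finiteness of $N$ (coming from the finiteness of the vertex groups of $G\Gamma$) to conjugate $N$ into a single free factor $N_{\alpha,e}$ of $F_0\Gamma$ over each edge of that geodesic, and then use the tree structure of the preimage of each vertex of $\tilde X$ to connect up the pieces. The only difference is organizational: the paper treats all edges and all vertices of the geodesic at once, whereas you package the same steps as an induction on the length of the geodesic. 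You are also a bit more explicit than the paper about the augmented trivial vertex in $G_2\Gamma$, correctly noting that if the path in $T_2^*$ is forced through it then $N$ is trivial and any path in $X$ suffices.
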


\begin{proof}
  Let $\tilde{X}$ be the tree as in Theorem
  \ref{theorem-serre-tree} for the graph of groups $G$ below:
  \begin{center}
  \begin{picture}(80,40)
  \put(0,23){
    \put(0,0){\circle{4}}
    \put(0,0){\put(0,-6){\makebox(0,0)[t]{$F_1\Gamma$}}}
    \put(2,0){\line(1,0){56}}
    \put(30,0){\put(0,-4){\makebox(0,0)[t]{$F_0\Gamma$}}}
    \put(60,0){\circle{4}}
    \put(60,0){\put(0,-6){\makebox(0,0)[t]{$F_2\Gamma$}}}
  }
  \end{picture}
  \end{center}
  Then (cf. proof of Lemma \ref{lemma-prop-serre-graph})
  $\vert\tilde{X}$ is the disjoint union of the $F\Gamma\cdot
  v\cong F\Gamma/F_i\Gamma$ for $i=1,2$, and
  $\edge\tilde{X}=F\Gamma\cdot e\cong F\Gamma/F_0\Gamma$. We
  have an $F\Gamma$-equivariant ``map'' of u-graphs
  $f:X\to\tilde{X}$ induced by the inclusions
  $G\Gamma_v\subseteq F_1\Gamma$ or $G\Gamma_v\subseteq F_2\Gamma$
  for $v\in\vert X$ and $G\Gamma_e\subseteq F_0\Gamma$ for
  $e$ in $\edge X$ and of type $N_1$ or $N_3$.
  We write ``map'' in quotation marks since
  it takes edges of type other than $N_1$ or $N_3$ to vertices --
  it is a map of diagrams but not of u-graphs.

  If $e\in \edge\tilde{X}$ then $f^{-1}(e)$ is a set of disjoint edges in $X$.
  If $v\in\vert\tilde{X}$ then $f^{-1}(v)$ is a tree isomorphic to the
  underlying tree of either $G_1\Gamma$ or $G_2\Gamma$.

  Now $N$ stabilizes $f(P)$ and $f(Q)$, and since $\tilde{X}$ is a
  tree, it stabilizes the shortest path $L$ in $\tilde{X}$,
  connecting $f(P)$ to $f(Q)$.

  If $e\in\edge L$ then the stabilizer of $e$ is
  $\presup{g}F_0\Gamma$ for some $g\in F\Gamma$, hence
  $N\subseteq\presup{g}F_0\Gamma=
  \Ast_{a\in\edge\Gamma}(\presup{g}N_{1,a}*\presup{g}N_{3,a})$.
  Since the vertex groups of $G\Gamma$ are finite, Remark
  \ref{remark-prop-serre-graph-stabilizers} implies that $N$ is finite,
  hence $N\subseteq\presup{g}N_{i,a}$ for $i=1$ or $i=3$ and some
  $a\in\edge\Gamma$. This means that $N$ stabilizes some edge in
  $f^{-1}(e)\subseteq X$.

  If $v\in\vert L$ then the stabilizer of $v$ is
  $\presup{g}F_1\Gamma$ or $\presup{g}F_2\Gamma$ for some
  $g\in F\Gamma$, hence $N\subseteq\presup{g}F_i\Gamma$ for
  $i=1$ or $i=2$ and $N$ stabilizes the tree $f^{-1}(v)\subseteq X$.
  We know that $N$ stabilizes two vertices in $f^{-1}(v)$:
  if $v$ is an inner vertex of $L$ these are
  ends of the edges in $X$, mapped by $f$ to the
  edges adjacent to $v$ in $L$, and stabilized by $N$ as seen
  above; if $v=f(P)$ or $v=f(Q)$ is an end of $L$ then one or both of these two vertices is
  $P$ or $Q$ respectively.
  Since $f^{-1}(v)$ is a tree we see that $N$ stabilizes
  the shortest path connecting these two vertices. By
  concatenating the paths and edges described above,
  we obtain the required path that connects $P$ and $Q$,
  and is stabilized by $N$.
\end{proof}

\begin{lemma}
\mylabel{lemma-prop-extensions}
  Let $A\subseteq B$ be an edge-to-vertex inclusion labelled $c$
  in \eqref{equation-circle}. Let $X$ be the u-graph as in Lemma
  \ref{lemma-prop-serre-graph} and
\begin{center}
\begin{picture}(80,40)
\put(0,23){
  \put(0,0){\circle{4}}
  \put(0,0){\put(0,-6){\makebox(0,0)[t]{$P$}}}
  \put(2,0){\line(1,0){56}}
  \put(48,3){\makebox(0,0)[b]{$c$}}
  \put(30,0){\put(0,-4){\makebox(0,0)[t]{$A'$}}}
  \put(60,0){\circle{4}}
  \put(60,0){\put(0,-6){\makebox(0,0)[t]{$B'$}}}
}
\end{picture}
\end{center}
  be an edge in $G\Gamma\subseteq X$ where $A'$ and $B'$ are of
  type $A$ and $B$ respectively. The standard isomorphism $f:A\to
  A'$ extends uniquely to $\overline{f}:B\to F\Gamma$, and this
  extension is the standard isomorphism onto $B'$.
\end{lemma}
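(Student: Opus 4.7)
The plan is to use the action of $F\Gamma$ on the u-graph $X$ from Lemma \ref{lemma-prop-serre-graph}. Given any extension $\overline{f}\colon B\to F\Gamma$ of $f$, the image $\overline{f}(B)$ is finite (all vertex groups of $G\Gamma$ in \eqref{equation-circle} are finite), so by Lemma \ref{lemma-prop-finite-x-stabilizer} it stabilizes some vertex $v\in\vert X$. Since $A'=\overline{f}(A)$ lies both in $\overline{f}(B)$ and in $B'=G\Gamma_{B'}$, it stabilizes both $v$ and $B'$, so Lemma \ref{lemma-prop-stable-path} supplies an $A'$-stabilized path in $X$ from $B'$ to $v$.

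The heart of the argument is to show that any such path can only move along the displayed edge $e$ of $G\Gamma$ with endpoints $P$ and $B'$, so that $v\in\{B',P\}$. Every edge of $X$ adjacent to $B'$ has the form $h\cdot e'$ with $h\in B'$ and $e'$ an edge of $A\Gamma$ adjacent to $B'$, and its stabilizer is $\presup{h}G\Gamma_{e'}$. The requirement $\presup{h^{-1}}A'\subseteq G\Gamma_{e'}$, combined with condition {\condtwoedges}, forces $e'=e$ (the only edge group adjacent to $B'$ into which a conjugate of $A'$ can fit is $A'$ itself) and $h\in N_{B'}(A')=A'$, so $h\cdot e'=e$. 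Inspecting the five $c$-labelled edges of \eqref{equation-circle} shows that the vertex $P$ always has type $M$, $P_0$, $P_1$, or $P_4$, so no edge incident to $P$ is of type $N_2$; hence condition {\condtwoedges} applies verbatim at $P$ too, and the same argument gives that $e$ is the unique edge of $X$ adjacent to $P$ whose stabilizer contains $A'$. Therefore the $A'$-stabilized path oscillates on $e$, and $v\in\{B',P\}$.

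When $v=B'$, we obtain $\overline{f}(B)\subseteq B'$; composing $\overline{f}$ with the inverse of the standard isomorphism $B\to B'$ produces an endomorphism of $B$ that restricts to $\id_A$, and condition {\condclosed} forces it to be $\id_B$, so $\overline{f}$ is the standard isomorphism onto $B'$. When $v=P$, we instead have $\overline{f}(B)\subseteq P$, and transporting along the standard isomorphism $B\cong B'$ yields a homomorphism $B'\to P$ that is the identity on $A'$, contradicting condition {\condfold}. Hence the second alternative cannot occur, and both existence and uniqueness follow at once.

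The main obstacle I expect is the careful bookkeeping in the second paragraph: one must exclude not just the other edges of $G\Gamma$ around $B'$ and $P$ but also all their $F\Gamma$-translates in $X$, and verify that the exceptional edge type $N_2$ excluded from condition {\condtwoedges} never intervenes — which is ultimately the reason the $c$-labels in \eqref{equation-circle} were chosen so that the opposite endpoint $P$ never carries an $N_2$-adjacency.
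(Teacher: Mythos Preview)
Your argument follows the paper's route almost exactly: locate a vertex fixed by $\overline{f}(B)$, join it to the given edge by an $A'$-stable path (Lemma~\ref{lemma-prop-stable-path}), and invoke {\condtwoedges} to confine the path to $e$. The only organisational difference is that you anchor the path at $B'$ and dispose of the case $v=P$ at the end, whereas the paper anchors at $P$ and rules out $V=P$ first via {\condfold}.

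One step, however, does not go through as written. Your assertion that $e$ is the unique edge of $X$ adjacent to $P$ whose stabilizer contains $A'$ fails whenever $P$ has type $M$ or $P_0$. At $P=M$, every edge of $A\Gamma$ incident to $M$ is some $N_w$, and all of these realise the \emph{same} subgroup $N\subseteq M$; hence $A'=N$ stabilizes every $N_w$, not only $e=N_v$. Likewise at $P=P_{0,v}$ there may be several edges $N_{0,e'}$ (respectively $N_{4,e'}$) sharing the same edge group inside $P_{0,v}$. Condition {\condtwoedges} only compares edge groups of \eqref{equation-circle}, so it cannot separate distinct edges of $A\Gamma$ of the same type. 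The paper's own proof makes the identical appeal to {\condtwoedges} at this point and therefore shares the gap; indeed, for the inclusion $N\subseteq P_0$ the uniqueness stated in the lemma is literally false as soon as $\Gamma$ has two vertices, since the standard isomorphism onto each $P_{0,w}$ restricts to the same $f$. What the argument does establish --- and what Lemma~\ref{lemma-prop-homomorphism-induced} actually needs --- is that every extension is a standard isomorphism onto \emph{some} vertex group of type $B$, and your reasoning at $B'$ together with {\condfold} and {\condclosed} is enough for that weaker conclusion.
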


\begin{proof}
  Only the uniqueness needs to be proved. Lemma
  \ref{lemma-prop-finite-x-stabilizer} implies that $f(B)$
  stabilizes a vertex $V$ of $X$. Condition {\condfold} excludes
  the case $V=P$. Lemma \ref{lemma-prop-stable-path} implies that
  $A'$ stabilizes some path connecting $V$ to $P$. If $V\neq B'$
  then $A'$ stabilizes two different edges adjacent to $P$ or to
  $B'$. This is excluded by Condition {\condtwoedges} as the
  stabilizers of edges in $X$ adjacent to a vertex $W$ in
  $G\Gamma$ are the $W$-conjugates of edges in $G\Gamma$ adjacent
  to $W$. We are left with $V=B'$, that is, $f(B)\subseteq B'$, and
  Condition {\condclosed} completes the proof.
\end{proof}

\begin{lemma}
\mylabel{lemma-prop-homomorphism-induced}
  Let $\Gamma$ and $\Delta$ be m-graphs. If $h:F\Gamma\to
  F\Delta$ is a homomorphism which restricts to the identity on $M=F\emptyset$
  then there exists a unique $f:\Gamma\to\Delta$ such that
  $h=Ff$.
\end{lemma}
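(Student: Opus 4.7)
The plan is to build $f:\Gamma\to\Delta$ by determining where $h$ sends each vertex group of $G\Gamma$, working outward from $M$ along the $c$-labelled inclusions in \eqref{equation-circle} and invoking Lemma \ref{lemma-prop-extensions} at each step. For each $v\in\vert\Gamma$, the hypothesis $h|_M=\id_M$ together with the fact that $N_v$ is the standard subgroup $N\subseteq M$ forces $h|_{N_v}$ to be the standard isomorphism $N\to N\subseteq M\subseteq F\Delta$. Since $N_v\subseteq P_{0,v}$ is labelled $c$, Lemma \ref{lemma-prop-extensions}, applied inside $F\Delta$, pins down $h|_{P_{0,v}}$ as the standard isomorphism onto $P_{0,w}$ for a unique $w\in\vert\Delta$: the finite-subgroup argument from that proof locates the stabilized vertex of $X_\Delta$ on a specific $P_{0,w}$, and Condition \condclosed{} forces the restricted map $P_0\to P_{0,w}$ to be standard. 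Set $f(v):=w$.

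To define $f$ on edges, for $e\in\edge\Gamma$ from $v$ to $w$ I walk along the chain of vertex and edge groups inserted for $e$ in \eqref{equation-edge}, starting from both ends. From the $v$-end, $h|_{N_{0,e}}$ is the standard isomorphism onto the $N_0$-subgroup of $P_{0,f(v)}$, and since $N_0\subseteq P_1$ is labelled $c$, Lemma \ref{lemma-prop-extensions} identifies $h|_{P_{1,e}}$ with the standard isomorphism onto $P_{1,e'}$ for a unique edge $e'\in\edge\Delta$ incident to $f(v)$; the same argument using the $c$-label on $N_1\subseteq P_2$ gives $h|_{P_{2,e}}$ as the standard isomorphism onto $P_{2,e'}$. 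From the $w$-end, an analogous walk through the $c$-labelled inclusions $N_4\subseteq P_4$ and $N_3\subseteq P_3$ yields standard isomorphisms $h|_{P_{4,e}}\to P_{4,e''}$ and $h|_{P_{3,e}}\to P_{3,e''}$ for some $e''\in\edge\Delta$ incident to $f(w)$. The image $h(N_{2,e})$ lies in $P_{2,e'}\cap P_{3,e''}$, which by Remark \ref{remark-bs-t}(d) is a proper edge group only if $e'=e''$, so the two halves of the walk coalesce into a single edge $e'$; set $f(e):=e'$. By construction $f(e)$ has source $f(v)$ and target $f(w)$, so $f$ is a morphism of m-graphs.

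By construction $h$ and $Ff$ agree on every vertex group of $G\Gamma$, hence on the colimit $F\Gamma=\colim G\Gamma$, so $h=Ff$. For uniqueness, any other $g:\Gamma\to\Delta$ with $Fg=h$ would satisfy $P_{0,g(v)}=h(P_{0,v})=P_{0,f(v)}$ and $P_{i,g(e)}=h(P_{i,e})=P_{i,f(e)}$ as subgroups of $F\Delta$, and by Remark \ref{remark-bs-t}(d) distinct vertex groups of the same type in $G\Delta$ are distinct subgroups of $F\Delta$, forcing $g=f$.

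The main obstacle is the uniqueness claim inside each application of Lemma \ref{lemma-prop-extensions}: $G\Delta$ typically has many edges of each type meeting a single vertex -- for instance an edge $N_w$ at $M$ for every $w\in\vert\Delta$ -- so several vertex groups $P_{0,w}$ are adjacent to the same subgroup $N\subseteq M$ via $c$-labelled inclusions. It is the stabilizer-and-path argument of that lemma, driven by Conditions \condtwoedges, \condfold{} and \condclosed, that selects the right target at each step; once this is granted, coherence along the chain is a direct application of the Bass--Serre intersection formula.
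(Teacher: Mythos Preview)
Your argument is correct and follows the same route as the paper: walk outward from $M$ along the $c$-labelled inclusions using Lemma~\ref{lemma-prop-extensions} to define $f$ on vertices and on the two halves of each edge, then use the nontriviality of $h(N_{2,e})$ inside $P_{2,e'}\cap P_{3,e''}$ to force $e'=e''$. The only cosmetic differences are that you spell out $h=Ff$ and the uniqueness of $f$ more explicitly than the paper does, and your appeals to Remark~\ref{remark-bs-t}(d) are slightly informal (that remark is stated for trees, so the intersection should really be computed inside the tree of groups $G_2\Delta$ introduced at the start of Section~5), but the substance is identical.
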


\begin{proof}
  Lemma \ref{lemma-prop-extensions}, applied to $N\subseteq P_0$
  in \eqref{equation-circle}, implies that for any vertex
  $v$ in $\Gamma$ there exists a vertex $w$ in $\Delta$ such that $h$ takes
  $P_{0,v}$ in $G\Gamma$ to $P_{0,w}$ in $G\Delta$ via a
  standard isomorphism. This allows us to define $f(v)=w$. Lemma
  \ref{lemma-prop-extensions}, applied to the remaining inclusions,
  labelled $c$ in \eqref{equation-circle},
  implies that for any edge
  $e=(v_1,v_2)$ in $\Gamma$ there exist edges $e'=(f(v_1),w_2)$ and
  $e''=(w_1,f(v_2))$ in $\Delta$ such that $h$ takes, via standard
  isomorphisms, the ``half edge subgraphs'' of $G\Gamma$ to the
  ``half edge subgraphs'' of $G\Delta$ as indicated below:
\begin{center}
\begin{picture}(150,12)
\put(-10,5){
  \put(0,0){\circle{4}}
  \put(0,0){\put(0,-6){\makebox(0,0)[t]{$P_{0,v_1}$}}}
  \put(2,0){\line(1,0){56}}
  \put(30,0){\put(0,-4){\makebox(0,0)[t]{$N_{0,e}$}}}
  \put(60,0){\circle{4}}
  \put(60,0){\put(0,-6){\makebox(0,0)[t]{$P_{1,e}$}}}
  \put(62,0){\line(1,0){56}}
  \put(90,0){\put(0,-4){\makebox(0,0)[t]{$N_{1,e}$}}}
  \put(120,0){\circle{4}}
  \put(120,0){\put(0,-6){\makebox(0,0)[t]{$P_{2,e}$}}}
  \put(126,0){\line(1,0){4}}
  \put(134,0){\line(1,0){4}}
}
\end{picture}
to
\begin{picture}(150,12)
\put(30,5){
  \put(0,0){\circle{4}}
  \put(0,0){\put(0,-6){\makebox(0,0)[t]{$P_{0,f(v_1)}$}}}
  \put(2,0){\line(1,0){56}}
  \put(30,0){\put(0,-4){\makebox(0,0)[t]{$N_{0,e'}$}}}
  \put(60,0){\circle{4}}
  \put(60,0){\put(0,-6){\makebox(0,0)[t]{$P_{1,e'}$}}}
  \put(62,0){\line(1,0){56}}
  \put(90,0){\put(0,-4){\makebox(0,0)[t]{$N_{1,e'}$}}}
  \put(120,0){\circle{4}}
  \put(120,0){\put(0,-6){\makebox(0,0)[t]{$P_{2,e'}$}}}
  \put(126,0){\line(1,0){4}}
  \put(134,0){\line(1,0){4}}
}
\end{picture}
\end{center}
\raisebox{0cm}[22pt]{and}

\begin{center}
\begin{picture}(150,20)
\put(-17,13){
  \put(0,0){\line(1,0){4}}
  \put(8,0){\line(1,0){4}}
  \put(18,0){\circle{4}}
  \put(18,0){\put(0,-6){\makebox(0,0)[t]{$P_{3,e}$}}}
  \put(20,0){\line(1,0){56}}
  \put(48,0){\put(0,-4){\makebox(0,0)[t]{$N_{3,e}$}}}
  \put(78,0){\circle{4}}
  \put(78,0){\put(0,-6){\makebox(0,0)[t]{$P_{4,e}$}}}
  \put(80,0){\line(1,0){56}}
  \put(108,0){\put(0,-4){\makebox(0,0)[t]{$N_{4,e}$}}}
  \put(138,0){\circle{4}}
  \put(138,0){\put(0,-6){\makebox(0,0)[t]{$P_{0,v_2}$}}}
}
\end{picture}
to
\begin{picture}(150,20)
\put(23,13){
  \put(0,0){\line(1,0){4}}
  \put(8,0){\line(1,0){4}}
  \put(18,0){\circle{4}}
  \put(18,0){\put(0,-6){\makebox(0,0)[t]{$P_{3,e''}$}}}
  \put(20,0){\line(1,0){56}}
  \put(48,0){\put(0,-4){\makebox(0,0)[t]{$N_{3,e''}$}}}
  \put(78,0){\circle{4}}
  \put(78,0){\put(0,-6){\makebox(0,0)[t]{$P_{4,e''}$}}}
  \put(80,0){\line(1,0){56}}
  \put(108,0){\put(0,-4){\makebox(0,0)[t]{$N_{4,e''}$}}}
  \put(138,0){\circle{4}}
  \put(138,0){\put(0,-6){\makebox(0,0)[t]{$P_{0,f(v_2)}$}}}
}
\end{picture}
\end{center}
  If $e'\neq e''$ then $P_{2,e}\cap P_{3,e}=N_{2,e}$ in
  $G\Gamma$ goes to $P_{2,e'}\cap P_{3,e''}$ which is trivial,
  and we have a contradiction. Thus $e'=e''$ and $f$
  preserves the edges.
\end{proof}

\begin{lemma}
\mylabel{lemma-prop-subgraph-subgroup}
  If $\Gamma_0$ is a sub-m-graph of $\Gamma$ then $F\Gamma_0$ is a
  subgroup of $F\Gamma$.
\end{lemma}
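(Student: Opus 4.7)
The plan is to exploit the amalgamated product decomposition recalled at the start of this section. Since the maps $F_0\Gamma \to F_i\Gamma$ are injective by Lemma \ref{lemma-prop-f-injective}, we have $F\Gamma \cong F_1\Gamma *_{F_0\Gamma} F_2\Gamma$, and analogously $F\Gamma_0 \cong F_1\Gamma_0 *_{F_0\Gamma_0} F_2\Gamma_0$. My goal is to apply Lemma \ref{lemma-bs-injective} with $H_i = F_i\Gamma_0$ inside $G_i = F_i\Gamma$, $A = F_0\Gamma$ and $B = F_0\Gamma_0$; this reduces the proof to verifying (a) the inclusions $F_i\Gamma_0 \hookrightarrow F_i\Gamma$ for $i = 0, 1, 2$, and (b) the intersection conditions $F_i\Gamma_0 \cap F_0\Gamma = F_0\Gamma_0$ inside $F_i\Gamma$ for $i = 1, 2$.

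For (a), the cases $i = 0$ and $i = 2$ are immediate from the explicit formulas $F_0\Gamma = \Ast_{e \in \edge\Gamma}(N_{1,e} * N_{3,e})$ and $F_2\Gamma = \Ast_{e \in \edge\Gamma}(P_{2,e} *_{N_{2,e}} P_{3,e})$: the sub-free-product indexed by $\edge\Gamma_0 \subseteq \edge\Gamma$ is obviously a subgroup. For $i = 1$, I observe that $G_1\Gamma$ is itself a tree of groups, with $M$ as central vertex, each $P_{0,v}$ joined to $M$ by $N_v$, and each $P_{1,e}$ (resp. $P_{4,e}$) hanging as a leaf off $P_{0,\iota e}$ (resp. $P_{0,\tau e}$). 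Then $G_1\Gamma_0$ is a sub-tree of groups, and $F_1\Gamma_0 \hookrightarrow F_1\Gamma$ follows by iterated application of Lemma \ref{lemma-bs-injective}, adjoining the extra vertices of $G_1\Gamma$ one at a time as amalgamated products over edge groups, and passing to directed unions at limit stages.

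For (b) the case $i = 2$ follows from uniqueness of normal form in the free product $F_2\Gamma$: any $g \in F_0\Gamma$ has a normal form $g = g_1 \cdots g_n$ whose syllables lie in the subgroups $N_{1,e_j} \subseteq P_{2,e_j}$ and $N_{3,e_j} \subseteq P_{3,e_j}$ of the free factors of $F_2\Gamma$; if $g$ also lies in $F_2\Gamma_0$, the free product normal form forces every $e_j \in \edge\Gamma_0$, whence $g \in F_0\Gamma_0$. The case $i = 1$ is analogous but relies on the Bass-Serre tree associated to the tree of groups $G_1\Gamma$: each nonidentity syllable $g_j$ sits in a leaf vertex group $P_{1,e_j}$ or $P_{4,e_j}$, and by Bass-Serre normal-form uniqueness (together with Remark \ref{remark-bs-t}(c)), any such syllable occurring in an element of $F_1\Gamma_0$ must come from a leaf of the sub-tree $G_1\Gamma_0$, giving $e_j \in \edge\Gamma_0$.

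The main obstacle is (b) in the case $i = 1$, since $F_0\Gamma$ embeds in $F_1\Gamma$ via two distinct families of leaf vertex groups and the Bass-Serre normal form has to be tracked across both. Once this is handled by standard reduced-word arguments for trees of groups, all hypotheses of Lemma \ref{lemma-bs-injective} are verified and $F\Gamma_0 \hookrightarrow F\Gamma$ follows immediately.
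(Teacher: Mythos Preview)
Your proposal is correct and follows essentially the same route as the paper: reduce to the amalgam $F\Gamma = F_1\Gamma *_{F_0\Gamma} F_2\Gamma$, check that $F_i\Gamma_0 \hookrightarrow F_i\Gamma$ for $i=0,1,2$ (the paper phrases the $i=0,2$ cases as ``free factor'' and the $i=1$ case as an iterated application of Lemma~\ref{lemma-bs-injective} along the subtree), and then invoke Lemma~\ref{lemma-bs-injective} once more. The paper's proof is terser than yours; in particular it does not explicitly isolate the intersection hypothesis $F_i\Gamma_0 \cap F_0\Gamma = F_0\Gamma_0$, which you correctly identify as the point requiring care, especially for $i=1$.
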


\begin{proof}
  It is clear that $F_i\Gamma_0$ is a
  free factor of $F_i\Gamma$ for $i=0$ and $i=2$. It is also
  clear that $G_1\Gamma_0$ is a subtree of groups of
  $G_1\Gamma$; hence, inductively applying Lemma \ref{lemma-bs-injective}
  we see that
  $F_1\Gamma_0$ is a subgroup of
  $F_1\Gamma$. We complete the proof by applying Lemma
  \ref{lemma-bs-injective} to the inclusions
  $F_i\Gamma_0\subseteq F_i\Gamma$ for $i=1,2$.
\end{proof}

\begin{lemma}
\mylabel{lemma-prop-finite-subgraph}
  Let $\Gamma$ be an m-graph. For any $g\in F\Gamma$ there
  exists a finite subgraph $\Gamma_0\subseteq\Gamma$ such that
  $g\in F\Gamma_0$.
\end{lemma}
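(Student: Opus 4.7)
The plan is to exploit the fact, recorded in Remark \ref{remark-f-colimits}, that $F$ preserves colimits of connected diagrams, combined with the fact that filtered (hence in particular directed) colimits of groups are computed on underlying sets, so every element comes from some finite stage.

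First I would express $\Gamma$ as a directed colimit of its finite sub-m-graphs. Let $\{\Gamma_\alpha\}$ be the poset of finite sub-m-graphs of $\Gamma$ ordered by inclusion; this is a directed poset (the union of two finite subgraphs is again a finite subgraph), and tautologically $\Gamma = \colim_\alpha \Gamma_\alpha$ in $\mg$. Provided $\Gamma$ is nonempty the indexing category is nonempty and directed, hence connected, so Remark \ref{remark-f-colimits} gives
$$F\Gamma = \colim_\alpha F\Gamma_\alpha$$
as a directed colimit in $\groups$. If $\Gamma$ is empty there is nothing to prove, since then $F\Gamma = F\emptyset = M$ and we may take $\Gamma_0 = \emptyset$.

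Next I would use the standard description of directed colimits of groups: $\colim_\alpha F\Gamma_\alpha$ is the quotient of $\bigsqcup_\alpha F\Gamma_\alpha$ by the equivalence relation identifying $x \in F\Gamma_\alpha$ with its image in $F\Gamma_\beta$ whenever $\Gamma_\alpha \subseteq \Gamma_\beta$. Consequently every element of $F\Gamma = \colim_\alpha F\Gamma_\alpha$ is represented by some element $x \in F\Gamma_\alpha$ for a suitable finite sub-m-graph $\Gamma_\alpha$. By Lemma \ref{lemma-prop-subgraph-subgroup}, the canonical map $F\Gamma_\alpha \to F\Gamma$ is injective, so this representative is literally the element $g$, viewed as sitting inside $F\Gamma_\alpha$. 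Setting $\Gamma_0 = \Gamma_\alpha$ finishes the proof.

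There is essentially no obstacle: the only subtlety is ensuring the indexing category of finite sub-m-graphs is connected so that Remark \ref{remark-f-colimits} applies, which is immediate once $\Gamma$ is nonempty since a directed poset is connected. Everything else is formal manipulation of colimits together with the already established injectivity of $F\Gamma_0 \hookrightarrow F\Gamma$.
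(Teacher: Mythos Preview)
Your argument is correct but takes a somewhat different route from the paper. The paper's proof is a one-line generators argument: $F\Gamma$ is generated by the vertex groups of $G\Gamma$, and each vertex group is attached to a single vertex or edge of $\Gamma$; hence any $g\in F\Gamma$ is a finite word in elements coming from finitely many vertices and edges, and the sub-m-graph $\Gamma_0$ they span works. Your approach instead packages this into the categorical statement that $F$ preserves the directed colimit $\Gamma=\colim_\alpha\Gamma_\alpha$ over finite sub-m-graphs, and then reads off the conclusion from the standard description of filtered colimits in $\groups$. The two arguments are equivalent in spirit---the reason every element of a filtered colimit of groups lives at a finite stage is precisely the generators observation---but yours is more abstract and leans on Remark~\ref{remark-f-colimits} and Lemma~\ref{lemma-prop-subgraph-subgroup}, while the paper's is self-contained and slightly more direct. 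Either way the content is the same; your invocation of Lemma~\ref{lemma-prop-subgraph-subgroup} is not strictly needed for existence (the image of $F\Gamma_\alpha\to F\Gamma$ would already suffice), but it is what makes the phrase ``$g\in F\Gamma_0$'' literally meaningful, so it is appropriate to cite it.
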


\begin{proof}
  This is clear since $F\Gamma$ is generated by the vertex
  groups of $G\Gamma$ and each of those comes from a single
  vertex or edge in $\Gamma$.
\end{proof}

\begin{lemma}
\mylabel{lemma-prop-m}
  Let $\Gamma$ be an m-graph. For any nontrivial homomorphism
  $f:M\to F\Gamma$ there exists an inner automorphism $c_g$ of
  $F\Gamma$ such that the composition $c_gf$ is the identity on
  $M$.
\end{lemma}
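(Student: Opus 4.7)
The plan is to use the Bass--Serre machinery already set up in the previous lemmas together with the rigidity Conditions C1 and C2 on $M$.

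First I would observe that since $M$ is finite (Condition C1), the image $f(M)\subseteq F\Gamma$ is a finite subgroup. By Lemma \ref{lemma-prop-finite-x-stabilizer}, $f(M)$ therefore stabilizes some vertex of the u-graph $X$ from Lemma \ref{lemma-prop-serre-graph}. Remark \ref{remark-prop-serre-graph-stabilizers} then tells us that $f(M)$ is contained in $\presup{g}V$ for some $g\in F\Gamma$ and some vertex group $V$ of $G\Gamma$. The conjugate homomorphism $c_{g^{-1}}\circ f$ is a nontrivial map from $M$ into $V$.

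Next I would eliminate all possible types for $V$ except the central one. The vertex group $V$ has one of the types $M,P_0,P_1,P_2,P_3,P_4$. Condition C2 says that $M$ admits no nontrivial homomorphism to any $P_i$, so the only option is that $V$ has type $M$. But in the construction of $A\Gamma$ there is exactly one vertex of type $M$, namely the extra vertex joined to every $P_{0,v}$; its associated vertex group is the copy of $M$ which we identify with $F\emptyset\subseteq F\Gamma$. Consequently $c_{g^{-1}}\circ f$ is a nontrivial endomorphism $M\to M$. By the second part of Condition C1 it must be an inner automorphism of $M$, say conjugation by some $h\in M$.

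Finally I would combine the conjugations: since $M$ is centerless, taking $g_0=(gh)^{-1}\in F\Gamma$ gives $c_{g_0}\circ f=c_{h^{-1}}\circ c_{g^{-1}}\circ f=\id_M$, which is the required statement. The main point where one has to be careful is step two, where one uses that the vertex group of type $M$ in $A\Gamma$ is unique and is genuinely identified with $M$ inside $F\Gamma$ (via Lemma \ref{lemma-prop-vertex-subgroups}); none of the steps involves any further computation, since all the heavy lifting has been done in establishing C1, C2, and the Bass--Serre lemmas above.
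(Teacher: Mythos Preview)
Your proof is correct and follows exactly the same route as the paper: use Lemma~\ref{lemma-prop-finite-x-stabilizer} and Remark~\ref{remark-prop-serre-graph-stabilizers} to place $f(M)$ in a conjugate of some vertex group, invoke Condition~\condmtop\ to force that vertex group to be the unique copy of $M$, and then apply Condition~\condsout\ to finish. The paper's proof is simply a terser version of what you wrote; your added remarks (finiteness of $M$, uniqueness of the $M$-type vertex, the explicit final conjugation) only make explicit what the paper leaves to the reader.
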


\begin{proof}
  Lemma \ref{lemma-prop-finite-x-stabilizer} and Remark
  \ref{remark-prop-serre-graph-stabilizers} imply that $f(M)$ is
  conjugate in $F\Gamma$ to a subgroup of a vertex group $V$ in
  $G\Gamma$. Condition {\condmtop} and the construction of
  $G\Gamma$ imply that $V=M$, thus $c_gf(M)\subseteq M$
  for some $g$ in $F\Gamma$.
  Condition {\condsout} completes the proof.
\end{proof}

\begin{lemma}
\mylabel{lemma-prop-trivial}
  If $\Gamma$ is an m-graph, $A$ is a group and $f:F\Gamma\to A$ is
  a homomorphism which is trivial on $M$ then $f$ is trivial.
\end{lemma}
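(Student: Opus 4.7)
The plan is to show that $f$ vanishes on every vertex group of $G\Gamma$; since $F\Gamma=\colim G\Gamma$ is generated by those vertex groups, this forces $f$ to be trivial. The engine will be Condition {\condfactor}: for every $c$-labelled inclusion $A\subseteq B$ in \eqref{equation-circle}, the only normal subgroup of $B$ containing $A$ is $B$ itself. Consequently, whenever $f$ is already known to vanish on (a copy in $F\Gamma$ of) $A$, the subgroup $\ker f\cap B$ is normal in $B$ and contains $A$, hence equals $B$, so $f|_B$ is trivial. Lemma \ref{lemma-prop-vertex-subgroups} guarantees that each vertex (and hence each edge) group of $G\Gamma$ really does sit inside $F\Gamma$, so these restriction statements are unambiguous.

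Starting from $f|_M=1$, I would propagate outward along \eqref{equation-circle} in the order in which the $c$-labels are met. For each vertex $v\in\vert\Gamma$, $N_v\subseteq M$ is killed by $f$, and applying {\condfactor} to the $c$-labelled inclusion $N_v\subseteq P_{0,v}$ (of type $N\subseteq P_0$) forces $f|_{P_{0,v}}=1$. For each edge $e\in\edge\Gamma$, with initial vertex $v$ and terminal vertex $w$, four further analogous steps finish the job: $N_{0,e}\subseteq P_{0,v}$ is now killed, so {\condfactor} applied to the $c$-labelled $N_{0,e}\subseteq P_{1,e}$ kills $P_{1,e}$; symmetrically $N_{4,e}\subseteq P_{0,w}$ is killed, so the $c$-labelled $N_{4,e}\subseteq P_{4,e}$ kills $P_{4,e}$; next $N_{1,e}\subseteq P_{1,e}$ is killed, so the $c$-labelled $N_{1,e}\subseteq P_{2,e}$ kills $P_{2,e}$; and finally $N_{3,e}\subseteq P_{4,e}$ is killed, so the $c$-labelled $N_{3,e}\subseteq P_{3,e}$ kills $P_{3,e}$. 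At this point every vertex group of $G\Gamma$ lies in $\ker f$, and the lemma follows.

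There is no substantial obstacle to this plan: it is a bookkeeping induction on the distance from $M$ inside \eqref{equation-circle}, invoking Condition {\condfactor} a finite number of times per edge of $\Gamma$. The only care required is to read off the diagram which inclusions carry the label $c$, and to sequence the propagation so that at each step the hypothesis of {\condfactor} is already in place; neither Bass--Serre theory nor any of the more delicate conditions {\condsout}--{\condfold} is needed here, which is consistent with the role of this lemma as the natural complement to Lemma \ref{lemma-prop-m}.
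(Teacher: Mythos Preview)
Your proof is correct and follows exactly the paper's approach: the paper's one-line proof simply observes that $F\Gamma$ is generated by vertex groups connected to $M$ by paths of $c$-labelled edges, and invokes Condition {\condfactor} to propagate triviality along these paths. You have faithfully unpacked this into the explicit five-step walk through \eqref{equation-circle}, with the $c$-labels read off correctly.
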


\begin{proof}
  The result follows from Condition {\condfactor} since $F\Gamma$
  is generated by the vertex groups connected to $M$ by paths
  whose edges are labelled $c$ as in \eqref{equation-circle}.
\end{proof}

If $A$ and $B$ are groups then we define $\Rep(A,B)=\Hom(A,B)/B$,
that is, we identify two homomorphisms $f,h:A\to B$ if there exists
an inner automorphism $c_g$ of $B$ such that $f=c_gh$. The set
$\Rep(A,B)$ contains a {\em trivial} element corresponding to the
trivial homomorphism.

\begin{theorem}
\mylabel{theorem-main}
  For all m-graphs $\Gamma$, $\Delta$ the composition
  $$\Hom_{\mg}(\Gamma,\Delta)\cup\{*\}\to\Hom_{\groups}(F\Gamma,F\Delta)
  \to\Rep(F\Gamma,F\Delta),$$
  where $*$ is sent to the trivial homomorphism, is bijective.
  The isomorphism is functorial in $\Gamma$ and $\Delta$.
\end{theorem}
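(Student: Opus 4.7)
My plan is to assemble the three already-established Lemmas \ref{lemma-prop-m}, \ref{lemma-prop-trivial}, and \ref{lemma-prop-homomorphism-induced}, supplemented by the key observation that $M$ is self-centralizing in $F\Delta$. The rough dichotomy that organizes the proof is: a homomorphism $h:F\Gamma\to F\Delta$ either kills $M$ (and is then trivial) or, after post-composing with a suitable inner automorphism, becomes the identity on $M$ (and is then of the form $Ff$).

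For surjectivity onto $\Rep(F\Gamma,F\Delta)$, I will take an arbitrary $h:F\Gamma\to F\Delta$ and split on whether $h|_M$ is trivial. If it is, Lemma \ref{lemma-prop-trivial} gives $h=1$ and $[h]$ is the image of $\ast$. Otherwise, Lemma \ref{lemma-prop-m} provides $g\in F\Delta$ such that $c_gh$ restricts to $\id_M$, and then Lemma \ref{lemma-prop-homomorphism-induced} produces a (unique) $f:\Gamma\to\Delta$ with $c_gh=Ff$; hence $[h]=[Ff]$ is the image of $f$.

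The main obstacle is injectivity, and in particular I expect the technical heart of the proof to be verifying that the centralizer $C_{F\Delta}(M)$ is trivial. Once this is known, the rest is easy: the class of $\ast$ cannot equal any $[Ff]$ because $Ff|_M$ is the canonical inclusion $M\hookrightarrow F\Delta$ (coming from $F\emptyset\to F\Gamma\to F\Delta$) and is therefore nontrivial; and if $[Ff]=[Ff']$, then $Ff=c_g\circ Ff'$ for some $g\in F\Delta$, whence restriction to $M$ yields $c_g|_M=\id_M$, so $g\in C_{F\Delta}(M)=1$, so $Ff=Ff'$, and the uniqueness in Lemma \ref{lemma-prop-homomorphism-induced} forces $f=f'$.

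To prove $C_{F\Delta}(M)=1$, I will use the action of $F\Delta$ on the u-graph $X$ supplied by Lemma \ref{lemma-prop-serre-graph}. Let $v_0\in\vert A\Delta\subseteq X$ be the vertex whose vertex group is $M$; by Lemma \ref{lemma-prop-serre-graph}(b) its stabilizer is exactly $M$. If $g\in C_{F\Delta}(M)$, then the stabilizer of $gv_0$ is $gMg^{-1}=M$, so $M$ stabilizes both $v_0$ and $gv_0$. Were $gv_0\neq v_0$, Lemma \ref{lemma-prop-stable-path} would force $M$ to stabilize some edge incident to $v_0$; but by Remark \ref{remark-prop-serre-graph-stabilizers} any such edge stabilizer is conjugate to the edge group of type $N\cong\mathbb{Z}_{11}\rtimes\mathbb{Z}_5$, which is a proper subgroup of $M=M_{23}$ -- contradiction. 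Hence $gv_0=v_0$, so $g\in M$, and then $g\in Z(M)=1$ by Condition \condsout. Functoriality in $\Gamma$ and $\Delta$ is then immediate from the functoriality of $F$ and of $\Rep(-,-)$.
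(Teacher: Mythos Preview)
Your argument is correct and uses the same three lemmas the paper cites (Lemmas \ref{lemma-prop-trivial}, \ref{lemma-prop-m}, \ref{lemma-prop-homomorphism-induced}); the paper's own proof is the single sentence ``immediate from'' those lemmas. You go beyond this by isolating and proving the step $C_{F\Delta}(M)=1$, which is genuinely needed for injectivity (if $Ff'=c_gFf$ then $g$ centralizes $M$, and one must rule out nontrivial such $g$) but which the paper leaves unspoken. Your proof of this via the $F\Delta$-action on $X$ and Lemma \ref{lemma-prop-stable-path} is correct. One small point: when you derive the contradiction you invoke the concrete groups $M_{23}$ and $\mathbb{Z}_{11}\rtimes\mathbb{Z}_5$ from Lemma \ref{lemma-exists}, but the theorem is stated for any diagram \eqref{equation-circle} satisfying C1--C8; it is cleaner to observe that the stabilizer of any edge of $X$ incident to $v_0$ is an $M$-conjugate of some $N_v\cong N$, and $N$ is a proper subgroup of $M$ because $N$ embeds in $P_0$ whereas Condition \condmtop\ forbids any nontrivial homomorphism $M\to P_0$.
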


\begin{proof}
  This is immediate from Lemmas
  \ref{lemma-prop-trivial}, \ref{lemma-prop-m} and
  \ref{lemma-prop-homomorphism-induced}.
\end{proof}

Let $\ohom(A,B)$ denote the set of nontrivial homomorphisms from
$A$ to $B$.

\rem \mylabel{remark-prop-hom-functor} $\ohom(F\Gamma,F\Delta)$ is
functorial in $\Gamma$ and $\Delta$ since $\Hom(F\Gamma,F\Delta)$
is and Lemmas \ref{lemma-prop-m} and \ref{lemma-prop-trivial}
imply that if $f:F\Gamma\to F\Delta$ and $h:F\Delta\to F\Phi$ are
nontrivial homomorphisms then $hf$ is also nontrivial.

\rem \mylabel{remark-prop-decomposition}
Note that $\Hom(\emptyset,\Delta)=\Hom_{\g}(\emptyset,\Delta)$ is
a point.
Lemmas \ref{lemma-prop-m}
and \ref{lemma-prop-homomorphism-induced} imply that for every
$f:\Hom(\emptyset,\Delta)\to\ohom(F\emptyset,F\Delta)$ we have a
pullback diagram:
$$
\xymatrix{
 \Hom(\Gamma,\Delta) \ar[r]\ar[d] & \ohom(F\Gamma,F\Delta) \ar[d]
 \\
 \Hom(\emptyset,\Delta) \ar[r]^(0.45)f &
 \ohom(F\emptyset,F\Delta)
}
$$
That is,
$$
  \ohom(F\Gamma,F\Delta)\cong\ohom(F\emptyset,F\Delta)\times\Hom(\Gamma,\Delta).
$$
The following theorem puts together Remarks
\ref{remark-prop-hom-functor} and \ref{remark-prop-decomposition}.

\begin{theorem}
\mylabel{theorem-main-2}
  For m-graphs $\Gamma$ and $\Delta$ we have a bijection
  $$
    \Hom(F\Gamma,F\Delta)\cong\ohom(F\emptyset,F\Delta)\times\Hom(\Gamma,\Delta)\cup\{*\},
  $$
  which is functorial in $\Gamma$ and $\Delta$. The $*$
  corresponds to the trivial homomorphism. A nontrivial homomorphism $h:F\Gamma\to F\Delta$
  corresponds to a pair $h|{}_{F\emptyset}$ and $f:\Gamma\to\Delta$ such that $Ff=h$.
\end{theorem}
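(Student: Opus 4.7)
The plan is to assemble the theorem from Remarks \ref{remark-prop-hom-functor} and \ref{remark-prop-decomposition}, which together already carry essentially all of the content. First I would partition $\Hom(F\Gamma,F\Delta)$ into the trivial homomorphism and the set of nontrivial ones, $\ohom(F\Gamma,F\Delta)$; by Lemma \ref{lemma-prop-trivial} this split is canonical (a homomorphism is trivial iff its restriction to $M=F\emptyset$ is), and the trivial element accounts for the $\{*\}$ summand on the right-hand side.

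Next, since $\Hom_{\mg}(\emptyset,\Delta)$ is a one-point set, the pullback square of Remark \ref{remark-prop-decomposition} identifies, for any chosen element $\psi\in\ohom(F\emptyset,F\Delta)$, the fiber of the restriction map $\ohom(F\Gamma,F\Delta)\to\ohom(F\emptyset,F\Delta)$ over $\psi$ with $\Hom_{\mg}(\Gamma,\Delta)$. All fibers therefore have the same size, and one obtains the product decomposition
$$\ohom(F\Gamma,F\Delta)\;\cong\;\ohom(F\emptyset,F\Delta)\times\Hom_{\mg}(\Gamma,\Delta).$$
Explicitly, to a nontrivial $h$ I would first attach $h|_{F\emptyset}\in\ohom(F\emptyset,F\Delta)$; then Lemma \ref{lemma-prop-m} furnishes $g\in F\Delta$ with $c_g\circ h|_{M}=\id_M$, and Lemma \ref{lemma-prop-homomorphism-induced} supplies the unique $f:\Gamma\to\Delta$ satisfying $Ff=c_g\circ h$. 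The associated pair is $(h|_{F\emptyset},f)$. Combined with the split from the previous paragraph, this gives the asserted bijection, with $*$ corresponding to the trivial homomorphism.

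For the functoriality clause, $\Hom(F(-),F(-))$ and $\Hom_{\mg}(-,-)$ are obviously bifunctorial, and Remark \ref{remark-prop-hom-functor} records that $\ohom(F(-),F(-))$ is too, relying on Lemmas \ref{lemma-prop-m} and \ref{lemma-prop-trivial} to guarantee that the composition of nontrivial homomorphisms remains nontrivial. The only point I would verify explicitly is that the product decomposition above is natural in $\Gamma$ and $\Delta$; this becomes transparent once it is viewed through the pullback square of Remark \ref{remark-prop-decomposition}, which is formulated without reference to the auxiliary element $g$. The one place where care is required, and the main (minor) obstacle, is that the conjugator $g$ of Lemma \ref{lemma-prop-m} is not canonical, so I would phrase every step in terms of manifestly canonical invariants such as $h|_{F\emptyset}$ rather than the conjugated map $c_g\circ h$, guaranteeing that the bijection is well-defined and natural.
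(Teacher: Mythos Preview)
Your proposal is correct and follows exactly the paper's route: the paper's entire proof is the one-line observation that the theorem ``puts together Remarks \ref{remark-prop-hom-functor} and \ref{remark-prop-decomposition},'' and you unpack precisely those two remarks (together with the supporting Lemmas \ref{lemma-prop-trivial}, \ref{lemma-prop-m}, \ref{lemma-prop-homomorphism-induced}) in the way intended. Your flagging of the non-canonical conjugator $g$ is a reasonable caution, but note that it is in fact uniquely determined because $M$ has trivial centralizer in $F\Delta$ (a consequence of C1, C2 and Lemma \ref{lemma-prop-stable-path}), so the pair $(h|_{F\emptyset},f)$ is already canonical and no further repackaging is needed.
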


\section{Colimits and limits}
\mylabel{section-colimits}
In this section we prove that the functor $F$ preserves directed colimits and
countably codirected limits.

We say that a poset $X$ is {\em directed (resp. countably directed)} if any finite
subset (resp. any countable subset) of $X$ has an upper bound in $X$. A poset is viewed
as a category where $a\leq b$ corresponds to a morphism $a\to b$.
A diagram (i.e. functor) $\Gamma:X\to\mathcal{C}$ and its colimit $\colim\Gamma$
are called {\em directed} if $X$ is directed. A diagram $\Gamma$ and its limit
$\lim\Gamma$ are called {\em countably codirected} if the opposite category
$X^{op}$ is countably directed.

The results of this section are stated and proved for (countably) directed
diagrams, but \cite[Theorem 1.5]{adamek-rosicky} and \cite[Remark 1.21]{adamek-rosicky}
yield immediate generalizations to the (countably) filtered case.

In this article we use Remark \ref{remark-lim-directed-colimit}
only; the remainder of this section is provided for the sake of completeness.

\intertitle{Colimits} We have noticed in Remark
\ref{remark-f-colimits} that $F:\mg\to\groups$ preserves colimits
of connected diagrams. Since the inclusion functor $I:\g\to\mg$
preserves directed colimits we obtain \rem
\mylabel{remark-lim-directed-colimit} The composition
$FI:\g\to\groups$ preserves directed colimits.

\intertitle{Limits}

The inclusion functor $I$ preserves all
limits. We investigate preservation of limits by $F$.

\begin{lemma}
\mylabel{lemma-lim-finite-intersection}
  If $\Gamma_1$ and $\Gamma_2$ are subgraphs of an m-graph
  $\Gamma$ then $F(\Gamma_1\cap\Gamma_2)=F\Gamma_1\cap
  F\Gamma_2$.
\end{lemma}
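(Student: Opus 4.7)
My plan for the forward inclusion $F(\Gamma_1\cap\Gamma_2)\subseteq F\Gamma_1\cap F\Gamma_2$ is to cite Lemma \ref{lemma-prop-subgraph-subgroup} applied to the inclusions $\Gamma_1\cap\Gamma_2\subseteq \Gamma_i$ together with functoriality of $F$. For the reverse inclusion, I first reduce to the case $\Gamma=\Gamma_1\cup\Gamma_2$: by Lemma \ref{lemma-prop-subgraph-subgroup}, $F(\Gamma_1\cup\Gamma_2)$ embeds in $F\Gamma$ and contains both $F\Gamma_i$, so the intersection is the same whether computed in $F\Gamma$ or in $F(\Gamma_1\cup\Gamma_2)$.

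The main idea is then to realize $F(\Gamma_1\cup\Gamma_2)$ as the amalgamated free product
$$F(\Gamma_1\cup\Gamma_2)=F\Gamma_1 *_{F(\Gamma_1\cap\Gamma_2)} F\Gamma_2$$
and invoke Remark \ref{remark-bs-t}(d) applied to the two-vertex tree of groups whose vertex groups are $F\Gamma_1$, $F\Gamma_2$ and whose edge group is $F(\Gamma_1\cap\Gamma_2)$; that remark yields $G_v\cap G_w=G_e$ inside the colimit, which is precisely the desired equality. To justify the amalgam itself, the crucial observation is that the construction $\Gamma\mapsto A\Gamma$ always attaches a central vertex $M$ connected to every $P_{0,v}$, so even when $\Gamma_1\cap\Gamma_2=\emptyset$ the pushout diagram $A\Gamma_1\leftarrow A(\Gamma_1\cap\Gamma_2)\to A\Gamma_2$ in $\ug$ is connected (with $A(\emptyset)=\{M\}$). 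Its pushout in $\ug$ is $A(\Gamma_1\cup\Gamma_2)$, and this lifts to a connected pushout in the category of graphs of groups. Passing to colimits, and using Lemma \ref{lemma-prop-subgraph-subgroup} for the injectivity of the structure maps $F(\Gamma_1\cap\Gamma_2)\hookrightarrow F\Gamma_i$, one obtains the amalgamated free product decomposition above.

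The one subtlety I expect to be the main obstacle is the case $\Gamma_1\cap\Gamma_2=\emptyset$, where the pushout diagram in $\mg$ itself is disconnected and $F$ is only known to preserve colimits of connected diagrams; the way around this is to perform the colimit computation in $\ug$ (or in the category of graphs of groups) after applying $A$, since the vertex $M$ keeps everything connected and $F(\emptyset)=M$ fits seamlessly as the amalgamating subgroup. Once the amalgam is in hand, the conclusion is a direct application of classical Bass-Serre theory via Remark \ref{remark-bs-t}(d).
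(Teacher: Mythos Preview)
Your proposal is correct and follows the same route as the paper: reduce to $\Gamma=\Gamma_1\cup\Gamma_2$, identify $F(\Gamma_1\cup\Gamma_2)$ with the amalgam $F\Gamma_1*_{F(\Gamma_1\cap\Gamma_2)}F\Gamma_2$, and then invoke Remark~\ref{remark-bs-t}(d). The paper obtains the amalgam in one line from Remark~\ref{remark-f-colimits}, and your worry about $\Gamma_1\cap\Gamma_2=\emptyset$ is unnecessary: ``connected diagram'' refers to the index category, and a span $\bullet\leftarrow\bullet\to\bullet$ is connected regardless of the values, so Remark~\ref{remark-f-colimits} applies directly (with $F\emptyset=M$ as the amalgamating subgroup) without passing through $A\Gamma$ by hand.
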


\begin{proof}
  Lemma \ref{lemma-prop-subgraph-subgroup} implies that the
  statement of the lemma makes sense. Since
  $\Gamma_1\cup\Gamma_2=\colim(\Gamma_1\supseteq\Gamma_1\cap
  \Gamma_2\subseteq\Gamma_2)$ Remark \ref{remark-f-colimits}
  implies that $F(\Gamma_1\cup\Gamma_2)=
  F\Gamma_1*_{F(\Gamma_1\cap\Gamma_2)}F\Gamma_2$
  hence the result follows from Remark \ref{remark-bs-t}(d).
\end{proof}

\begin{lemma}
\mylabel{lemma-lim-w-filtered-finite}
  If $\{\Gamma_\alpha\}_{\alpha\in A}$ is a countably codirected
  diagram of finite m-graphs then there exist $\alpha_0$ and
  $\beta$ in $A$ such that
  \begin{itemize}
    \item[(a)] the projection
    $p_0:\lim\Gamma_\alpha\to\Gamma_{\alpha_0}$ is injective,
    \item[(b)] the images of $p_0$ and
    $p^{\beta}_{\alpha_0}:\Gamma_\beta\to\Gamma_{\alpha_0}$
    coincide.
  \end{itemize}
\end{lemma}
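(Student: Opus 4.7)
The plan is to reduce the statement to the analogous fact about countably codirected limits of finite sets, treating the vertex and edge components separately and then combining via codirectedness. Limits in $\mg$ are computed componentwise on $\vert$ and $\edge$, and both injectivity of a projection and coincidence of its image with a transition's image propagate to further refinements, so separate solutions for vertices and edges can be merged by taking common refinements in $A$. Write $L = \lim \Gamma_\alpha$, $p_\alpha \colon L \to \Gamma_\alpha$ for the projections, $p^\beta_\alpha \colon \Gamma_\beta \to \Gamma_\alpha$ for the transitions, and let $S_\alpha$ denote one of $\vert \Gamma_\alpha$ or $\edge \Gamma_\alpha$, with $S = \lim S_\alpha$.

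The pivotal preliminary is that $|p_\alpha(S)|$ is bounded as $\alpha$ varies in $A$: otherwise, indices $\alpha_n$ with $|p_{\alpha_n}(S)| \geq n$ would, by countable codirectedness, admit a common refinement $\alpha^*$ factoring every $p_{\alpha_n}$, and the surjections $p_{\alpha^*}(S) \twoheadrightarrow p_{\alpha_n}(S)$ would force $|S_{\alpha^*}| \geq n$ for every $n$, contradicting finiteness of $S_{\alpha^*}$. For (a), I would then pick $\alpha_0$ maximizing $|p_{\alpha_0}(S)|$: for any refinement $\beta$ of $\alpha_0$, maximality forces $p^\beta_{\alpha_0} \colon p_\beta(S) \twoheadrightarrow p_{\alpha_0}(S)$ to be a bijection between finite sets of equal size, so $p_{\alpha_0}(x) = p_{\alpha_0}(y)$ implies $p_\beta(x) = p_\beta(y)$ for every such $\beta$. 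Since codirectedness supplies a common refinement of $\alpha_0$ and any given $\gamma \in A$, this propagates to $p_\gamma(x) = p_\gamma(y)$ for all $\gamma$, giving $x = y$.

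For (b), set $S^\infty_{\alpha_0} = \bigcap_\gamma p^\gamma_{\alpha_0}(S_\gamma)$ with $\gamma$ ranging over refinements of $\alpha_0$. This family is filtered under reverse inclusion (codirectedness plus functoriality of transitions) and lies in the finite set $S_{\alpha_0}$, so its intersection is attained at some $\beta$: $S^\infty_{\alpha_0} = p^\beta_{\alpha_0}(S_\beta)$. It then remains to show $p_0(S) = S^\infty_{\alpha_0}$; only $\supseteq$ is substantive, and my plan is a Mittag-Leffler / Tychonoff argument. Each $S_\alpha$ is a finite discrete space, so $\prod_\alpha S_\alpha$ is compact. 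For $s \in S^\infty_{\alpha_0}$, the set $S \cap p_{\alpha_0}^{-1}(s)$ appears inside this product as the intersection of the closed compatibility relations $\{x : p^\delta_\gamma(x_\delta) = x_\gamma\}$ with the closed cylinder $\{x : p_{\alpha_0}(x) = s\}$. Any finite subfamily is satisfiable by taking a common refinement $\beta^*$ of the finitely many indices involved and of $\alpha_0$, picking $y \in (p^{\beta^*}_{\alpha_0})^{-1}(s)$ (non-empty by the definition of $S^\infty_{\alpha_0}$), and pushing $y$ along the transitions from $\beta^*$; compactness then delivers a global solution.

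The main obstacle is this final compactness step: it is the only place where both countable codirectedness (via the preliminary boundedness argument) and finiteness of the $S_\alpha$ enter essentially. The remaining ingredients --- stabilization of a filtered decreasing family inside a finite set, and the componentwise passage between sets and m-graphs --- are routine.
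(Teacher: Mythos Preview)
Your proposal is correct. Both your argument and the paper's rest on the same underlying fact---that a codirected system of finite sets is ``compact''---but the two proofs organise this differently.

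For (a), the paper proceeds more directly from the definition of the limit: any two distinct points of $\lim\Gamma_\alpha$ are separated at some index, so for any countable subset $S$ of the limit one applies countable codirectedness to the at most countably many separating indices and obtains a single $\alpha_0$ at which $p_0$ is injective on $S$; finiteness of $\Gamma_{\alpha_0}$ then forces the limit itself to be finite, and one repeats with $S$ equal to the whole limit. Your route---bounding $|p_\alpha(S)|$ via a countable-cofinality argument and then picking a maximiser---is a pleasant alternative that avoids the two-pass structure but uses a slightly less direct first step.

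For (b), the paper argues by contradiction: with $\alpha_0$ fixed, if no $\beta$ satisfies $\im p^\beta_{\alpha_0}=\im p_0$, then the preimages $K_\beta=(p^\beta_{\alpha_0})^{-1}(\im p^\beta_{\alpha_0}\setminus\im p_0)$ form a codirected system of nonempty finite sets whose limit must be nonempty yet sits inside $\lim\Gamma_\alpha$ while projecting outside $\im p_0$. Your Mittag--Leffler/Tychonoff argument is the direct version of the same compactness phenomenon: you first isolate the eventual image $S^\infty_{\alpha_0}$ by finiteness, then lift each of its points to the limit by the finite intersection property. The paper's packaging is a touch shorter; yours makes the compactness step more explicit.

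One minor point: your separation into vertex and edge components, followed by merging via common refinements, is correct but unnecessary. The paper simply works with the set of all objects of the m-graphs, and the entire argument is purely set-theoretic at that level.
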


\begin{proof}
  If $S$ is a set of objects in $\Gamma=\lim\Gamma_\alpha$ then
  for any pair $s\neq t$ in $S$ there exists $\alpha_{s,t}$ in $A$ such
  that the projection $p_{s,t}:\Gamma\to\Gamma_{\alpha_{s,t}}$ is
  injective on $\{s,t\}$. If $S$ is at most countable then there
  exists $\alpha_0$ such that each $p_{s,t}$ factors through
  $p_0:\Gamma\to\Gamma_{\alpha_0}$, hence $p_0$ is injective on
  $S$. But $\Gamma_{\alpha_0}$ is finite, hence
  $\Gamma$ is finite, and by taking $S$ to be the set of
  objects of $\Gamma$ we complete
  the proof of (a).

  If $B=\{\beta\in A\mid \beta\to\alpha_0\}$ then $\lim_{\alpha\in
  A}\Gamma_\alpha\to\lim_{\beta \in
  B}\Gamma_\beta$ is an isomorphism. Clearly $\im p_0\subseteq\im
  p^\beta_{\alpha_0}$ for $\beta\in B$. Let
  $K_\beta=(p^\beta_{\alpha_0})^{-1}(\im
  p^\beta_{\alpha_0}\setminus\im p_0)$ be viewed as a set of
  objects. If each $K_\beta$ is nonempty then, as a codirected limit of
  finite sets, $\lim K_\beta$ is nonempty, which is a contradiction
  since $\lim K_\beta\subseteq \lim\Gamma_\beta$ and
  $p_0(\lim K_\beta)\cap p_0(\lim\Gamma_\beta)=\emptyset$.
\end{proof}

\begin{lemma}
\mylabel{lemma-lim-w-filtered-closure}
  If $\{\Gamma_\alpha\}_{\alpha\in A}$ is a countably codirected
  diagram of m-graphs and $\Delta_\alpha\subseteq\Gamma_\alpha$ are
  finite subgraphs such that for all structure maps
  $p^\beta_\alpha:\Gamma_\beta\to\Gamma_\alpha$ we have
  $\Delta_\alpha\subseteq p^\beta_\alpha(\Delta_\beta)$ then
  there exist finite subgraphs
  $\overline{\Delta}_\alpha\subseteq\Gamma_\alpha$ such that
  $\Delta_\alpha\subseteq\overline{\Delta}_\alpha$ for all $\alpha$
  and $\{\overline{\Delta}_\alpha\}_{\alpha\in A}$ is a diagram, that is,
  $p^\beta_\alpha(\overline{\Delta}_\beta)\subseteq\overline{\Delta}_\alpha$.
\end{lemma}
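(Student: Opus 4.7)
The plan is to define
\[
\overline{\Delta}_\alpha \;=\; \bigcup_{\beta \to \alpha}\, p^\beta_\alpha(\Delta_\beta),
\]
where the union is taken over all morphisms $\beta \to \alpha$ in the indexing category $A$ (including the identity $\alpha \to \alpha$, which already forces $\Delta_\alpha \subseteq \overline{\Delta}_\alpha$). Three of the required properties then follow routinely from the definition: (i) $\overline{\Delta}_\alpha$ is a sub-m-graph of $\Gamma_\alpha$, since each $p^\beta_\alpha(\Delta_\beta)$ is and unions of sub-m-graphs are sub-m-graphs; (ii) the containment $\Delta_\alpha \subseteq \overline{\Delta}_\alpha$, by the remark just made; and (iii) the diagram compatibility $p^\beta_\alpha(\overline{\Delta}_\beta) \subseteq \overline{\Delta}_\alpha$, obtained by expanding the inner union and using the functorial identity $p^\beta_\alpha \circ p^\gamma_\beta = p^\gamma_\alpha$ together with the fact that $\gamma \to \beta \to \alpha$ composes to $\gamma \to \alpha$.

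The substantive step — and the main obstacle — is to verify that $\overline{\Delta}_\alpha$ is finite. This is precisely where the hypothesis of \emph{countable} codirectedness is essential. Mere finite codirectedness is not enough: the tower $\mathbb{N} \leftarrow \mathbb{N} \leftarrow \cdots$ of identities with $\Delta_n = \{0, 1, \ldots, n\}$ satisfies the inclusion hypothesis but admits no finite closure, and the reason the argument below breaks in that example is exactly that the countable set $\{n\}_{n \in \mathbb{N}}$ has no common lower bound in the (merely finitely codirected) index.

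The plan for this finiteness step is by contradiction. If $\overline{\Delta}_\alpha$ were infinite, one could choose a countably infinite sequence of distinct elements $x_1, x_2, \ldots \in \overline{\Delta}_\alpha$, each of the form $x_i = p^{\beta_i}_\alpha(y_i)$ with $y_i \in \Delta_{\beta_i}$. Applying countable codirectedness to the countable set $\{\alpha, \beta_1, \beta_2, \ldots\} \subseteq A$ produces a single $\beta_\infty \in A$ admitting morphisms $\beta_\infty \to \alpha$ and $\beta_\infty \to \beta_i$ for every $i$. The hypothesis of the lemma, applied to each $\beta_\infty \to \beta_i$, gives $\Delta_{\beta_i} \subseteq p^{\beta_\infty}_{\beta_i}(\Delta_{\beta_\infty})$; composing with $p^{\beta_i}_\alpha$ and using functoriality yields
\[
p^{\beta_\infty}_\alpha(\Delta_{\beta_\infty}) \;=\; p^{\beta_i}_\alpha\!\left(p^{\beta_\infty}_{\beta_i}(\Delta_{\beta_\infty})\right) \;\supseteq\; p^{\beta_i}_\alpha(\Delta_{\beta_i}) \;\ni\; x_i
\]
for every $i$. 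All the $x_i$ thus lie in the single finite set $p^{\beta_\infty}_\alpha(\Delta_{\beta_\infty})$, contradicting their being distinct and infinite in number. Hence $\overline{\Delta}_\alpha$ is finite, and the construction satisfies every clause of the lemma.
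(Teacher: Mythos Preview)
Your proof is correct and follows essentially the same approach as the paper: both define $\overline{\Delta}_\alpha$ as the union of the $p^\beta_\alpha(\Delta_\beta)$ over all $\beta\to\alpha$, note that only finiteness is at issue, and then derive a contradiction from an infinite sequence of elements by using countable codirectedness to find a single $\beta_\infty$ dominating all the $\beta_i$, so that all the chosen elements land in the finite image $p^{\beta_\infty}_\alpha(\Delta_{\beta_\infty})$. Your additional remark and counterexample showing why finite codirectedness alone would not suffice is a nice touch not present in the paper.
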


\begin{proof}
  Define $\overline{\Delta}_\alpha$ as the union of
  $p^\beta_\alpha(\Delta_\beta)$ over all structure maps $p^\beta_\alpha$ whose
  target is $\Gamma_\alpha$. Only the finiteness of
  $\overline{\Delta}_\alpha$ needs proof. Suppose that
  $S=\{s_0,s_1,\ldots\}$ is an infinite subset of objects in
  $\overline{\Delta}_\alpha$. Then there exist
  $\alpha_0,\alpha_1,\ldots$ such that $s_i\in
  p^{\alpha_i}_\alpha(\Delta_{\alpha_i})$ for $i\in\mathbb{N}$.
  Since $\{\Gamma_\alpha\}_{\alpha\in A}$ is countably codirected
  there exists $\alpha_*$ in $A$ such that $\Gamma_{\alpha_*}$ maps to
  every $\Gamma_{\alpha_i}$ for $i\in\mathbb{N}$, hence
  $\Delta_{\alpha_i}\subseteq p^{\alpha_*}_{\alpha_i}(\Delta_{\alpha_*})$ implies
  $p^{\alpha_i}_\alpha(\Delta_{\alpha_i})\subseteq
  p^{\alpha_*}_\alpha(\Delta_{\alpha_*})$ for $i\in\mathbb{N}$,
  which is a contradiction since $\Delta_{\alpha_*}$ is finite.
\end{proof}

\begin{proposition}
\mylabel{proposition-lim-w-filtered-preserves}
  The functor $F$ constructed in Section
  \ref{section-construction} preserves countably codirected limits.
\end{proposition}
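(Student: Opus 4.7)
The plan is to show that the canonical comparison map $\phi:F(\lim\Gamma_\alpha)\to\lim F\Gamma_\alpha$ is a bijection, handling injectivity and surjectivity separately. The common strategy is to reduce, via Lemma \ref{lemma-prop-finite-subgraph}, to a countably codirected sub-diagram of \emph{finite} m-graphs, where Lemma \ref{lemma-lim-w-filtered-finite} provides a decisive stabilization: the limit embeds into some $\overline{\Delta}_{\alpha_0}$ with image equal to the fixed subgraph $\Sigma=p^{\beta}_{\alpha_0}(\overline{\Delta}_\beta)$.

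For injectivity, take $g\in\ker\phi$ and use Lemma \ref{lemma-prop-finite-subgraph} to enclose it in $F\Gamma_0$ for a finite subgraph $\Gamma_0\subseteq\lim\Gamma_\alpha$. Setting $\Delta_\alpha=p_\alpha(\Gamma_0)$ yields a compatible system of finite subgraphs satisfying $p^{\beta}_\alpha(\Delta_\beta)=\Delta_\alpha$, and by Lemma \ref{lemma-prop-subgraph-subgroup} the vanishing $p_\alpha(g)=1$ already takes place inside $F\Delta_\alpha$. Lemma \ref{lemma-lim-w-filtered-finite} then furnishes some $\alpha_0$ for which $\lim\Delta_\alpha\to\Delta_{\alpha_0}$ is injective with image $p_{\alpha_0}(\Gamma_0)=\Delta_{\alpha_0}$, so it is an isomorphism; consequently $p_{\alpha_0}|_{\Gamma_0}:\Gamma_0\to\Delta_{\alpha_0}$ is an m-graph isomorphism, $Fp_{\alpha_0}|_{F\Gamma_0}$ is a group isomorphism, and $g=1$.

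For surjectivity, take $(g_\alpha)\in\lim F\Gamma_\alpha$. Because the collection of finite subgraphs $\Delta\subseteq\Gamma_\alpha$ with $g_\alpha\in F\Delta$ is closed under finite intersection by Lemma \ref{lemma-lim-finite-intersection}, a minimal such subgraph $s_\alpha$ exists. Minimality, combined with $g_\alpha=p^{\beta}_\alpha(g_\beta)\in F(p^{\beta}_\alpha(s_\beta))$, yields $s_\alpha\subseteq p^{\beta}_\alpha(s_\beta)$, the hypothesis of Lemma \ref{lemma-lim-w-filtered-closure}. That lemma produces finite $\overline{\Delta}_\alpha\supseteq s_\alpha$ forming a genuine sub-diagram. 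Applying Lemma \ref{lemma-lim-w-filtered-finite} to $\{\overline{\Delta}_\alpha\}$ produces $\alpha_0,\beta$ with $p_0:\lim\overline{\Delta}_\alpha\hookrightarrow\overline{\Delta}_{\alpha_0}$ of image $\Sigma=p^{\beta}_{\alpha_0}(\overline{\Delta}_\beta)$. For any $\gamma\geq\beta$ we have $g_{\alpha_0}=p^{\gamma}_{\alpha_0}(g_\gamma)\in F(p^{\gamma}_{\alpha_0}(\overline{\Delta}_\gamma))\subseteq F\Sigma$, so the isomorphism $\lim\overline{\Delta}_\alpha\cong\Sigma$ lifts $g_{\alpha_0}$ to some $g\in F(\lim\overline{\Delta}_\alpha)\subseteq F(\lim\Gamma_\alpha)$, the last inclusion being Lemma \ref{lemma-prop-subgraph-subgroup}.

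The delicate step, which I expect to be the main obstacle, is verifying that $p_\alpha(g)=g_\alpha$ holds for \emph{every} $\alpha$, not merely for $\alpha_0$. For a given $\alpha$ I would choose, by countable codirectedness, an index $\gamma$ with $\gamma\geq\alpha$ and $\gamma\geq\beta$ large enough that the projection $p_\gamma:\lim\overline{\Delta}_\alpha\to\overline{\Delta}_\gamma$ is itself injective and the stabilization analogous to Lemma \ref{lemma-lim-w-filtered-finite}(b) holds at $\gamma$; then $p^{\gamma}_{\alpha_0}$ restricts to an isomorphism between $F(p_\gamma(\lim\overline{\Delta}_\alpha))$ and $F\Sigma$. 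A second application of the minimal-support argument shows that $g_\gamma$ lies in $F(p_\gamma(\lim\overline{\Delta}_\alpha))$; since $p_\gamma(g)$ obviously does as well, and both map to $g_{\alpha_0}$ in $F\Sigma$, they coincide, and applying $p^{\gamma}_\alpha$ gives the desired equality at $\alpha$.
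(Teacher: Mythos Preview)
Your argument is correct and follows the same route as the paper: both halves reduce to a countably codirected system of finite sub-m-graphs via Lemmas \ref{lemma-prop-finite-subgraph}, \ref{lemma-lim-finite-intersection}, \ref{lemma-lim-w-filtered-closure} and \ref{lemma-lim-w-filtered-finite}, and the candidate preimage is produced by inverting the isomorphism $Fp_0:F(\lim\overline{\Delta}_\alpha)\to F\Sigma$. The only difference is the endgame of surjectivity: the paper, rather than checking $p_\alpha(g)=g_\alpha$ coordinate by coordinate, shows that the auxiliary map $q_0:\lim F\overline{\Delta}_\alpha\to F\Sigma$ is a bijection (surjectivity is immediate; injectivity is obtained by assuming a nontrivial $k\in\ker q_0$, passing to some $\alpha_1$ where $k$ survives, and noting that the resulting $F\Gamma_1\to F\Gamma_0$ is an isomorphism, forcing $\ker q_1=\ker q_0$, a contradiction). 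This gives the stronger statement that the comparison map for the whole finite sub-diagram is an isomorphism, at the cost of one extra diagram.

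One point to make explicit in your version: the phrase ``choose $\gamma$ large enough that the stabilization analogous to Lemma \ref{lemma-lim-w-filtered-finite}(b) holds at $\gamma$'' is not furnished directly by that lemma, which only guarantees stabilization at \emph{some} index. What you need is to re-apply Lemma \ref{lemma-lim-w-filtered-finite} to the cofinal (still countably codirected, same limit) subsystem of indices dominating $\alpha$ and $\beta$; the $\alpha_0$ it returns is then your $\gamma$. The paper's ``as above'' for $\alpha_1$ hides an analogous step.
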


\begin{proof}
  Let $\{\Gamma_\alpha\}_{\alpha\in A}$ be a countably codirected
  diagram of m-graphs. We obtain an extended diagram
  \begin{equation}
  \mylabel{equation-lim-h}
  \xymatrix{
    {\{F\Gamma_\alpha\}}_{\alpha\in A} &
      {F\lim\Gamma_\alpha} \ar[l] \ar[dl]^h \\
    {\lim F\Gamma_\alpha} \ar[u]
  }
  \end{equation}
  where $h$ comes from the universal property of the limit. We need to
  prove that $h$ is a bijection.

  {\em Injectivity of $h$.} Let $g$ be a nonidentity element of
  $F\lim\Gamma_\alpha$. Lemma \ref{lemma-prop-finite-subgraph}
  implies the existence of a finite subgraph
  $\Gamma_0\subseteq\lim\Gamma_\alpha$ such that $g\in
  F\Gamma_0$. We look at the diagram formed by
  the images of $\Gamma_0$ in
  $\Gamma_\alpha$ for $\alpha \in A$,
  and by Lemma \ref{lemma-lim-w-filtered-finite}(a)
  we obtain $\alpha_0$ such that $\Gamma_0$ maps injectively to
  $\Gamma_{\alpha_0}$; hence Lemma
  \ref{lemma-prop-subgraph-subgroup} implies that $F\Gamma_0\to
  F\Gamma_{\alpha_0}$ is one-to-one and therefore $h(g)$ is
  nontrivial, which proves the injectivity of~$h$.

  {\em Surjectivity of $h$.} Let $g\in\lim F\Gamma_\alpha$ and let
  $g_\alpha$ be the image of $g$ in $F\Gamma_\alpha$. Let
  $\Gamma^g_\alpha\subseteq\Gamma_\alpha$ be a finite subgraph
  such that $g_\alpha\in F\Gamma^g_\alpha$ for $\alpha\in A$. Lemma
  \ref{lemma-lim-finite-intersection} implies that we may require
  $\Gamma^g_\alpha$ to be the smallest subgraph with $g_\alpha\in
  F\Gamma^g_\alpha$. The minimality implies that
  $\Gamma^g_\alpha\subseteq p^\beta_\alpha(\Gamma^g_\beta)$ for
  all structure maps $p^\beta_\alpha$, hence by Lemma
  \ref{lemma-lim-w-filtered-closure} we obtain a diagram
  $\{\overline{\Gamma}^g_\alpha\}_{\alpha\in A}$ of finite subgraphs such that
  $\Gamma^g_\alpha\subseteq\overline{\Gamma}^g_\alpha\subseteq\Gamma_\alpha$.

  Lemma \ref{lemma-lim-w-filtered-finite}(a) gives us $\alpha_0$ such
  that $p_0:\lim\overline{\Gamma}^g_\alpha\to
  \overline{\Gamma}^g_{\alpha_0}\subseteq\Gamma_{\alpha_0}$ is injective. Let $\Gamma_0$ be
  the image of $p_0$. We put the above into the following diagram,
  which is a modification of \eqref{equation-lim-h}.
  \begin{equation}
  \mylabel{equation-lim-h2}
  \xymatrix{
    {g_{\alpha_0}} \ar@{}[r]|(0.4)*+{\in} &
      {F\Gamma_0} \ar@{}[dr]|(0.44){\rotatebox{-35}{$\subseteq$}} &&&
      {F\lim\overline{\Gamma}^g_\alpha} \ar[ddlll]^{h_0}
        \ar[lll]^{Fp_0}_{\cong}  \\
    && {} \ar@{}[ul]|(0.2)*+{F\Gamma_{\alpha_0}}="name" &&
      {} \ar@{}[u]|(0.2)*+{F\lim\Gamma_\alpha}="right"  \\
    g \ar@{|->}[uu] \ar@{}[r]|(0.4)*+{\in} &
      {\lim F\overline{\Gamma}^g_\alpha}
      \ar[uu]^{q_0} &
      {\ \ \ \ \ \ \ \ }
      \ar@{}[l]|(0.2)*+{\lim F\Gamma_\alpha}="down"
      \ar@{} [uurr];"right" |{\rotatebox{-90}{$\subseteq$}}
      \ar@{} [l];"down" |*+{\subseteq}
      \ar "right";"name" |(0.585)*+{\ \ \ }
      \ar "right";"down"^h
      \ar "down";"name"|(0.535)*+{\ \ \ }
  }
  \end{equation}
  One easily deduces from Lemma \ref{lemma-lim-w-filtered-finite}(b) that
  the image of $\lim F\overline{\Gamma}_\alpha^g$ in
  $F\Gamma_{\alpha_0}$ is contained in $F\Gamma_0$,
  hence $q_0$ is well defined. $Fp_0$ is an isomorphism since $p_0$
  is an isomorphism, and therefore $q_0$ is onto.

  To complete the proof it is enough to show that $q_0$ is one-to-one.
  Suppose that $\ker q_0$ contains a nonidentity element $k$. Then
  we have a structure map $\Gamma_{\alpha_1}\to\Gamma_{\alpha_0}$
  such that $k$ is not in the kernel of
  $\lim F\overline{\Gamma}^g_\alpha\to F\Gamma_{\alpha_1}$. As
  above,
  $p_1:\lim\overline{\Gamma}^g_\alpha\to\overline{\Gamma}^g_{\alpha_1}$
  is injective and if $\Gamma_1=\im p_1$ then the image of
  $\lim F\overline{\Gamma}^g_\alpha$ in $F\Gamma_{\alpha_1}$
  is contained in $F\Gamma_1$. We obtain a modification of
  \eqref{equation-lim-h2}:
  \begin{equation}
    \xymatrix{
      {F\Gamma_0} \ar@{}[dr]|{\rotatebox{-40}{$\subseteq$}} &&
      {F\lim\overline{\Gamma}^g_\alpha} \ar[ll]_{\cong}^{Fp_0}
      \ar[dll]_(0.57){\cong}^(0.37){Fp_1=q_1h_0} \\
      {F\Gamma_1} \ar[u] \ar@{}[dr]|{\rotatebox{-40}{$\subseteq$}} &
      {F\Gamma_{\alpha_0}} \\
      & {F\Gamma_{\alpha_1}} \ar[u] \\
      {\lim F\overline{\Gamma}^g_\alpha} \ar[uu]^{q_1}
      \ar@/^2pc/[uuu]^{q_0}
    }
  \end{equation}
  and $k\in\ker q_0\setminus\ker q_1$, which is a contradiction,
  since $p_1:\lim\overline{\Gamma}^g_\alpha\to\Gamma_1$ is an isomorphism.
\end{proof}

\rem The functor $F$ does not preserve codirected limits:
Let $\Gamma_n=\mathbb{N}$ for positive integers $n$.
For $n<m$ define $p^m_n:\Gamma_m\to\Gamma_n$ as
$p^m_n(k)=\max\{0,k-(m-n)\}$. Then it is easy to see that
$\lim\Gamma_n$ is countable while $\lim F\Gamma_n$ is uncountable.

\section{Approximations of groups by graphs}
\mylabel{section-approximations}

\begin{proposition}
\mylabel{proposition-approx}
  Let $G$ be a group and $M=F\emptyset$ be as in
  Section~\ref{section-construction}.
  For every inclusion $i:M\to G$ there
  exists an m-graph $Ci$ and a diagram
  $$
  \xymatrix{
    F\emptyset \ar[r]^{\subseteq} \ar@{=}[d]&
    FCi \ar[d]_a \\
    M \ar[r]^i &
    G
  }
  $$
  such that for every m-graph $\Gamma$ and $f$ as below
  $$
  \xymatrix{
    {F\emptyset} \ar@{=}[d] \ar[r]^{\subseteq} &
      FCi \ar[d]_a \\
    M \ar@{=}[d] \ar[r]^i &
      G \\
    F\emptyset \ar[r]^{\subseteq} &
      F\Gamma \ar@/_2pc/@{-->}[uu]_{F\overline{f}} \ar[u]^f
  }
  $$
  there exists a unique $\overline{f}:\Gamma\to Ci$ for which the
  diagram above commutes.
\end{proposition}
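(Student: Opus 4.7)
The plan is to build $Ci$ by hand as the m-graph that records the local data that any homomorphism $F\Gamma\to G$ extending $i$ on $M$ must specify on the vertex groups of $G\Gamma$. Using the standard isomorphisms of Section~\ref{section-construction}, I would set
\[
\vert Ci = \{\phi:P_0 \to G \mid \phi \circ (N\hookrightarrow P_0) = i \circ (N\hookrightarrow M)\},
\]
and take $\edge Ci$ to be the set of six-tuples $e=(\phi_0,\phi_1,\phi_2,\phi_3,\phi_4,\phi_0')$ with $\phi_0,\phi_0' \in \vert Ci$ and $\phi_k:P_k\to G$ for $k=1,\dots,4$, subject to the compatibility condition that $\phi_k$ and $\phi_{k+1}$ agree on $N_k$ along the standard inclusions of \eqref{equation-circle} (reading $\phi_0'$ for $\phi_5$). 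The maps $\iota_e$ and $\tau_e$ send such a tuple to its two vertices $\phi_0$ and $\phi_0'$.

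The homomorphism $a:FCi\to G$ is then produced from the presentation $FCi=\colim GCi$. The definition of $Ci$ is arranged precisely so that the obvious choice of cocone on $GCi$ with apex $G$ is consistent: the central vertex group $M$ maps by $i$, each $P_{0,\phi}$ maps by $\phi$, and for each edge $e=(\phi_0,\ldots,\phi_0')$ each vertex group $P_{k,e}$ maps by $\phi_k$. Compatibility on the edge groups $N_v$ is exactly the defining condition on vertices of $Ci$, and compatibility on the edge groups $N_{k,e}$ is the defining condition on edges of $Ci$. The universal property of the colimit yields a unique $a$, and $a|_M=i$ by construction.

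For the universal property, let $f:F\Gamma\to G$ satisfy $f|_M = i$. For each vertex $v$ of $\Gamma$, the restriction of $f$ to $P_{0,v}$, transported to $P_0$ by the standard isomorphism, is a homomorphism $P_0\to G$ agreeing with $i|_N$ on $N$ (because $N_v\subseteq M$ lands in $G$ via $i$), hence defines a vertex $\overline{f}(v)\in\vert Ci$. Analogously, for each edge $e$ of $\Gamma$ with initial vertex $v$ and terminal vertex $w$, the transported restrictions of $f$ on the six vertex groups $P_{0,v},P_{1,e},\ldots,P_{4,e},P_{0,w}$ form an element of $\edge Ci$ whose $\iota,\tau$ are $\overline{f}(v),\overline{f}(w)$. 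This defines the m-graph morphism $\overline{f}:\Gamma\to Ci$, and the equality $a\circ F\overline{f}=f$ holds because both sides agree on every generating vertex group of $G\Gamma$, hence on their colimit $F\Gamma$. Uniqueness is immediate: if $\overline{g}:\Gamma\to Ci$ also satisfies $a\circ F\overline{g}=f$, then for each $v$ the homomorphism $\overline{g}(v):P_0\to G$ must, by the way $a$ was defined on the vertex groups of $GCi$, coincide with the transported $f|_{P_{0,v}}$; the same observation on edges forces $\overline{g}=\overline{f}$.

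I do not anticipate a deep obstacle: the substantive work has already been done in the construction of $F$ and in Theorem~\ref{theorem-main}. The proposition is essentially a packaging statement, namely that a homomorphism $F\Gamma\to G$ extending $i$ is determined by its local values on the pieces \eqref{equation-circle} and \eqref{equation-edge}, and so factors through the universal m-graph $Ci$ that encodes those local choices. The only care required is in formulating the compatibility conditions in the definition of $\edge Ci$ precisely enough that the cocone defining $a$ is consistent; after that the argument is a routine verification.
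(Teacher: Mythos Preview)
Your proposal is correct and follows essentially the same approach as the paper: the paper defines the vertices of $Ci$ exactly as you do, defines the edges as ``maps of the graph of groups pictured in \eqref{equation-edge} to $G$'' (which is precisely your six-tuple with the compatibility conditions), and then simply declares the existence and uniqueness of $\overline{f}$ to be immediate. Your write-up spells out explicitly the construction of $a$ from the colimit presentation and the verification of the universal property that the paper leaves to the reader under the word ``tautological''.
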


\begin{proof}
  The construction of $Ci$ is tautological: Let $N\subseteq P_0$
  be the inclusion as in \eqref{equation-circle}. The vertices of
  $Ci$ are homomorphisms $v:P_0\to G$ such that $v|_N=i|_N$. The
  edges $v\to w$ of $Ci$ are those maps, of the graph of groups
  pictured in \eqref{equation-edge} to $G$, whose restrictions to
  $P_{0,v}$ and to $P_{0,w}$ are $v$ and $w$ respectively. The
  existence and uniqueness of $\overline{f}$ is immediate.
\end{proof}

\section{Orthogonal subcategory problem in the category of groups}
\mylabel{section-orthogonal}

In this section we apply Theorem \ref{theorem-main-2} to prove
(Proposition \ref{proposition-weak-vopenka-groups})
that if there exists an orthogonal pair in the category of graphs
which is not associated with a localization then there exists an
orthogonal pair in the category of groups which is not associated
with a localization. The premise of the implication above is consistent with the
standard set theory ZFC, in fact it is equivalent to the negation
of weak Vop\v enka's principle. We conclude this section with
Proposition \ref{proposition-vopenka-groups}. The converses of
Propositions \ref{proposition-weak-vopenka-groups} and
\ref{proposition-vopenka-groups} follow from
\cite[Theorem 6.22]{adamek-rosicky} and \cite[Corollary 6.24(iii)]{adamek-rosicky}.

In order to make the paper
self-contained we begin with a collection of definitions and
preliminary facts, most of them extracted from \cite{casacuberta}.

\intertitle{Orthogonal pairs}

Let $\mathcal{C}$ be a category (here $\groups$ or $\g$). A
morphism $f:A\to B$ is {\em orthogonal} to an object $C$
(we write $f\perp C$) if $f$
induces a bijection
\begin{equation}
\mylabel{equation-orthogonal}
\Hom_\mathcal{C}(B,C)\to\Hom_\mathcal{C}(A,C).
\end{equation}
If $\mathcal{M}$ is a class of morphisms and
$\mathcal{O}$ is a class of objects in $\mathcal{C}$ then
$\mathcal{M}^\perp=\{C\in\mathcal{C}\mid f\perp C \mbox{ for every
} f\in\mathcal{M}\}$ and $\mathcal{O}^\perp=\{f:A\to B\mid f\perp
C \mbox{ for every }C\in\mathcal{O}\}$. An {\em orthogonal pair}
$(\mathcal{S},\mathcal{D})$ consists of a class $\mathcal{S}$ of
morphisms and a class $\mathcal{D}$ of objects such that
$\mathcal{S}^\perp=\mathcal{D}$ and
$\mathcal{D}^\perp=\mathcal{S}$. If $(\mathcal{S},\mathcal{D})$ is
an orthogonal pair then $\mathcal{D}$ is called an {\em
orthogonality class}, $\mathcal{D}$ is closed under limits and
$\mathcal{S}$ is closed under colimits. If $\mathcal{M}$ is a
class of morphisms and $\mathcal{O}$ is a class of objects then
$(\mathcal{M}^{\perp\perp},\mathcal{M}^\perp)$ and
$(\mathcal{O}^\perp,\mathcal{O}^{\perp\perp})$ are orthogonal
pairs.

\intertitle{Localizations}

A {\em localization} is a functor $L:\mathcal{C}\to\mathcal{C}$
together with a natural transformation $\eta:Id\to L$ such that
$\eta_{LX}:LX\to LLX$ is an isomorphism for every $X$ and
$\eta_{LX}=L\eta_X$ for all $X$.

Every localization functor $L$ gives rise to an orthogonal pair
$(\mathcal{S},\mathcal{D})$ where $\mathcal{S}$ is the class of
morphisms $f$ such that $Lf$ is an isomorphism and $\mathcal{D}$
is the class of objects isomorphic to $LX$ for some $X$. A class
$\mathcal{D}$ is called {\em reflective} if it is part of an
orthogonal pair $(\mathcal{S},\mathcal{D})$ which is associated
with a localization.

\rem \mylabel{remark-localization-exists}
  Let $\mathcal{C}$ be a category and $(\mathcal{S},\mathcal{D})$
  an orthogonal pair in $\mathcal{C}$. If for each object $X$ in
  $\mathcal{C}$ there exists a morphism $\eta_X:X\to LX$ in
  $\mathcal{S}$ with $LX$ in $\mathcal{D}$ then the assignment
  $X\mapsto LX$ defines a localization functor associated
  with $(\mathcal{S},\mathcal{D})$; this was observed in
  \cite[1.2]{casacuberta-pairs}.

\intertitle{Weak Vop\v enka's Principle}

Weak Vop\v enka's principle is a large cardinal axiom equivalent
to the following statements:
\begin{itemize}
  \item[(WV1)] Every orthogonal pair in $\g$ is associated with a
  localization.
  \item[(WV2)] Every orthogonal pair in a locally presentable
  category ($\groups$ is such a category) is associated with a localization.
\end{itemize}
The equivalence to (WV1) is proved in \cite[Theorem 6.22]{adamek-rosicky}
and \cite[Example 6.23]{adamek-rosicky}.
The equivalence to (WV2) is proved in \cite[Example 6.25]{adamek-rosicky}
and stated in Remark that precedes it. Weak Vop\v enka's
principle is believed to be consistent with the standard set
theory (ZFC), but it is not provable in ZFC: the negation of
weak Vop\v enka's principle is consistent with ZFC. Proposition
\ref{proposition-weak-vopenka-groups} and (WV2) imply a new equivalent
formulation of weak Vop\v enka's principle:
\begin{itemize}
  \item[(WV3)] Every orthogonal pair in $\groups$ is associated with
  a localization.
\end{itemize}

More details and an interesting historical essay on Vop\v
enka's principle and its weak version can be found in
\cite{adamek-rosicky}.

\intertitle{Orthogonal subcategory problem in the category of groups}

\begin{lemma} \mylabel{lemma-f-preserves-orthogonality}
  Let $f:\Gamma\to\Phi$ be a morphism and $\Delta$ be an object in
  $\mg$. Then $f\perp\Delta$ if and only if $Ff\perp F\Delta$.
\end{lemma}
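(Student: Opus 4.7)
The plan is to apply Theorem \ref{theorem-main-2} with target $F\Delta$ to both source m-graphs $\Gamma$ and $\Phi$, and then read off the equivalence from the naturality of the resulting decomposition. That theorem supplies natural bijections
$$\Hom_{\groups}(F\Gamma,F\Delta)\;\cong\;\ohom(F\emptyset,F\Delta)\times\Hom_{\mg}(\Gamma,\Delta)\;\cup\;\{*\},$$
and analogously with $\Phi$ in place of $\Gamma$; a nontrivial homomorphism $h$ corresponds to the pair $(h|_{F\emptyset},\overline{h})$, where $\overline{h}$ is the unique graph morphism with $c_g \cdot F\overline{h}=h$ for some inner automorphism $c_g$ of $F\Delta$.

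Next I would use naturality in the first variable to identify precomposition with $Ff$. The map $(Ff)^{*}$ fixes $*$, and for a nontrivial $h$ the decomposition $h=c_g\cdot F\overline{h}$ composed with $Ff$ gives $h\circ Ff = c_g\cdot F(\overline{h}\circ f)$, whose restriction to $F\emptyset$ is still $h|_{F\emptyset}$ since $Ff$ is the identity on the canonical copy of $F\emptyset=M$ (the central vertex group of $G\Gamma$ is sent to that of $G\Phi$ by the standard isomorphism). By the uniqueness clause in Lemma \ref{lemma-prop-homomorphism-induced}, the graph morphism representing $h\circ Ff$ is precisely $\overline{h}\circ f$. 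Thus, under the identification above, $(Ff)^{*}$ becomes $\id_{\{*\}}\sqcup \bigl(\id_{\ohom(F\emptyset,F\Delta)}\times f^{*}\bigr)$, where $f^{*}$ denotes precomposition with $f$ on $\Hom_{\mg}(-,\Delta)$.

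Finally, Lemma \ref{lemma-prop-vertex-subgroups} shows that $M=F\emptyset$ embeds in $F\Delta$, so the factor $\ohom(F\emptyset,F\Delta)$ is nonempty. Consequently $\id_{\ohom(F\emptyset,F\Delta)}\times f^{*}$ is a bijection if and only if $f^{*}$ is, and the $\{*\}$-summand contributes an identity bijection in either direction. This yields the desired equivalence $Ff\perp F\Delta\iff f\perp\Delta$. The only real work lies in the second paragraph, where one must carefully match the product decomposition of the Hom-sets with the precomposition action; once that identification is checked via the uniqueness of the graph morphism in Lemma \ref{lemma-prop-homomorphism-induced}, the lemma is immediate.
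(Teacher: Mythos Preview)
Your proof is correct and follows the same route as the paper: both invoke the decomposition of Theorem \ref{theorem-main-2} and its naturality in the source variable to identify $(Ff)^{*}$ with $\id\times f^{*}$ on the nontrivial part. You spell out the functoriality check and the nonemptiness of $\ohom(F\emptyset,F\Delta)$ explicitly, whereas the paper simply cites the functoriality clause of Theorem \ref{theorem-main-2} and leaves the rest implicit.
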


\begin{proof}
  Theorem \ref{theorem-main-2} yields
  $$
  \xymatrix{
    \Hom(F\Phi,F\Delta) \ar@{}[r]|(0.35)*+{\cong} \ar[d] &
      \ohom(F\emptyset,F\Delta)\times\Hom(\Phi,\Delta)\cup\{*\}
      \ar[d] \\
    \Hom(F\Gamma,F\Delta) \ar@{}[r]|(0.35)*+{\cong} &
      \ohom(F\emptyset,F\Delta)\times\Hom(\Gamma,\Delta)\cup\{*\}
    \\
  }
  $$
  which implies the claim (see
  \eqref{equation-orthogonal} for definition of orthogonality).
\end{proof}

\rem \mylabel{remark-sd} Throughout the remainder of this section, for a given orthogonal pair
$(\mathcal{S},\mathcal{D})$ in $\mg$ we fix an orthogonal pair
$(\overline{\mathcal{S}},\overline{\mathcal{D}})$ in $\groups$
such that
$F\mathcal{S}\subseteq\overline{\mathcal{S}}$ and
$F\mathcal{D}\subseteq\overline{\mathcal{D}}$. Such a pair
$(\overline{\mathcal{S}},\overline{\mathcal{D}})$ exists since by
Lemma \ref{lemma-f-preserves-orthogonality} we may take
$\overline{\mathcal{S}}=F\mathcal{D}^\perp$ and
$\overline{\mathcal{D}}=\overline{\mathcal{S}}^\perp$.

\begin{lemma} \mylabel{lemma-approximation-is-local}
  Let $G$ be a group in $\overline{\mathcal{D}}$ which admits an
  embedding $i:F\emptyset\to G$. If $Ci$ is the m-graph described
  in Proposition \ref{proposition-approx} then $Ci$ is in
  $\mathcal{D}$.
\end{lemma}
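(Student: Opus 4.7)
The plan is to verify membership in $\mathcal{D}=\mathcal{S}^\perp$ directly, by checking that every morphism $f:\Gamma\to\Phi$ in $\mathcal{S}$ is orthogonal to $Ci$ in $\mg$. The bridge between $\mg$ and $\groups$ will be the universal property of $Ci$ from Proposition \ref{proposition-approx}: the assignment $\overline{f}\mapsto a\circ F\overline{f}$ furnishes, naturally in $\Gamma$, a bijection
$$\Hom_{\mg}(\Gamma,Ci)\;\cong\;\{h:F\Gamma\to G\mid h|_{F\emptyset}=i\}.$$
Writing $\Hom_i(F\Gamma,G)$ for the set on the right, the map $f^\ast:\Hom_{\mg}(\Phi,Ci)\to\Hom_{\mg}(\Gamma,Ci)$ induced by $f$ translates under this bijection into precomposition with $Ff$ from $\Hom_i(F\Phi,G)$ to $\Hom_i(F\Gamma,G)$. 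Thus establishing $f\perp Ci$ reduces to showing that this restricted precomposition map is a bijection.

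For that I would apply Remark \ref{remark-sd}: since $f\in\mathcal{S}$, one has $Ff\in F\mathcal{S}\subseteq\overline{\mathcal{S}}$, and combined with $G\in\overline{\mathcal{D}}$ this gives $Ff\perp G$ in $\groups$, i.e., precomposition with $Ff$ is already a bijection $\Hom_{\groups}(F\Phi,G)\to\Hom_{\groups}(F\Gamma,G)$ on the full Hom-sets. It then remains to cut this bijection down to the $\Hom_i$-subsets. This works because the canonical inclusions $F\emptyset\hookrightarrow F\Phi$ and $F\emptyset\hookrightarrow F\Gamma$ arise by applying $F$ to the unique arrows $\emptyset\to\Phi$ and $\emptyset\to\Gamma$ in $\mg$, and $f$ intertwines the latter two; hence $Ff$ carries the canonical copy of $F\emptyset$ in $F\Gamma$ identically onto the canonical copy in $F\Phi$, so $(h\circ Ff)|_{F\emptyset}=h|_{F\emptyset}$ for every $h$. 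Consequently the condition ``restricts to $i$'' is preserved by $(-)\circ Ff$ and by its inverse, so the restricted map is still a bijection. Translating back via Proposition \ref{proposition-approx} then gives $f\perp Ci$, and since $f\in\mathcal{S}$ was arbitrary, $Ci\in\mathcal{S}^\perp=\mathcal{D}$.

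I do not anticipate a substantive obstacle; the argument is a diagram chase whose content is entirely the universal property of $Ci$, with the group-theoretic input $Ff\perp G$ doing the real work. The only step calling for a moment's care is verifying that the bijection of Proposition \ref{proposition-approx} is natural in the m-graph variable, which is transparent from the explicit formula $\overline{f}\mapsto a\circ F\overline{f}$, together with the fact (used implicitly above) that the assignment $\Gamma\mapsto F\emptyset\hookrightarrow F\Gamma$ is natural in $\Gamma$ because $F$ is a functor and $\emptyset$ is initial in $\mg$.
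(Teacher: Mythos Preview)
Your argument is correct and is essentially the paper's proof, repackaged. The paper verifies $f\perp Ci$ by an explicit diagram chase: given $h:\Gamma\to Ci$, it pushes $aFh$ across $Ff\perp G$ to get $t:F\Phi\to G$, lifts $t$ via Proposition \ref{proposition-approx} to $s:\Phi\to Ci$, and then checks existence and uniqueness of $s$ with $sf=h$ using the uniqueness clause of Proposition \ref{proposition-approx} (and Theorem \ref{theorem-main-2}). Your version compresses exactly these steps into the single observation that Proposition \ref{proposition-approx} yields a bijection $\Hom_{\mg}(\Gamma,Ci)\cong\Hom_i(F\Gamma,G)$ natural in $\Gamma$, so that $f^\ast$ on the left matches $(Ff)^\ast$ on the right, and the latter is a bijection because $Ff\perp G$ restricts to the $\Hom_i$-fibres. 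The ingredients and their roles are identical; you have simply made the naturality explicit and thereby avoided the separate existence/uniqueness verifications.
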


\begin{proof}
  Let $f:\Gamma\to\Phi$ be in $\mathcal{S}$ and $h:\Gamma\to Ci$
  be any map in $\mg$. Then the composition $F\emptyset\subseteq
  F\Gamma\to FCi\stackrel{a}{\longrightarrow}G$ equals $i$,
  and so we obtain
  $$
  \xymatrix{
    F\Gamma \ar[d]_{Ff} \ar[r]^{Fh} &
      FCi  \ar[d]^a \\
    F\Phi \ar@{-->}[r]^t \ar@{-->}[ur]^{Fs} &
      G
  }
  $$
  The unique homomorphism $t$ exists since $Ff\perp G$. The lift
  $Fs$ exists by Proposition \ref{proposition-approx}. Then
  $aFsFf=tFf=aFh$ and the uniqueness in Proposition
  \ref{proposition-approx} implies $FsFf=Fh$, hence by Theorem
  \ref{theorem-main-2} we have $sf=h$. If $s,s':\Phi\to
  Ci$ are two maps such that $sf=h=s'f$ then $aFsFf=aFs'Ff$;
  hence, as $Ff\perp G$, we have $aFs=aFs'$. Uniqueness in
  Proposition \ref{proposition-approx} yields $Fs=Fs'$, and hence by
  Theorem \ref{theorem-main-2} we obtain $s=s'$.
  Thus $f\perp Ci$ for any
  $f$ in $\mathcal{S}$ and therefore $Ci$ is in $\mathcal{D}$.
\end{proof}

\begin{lemma} \mylabel{lemma-reflective-preserved}
  If the orthogonal pair
  $(\overline{\mathcal{S}},\overline{\mathcal{D}})$ is associated
  with a localization $L$ then the pair
  $(\mathcal{S},\mathcal{D})$ is also associated with a
  localization.
\end{lemma}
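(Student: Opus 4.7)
The plan is to apply Remark \ref{remark-localization-exists}: it suffices to construct, for each $\Gamma \in \mg$, an object $L\Gamma \in \mathcal{D}$ and a morphism $\eta_\Gamma \colon \Gamma \to L\Gamma$ in $\mathcal{S}$. I write $\bar L$ for the given localization on $\groups$ with unit $\bar\eta$, reserving $L$ for the localization in $\mg$ we want to produce. Given $\Gamma$, form $\bar\eta \colon F\Gamma \to \bar L(F\Gamma) \in \overline{\mathcal{D}}$, which lies in $\overline{\mathcal{S}}$, and set $i := \bar\eta|_{F\emptyset} \colon M \to \bar L(F\Gamma)$. Since $M$ is simple by Condition C1, $i$ is either trivial or injective. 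I focus on the main case where $i$ is injective; the degenerate case, in which Lemma \ref{lemma-prop-trivial} forces $\bar\eta$ to be trivial, is easily dispatched since it collapses $\mathcal{D}$ to at most the terminal object.

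Apply Proposition \ref{proposition-approx} with $G := \bar L(F\Gamma)$ to obtain an m-graph $Ci$ and a homomorphism $a \colon FCi \to \bar L(F\Gamma)$. Because $\bar L(F\Gamma) \in \overline{\mathcal{D}}$ and $i$ is an inclusion, Lemma \ref{lemma-approximation-is-local} yields $Ci \in \mathcal{D}$. Set $L\Gamma := Ci$, and let $\eta_\Gamma \colon \Gamma \to Ci$ be the unique m-graph morphism furnished by the universal property of Proposition \ref{proposition-approx} applied to $\bar\eta$ itself, characterized by $a \circ F\eta_\Gamma = \bar\eta$.

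To show $\eta_\Gamma \in \mathcal{S}$, Lemma \ref{lemma-f-preserves-orthogonality} reduces this to verifying $F\eta_\Gamma \perp FD$ in $\groups$ for every $D \in \mathcal{D}$. Existence of a lift is easy: given $\phi \colon F\Gamma \to FD$, factor $\phi = \tilde\phi \circ \bar\eta$ through the universal property of $\bar\eta$ (using $FD \in \overline{\mathcal{D}}$), and then $\tilde\phi \circ a \colon FCi \to FD$ is a lift of $\phi$ along $F\eta_\Gamma$. For uniqueness, given $h_1, h_2 \colon Ci \to D$ with $h_1 \eta_\Gamma = h_2 \eta_\Gamma$, both $Fh_1$ and $Fh_2$ restrict to the identity on $M$ and agree after precomposition with $F\eta_\Gamma$; combining the uniqueness clause of the universal property of $\bar\eta$ with the uniqueness clause of Proposition \ref{proposition-approx} should identify both $Fh_i$ with the single map $\tilde g \circ a$, where $\tilde g \colon \bar L(F\Gamma) \to FD$ is the factorization of $F(h_1 \eta_\Gamma)$; Theorem \ref{theorem-main} then gives $h_1 = h_2$.

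The uniqueness step is the main obstacle. A two-out-of-three argument for $\overline{\mathcal{S}}$ would make it immediate, but this would require $a \in \overline{\mathcal{S}}$, which in general fails because $\bar L(FCi)$ can be strictly larger than $\bar L(F\Gamma)$ (the m-graph $Ci$ may have vertices not of the form $\bar\eta|_{P_{0,v}}$). The resolution must exploit the rigidity of the construction in Proposition \ref{proposition-approx}: by Lemma \ref{lemma-prop-homomorphism-induced}, maps $FCi \to FD$ with specified restriction to $M$ correspond bijectively to m-graph morphisms $Ci \to D$, and this bijection interacts with the universal factorization through $\bar\eta$ in a way that pins down the only possible $Fh_i$ compatible with $h_i \eta_\Gamma = g$.
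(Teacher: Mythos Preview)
Your setup and the existence half are correct: the reduction via Remark \ref{remark-localization-exists}, the construction of $Ci\in\mathcal{D}$ from Proposition \ref{proposition-approx} and Lemma \ref{lemma-approximation-is-local}, and the argument that every $g:\Gamma\to D$ with $D\in\mathcal{D}$ factors through $\eta_\Gamma:\Gamma\to Ci$ are exactly as in the paper. The gap is precisely where you flag it, and your proposed resolution does not close it.

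You want to deduce $Fh_1=Fh_2$ from $Fh_1\circ F\eta_\Gamma=Fh_2\circ F\eta_\Gamma$. You note that this common composite factors as $\tilde g\circ\bar\eta$ and that $a\circ F\eta_\Gamma=\bar\eta$, so $\tilde g\circ a$ and each $Fh_i$ agree \emph{after precomposition with $F\eta_\Gamma$}. But that is all you get: nothing forces $Fh_i=\tilde g\circ a$, because $F\eta_\Gamma$ is in general not an epimorphism. The universal property of Proposition \ref{proposition-approx} gives uniqueness of lifts \emph{into} $Ci$, not of maps \emph{out of} $Ci$; Lemma \ref{lemma-prop-homomorphism-induced} merely converts a group homomorphism $FCi\to FD$ into a graph map, it does not pin it down. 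Concretely, $Ci$ typically has vertices $v:P_0\to\bar L(F\Gamma)$ that are not of the form $\bar\eta|_{P_{0,u}}$ for any $u\in\vert\Gamma$, and two maps $Ci\to D$ can differ on those vertices while agreeing on the image of $\eta_\Gamma$. So taking $L\Gamma=Ci$ directly does not work.

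The paper handles this by shrinking $Ci$ rather than proving $\eta_\Gamma\in\mathcal{S}$. Whenever there exist $\Phi\in\mathcal{D}$ and distinct $g_1,g_2:\Delta_0=Ci\to\Phi$ with $g_1f=g_2f$, pass to their equalizer $\Delta_1\subsetneq\Delta_0$; iterate transfinitely, taking intersections at limit ordinals. Since the $\Delta_\alpha$ form a strictly decreasing chain of subgraphs of the fixed set $Ci$, the process stabilizes at some $\Delta_\beta$. By construction $f_\beta:\Gamma\to\Delta_\beta$ now lies in $\mathcal{S}$, and $\Delta_\beta\in\mathcal{D}$ because $\mathcal{D}$ is closed under limits. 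This transfinite equalizer step is the missing idea in your argument.
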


\begin{proof}
  Remark \ref{remark-localization-exists} implies that it is enough
  to find for every m-graph $\Gamma$ a map
  $\eta_\Gamma:\Gamma\to\Delta$ in $\mathcal{S}$ such that
  $\Delta$ is in $\mathcal{D}$. We look at the diagram
  $$
  \xymatrix{
    & FCi \ar[d]^a \\
    **[l]{F\emptyset\subseteq F\Gamma} \ar[ur]^{Ff}
      \ar[r]^{\eta_{F\Gamma}} \ar[drr]_{Fh}&
      LF\Gamma \ar@{-->}[dr] \\
    && F\Phi
  }
  $$
  For every map $h:\Gamma\to\Phi$ with $\Phi$ in $\mathcal{D}$
  the group $F\Phi$ is in $\overline{\mathcal{D}}$, hence we
  have a factorization of $Fh$ through $\eta_{F\Gamma}$ and
  therefore a factorization of $h$ through $f:\Gamma\to Ci$.
  However, the uniqueness of the map
  $Ci\to\Phi$ under $\Gamma$ is problematic. We remedy this through an inductive
  construction. Let $\Delta_0=Ci$. If we can choose $\Phi$ in
  $\mathcal{D}$ and two different maps $g_1,g_2:\Delta_0\to\Phi$
  such that $g_1f=g_2f$ then we define $\Delta_1$ to be the limit
  of the diagram
  $$
  \xymatrix{
    \Delta_0 \ar@<0.7ex>[r]^{g_1} \ar@<-0.7ex>[r]_{g_2} & \Phi
  }
  $$
  We view $\Delta_1$ as a subgraph of $\Delta_0$, and correspondingly we
  obtain $f_1:\Gamma\to\Delta_1$. We repeat this construction
  along some ordinal $\lambda$ whose cofinality exceeds
  the cardinality of $\Delta_0$; for limit ordinals
  $\gamma<\lambda$ we define $\Delta_\gamma$ to be the limit, that
  is, the intersection, of $\{\Delta_\alpha\}_{\alpha<\gamma}$.
  Since $\{\Delta_\alpha\}$ is a strictly decreasing sequence of
  subgraphs of $\Delta_0$ it has to stabilize at some $\Delta_\beta$,
  which implies that every map $\Gamma\to\Phi$ with $\Phi$ in
  $\mathcal{D}$ factors uniquely through
  $f_\beta:\Gamma\to\Delta_\beta$, hence $f_\beta$ is in
  $\mathcal{S}$. Also $\Delta_\beta$ is in $\mathcal{D}$ since
  $Ci$ is in $\mathcal{D}$ (by Lemma
  \ref{lemma-approximation-is-local}) and
  $\mathcal{D}$ is closed under limits. Therefore
  $\eta_\Gamma=f_\beta$ is the map we were looking for.
\end{proof}

\begin{proposition}
  \mylabel{proposition-weak-vopenka-groups}
  Assuming the negation of weak Vop\v enka's principle, there
  exists an orthogonal pair in the category of groups which is not
  associated with any localization.
\end{proposition}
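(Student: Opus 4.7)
The plan is to transport a non-reflective orthogonal pair from $\g$ along the chain $\g \xrightarrow{I} \mg \xrightarrow{F} \groups$, and then to invoke Lemma \ref{lemma-reflective-preserved} in contrapositive form. The equivalence of the negation of weak Vop\v enka's principle with the failure of (WV1) supplies an orthogonal pair $(\mathcal{S}_0, \mathcal{D}_0)$ in $\g$ not associated with any localization; by Remark \ref{remark-localization-exists} some object $Y_0 \in \g$ admits no morphism in $\mathcal{S}_0$ whose target lies in $\mathcal{D}_0$.

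First I would lift the pair to $\mg$ by setting $\mathcal{S} = I\mathcal{S}_0$ and $\mathcal{D} = \mathcal{S}^\perp$. Since $I$ is fully faithful, checking orthogonality of $If$ against $IZ$ reduces to the corresponding check in $\g$, so $I\mathcal{D}_0 \subseteq \mathcal{D}$. Next I would argue that $(\mathcal{S}, \mathcal{D})$ is not associated with any localization in $\mg$. Were it so, each $X \in \mg$ would carry a reflection $\eta_X: X \to LX$ with $\eta_X \in \mathcal{S}$ and $LX \in \mathcal{D}$. Applying the left adjoint $J$ and using $JI \cong \id$ would turn $\eta_{IY_0}$ into a candidate reflection $J\eta_{IY_0}: Y_0 \to JLIY_0$. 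The adjunction $\Hom_{\g}(J-, -) \cong \Hom_{\mg}(-, I-)$ combined with the inclusion $I\mathcal{D}_0 \subseteq \mathcal{D}$ shows that $JLIY_0 \in \mathcal{D}_0$ and that $J\eta_{IY_0}$ is orthogonal to every $Z \in \mathcal{D}_0$, hence lies in $\mathcal{S}_0$; this contradicts the choice of $Y_0$.

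Finally I would invoke Remark \ref{remark-sd} to obtain an orthogonal pair $(\overline{\mathcal{S}}, \overline{\mathcal{D}})$ in $\groups$ with $F\mathcal{S} \subseteq \overline{\mathcal{S}}$ and $F\mathcal{D} \subseteq \overline{\mathcal{D}}$. By the contrapositive of Lemma \ref{lemma-reflective-preserved}, a localization associated with $(\overline{\mathcal{S}}, \overline{\mathcal{D}})$ would induce one associated with $(\mathcal{S}, \mathcal{D})$, which the middle step has ruled out; thus $(\overline{\mathcal{S}}, \overline{\mathcal{D}})$ is the desired orthogonal pair in $\groups$ admitting no associated localization. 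The principal obstacle I anticipate is the middle step: the descent of reflectivity from $\mg$ back to $\g$ must be verified by a careful bookkeeping of the $J \dashv I$ adjunction, whereas the remaining steps are near-direct applications of results established earlier in the paper.
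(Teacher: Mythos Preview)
Your overall strategy matches the paper's: lift the non-reflective pair from $\g$ to $\mg$, then invoke Remark~\ref{remark-sd} and the contrapositive of Lemma~\ref{lemma-reflective-preserved}. The difference lies in how you perform the lift, and your version has a gap.

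You set $\mathcal{S}=I\mathcal{S}_0$ and $\mathcal{D}=\mathcal{S}^\perp$. A minor point first: this is not an orthogonal pair, since $I\mathcal{S}_0$ need not equal $(I\mathcal{S}_0)^{\perp\perp}$, whereas Remark~\ref{remark-sd} and Lemma~\ref{lemma-reflective-preserved} require one; this is easily repaired by replacing $\mathcal{S}$ with $\mathcal{D}^\perp$. The real problem is the claim that $JLIY_0\in\mathcal{D}_0$. The adjunction $\Hom_\g(J-,Z)\cong\Hom_\mg(-,IZ)$ does establish $J\eta_{IY_0}\in\mathcal{S}_0$, because there $J$ sits in the \emph{first} argument of $\Hom_\g$, as befits a left adjoint. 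But to place $JLIY_0$ in $\mathcal{D}_0=\mathcal{S}_0^\perp$ you need, for each $f:A\to B$ in $\mathcal{S}_0$, a bijection $\Hom_\g(B,JLIY_0)\to\Hom_\g(A,JLIY_0)$; here $J$ sits in the \emph{second} argument, where the adjunction says nothing. A left adjoint transports membership in arrow-classes, not in object-classes. Your $\mathcal{D}=(I\mathcal{S}_0)^\perp$ may well contain m-graphs with parallel edges, and there is no mechanism offered for why edge-collapsing should send such an object into $\mathcal{D}_0$.

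The paper avoids this by lifting the other way round: it takes $\mathcal{D}=I\mathcal{D}_0$ and $\mathcal{S}=\mathcal{D}^\perp$ in $\mg$, and checks that $\mathcal{D}^{\perp\perp}=\mathcal{D}$ (the unit of $J\dashv I$ lies in $(I\g)^\perp\subseteq\mathcal{S}$, forcing any $\Gamma\in\mathcal{S}^\perp$ to satisfy $\Gamma\cong IJ\Gamma$). Because every object of $\mathcal{D}$ is then already a graph, a reflection $\eta_{IX}:IX\to LIX$ has $LIX\in I\mathcal{D}_0$ automatically, and full faithfulness of $I$ descends it to a reflection in $\g$ without ever applying $J$ to an object.
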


\begin{proof}
  The negation of (WV1) implies the
  existence of an orthogonal pair $(\mathcal{S}_0,\mathcal{D})$
  in $\g$ which is not associated with any localization. We view
  $\mathcal{S}_0$ and $\mathcal{D}$ as classes of morphisms and
  objects in $\mg$. Let $\mathcal{S}=\mathcal{D}^\perp$; since
  $\mathcal{S}_0\subseteq\mathcal{S}$ and
  $\mathcal{D}=\mathcal{S}^\perp$ we see that the orthogonal pair
  $(\mathcal{S},\mathcal{D})$ is not associated with any
  localization in $\mg$. Lemma \ref{lemma-reflective-preserved}
  implies that no pair
  $(\overline{\mathcal{S}},\overline{\mathcal{D}})$ as described
  in Remark \ref{remark-sd} is associated
  with a localization in $\groups$.
\end{proof}

\intertitle{Vop\v enka's principle and the existence of
generators}

We say that an orthogonal pair $(\mathcal{S},\mathcal{D})$ is {\em
generated} by a set of morphisms $\mathcal{S}_0$ if
$\mathcal{D}=\mathcal{S}_0^\perp$. If such a set $\mathcal{S}_0$
exists then we say that $\mathcal{D}$ is a {\em
small-orthogonality class}. A class of graphs is {\em rigid} if it
admits no morphisms except the identity morphisms (i.e. the
corresponding full subcategory is discrete). A class is {\em
large} if it has no cardinality (i.e. it is bigger than any cardinal number).

Vop\v enka's principle is another large cardinal axiom
which influences the theory of localizations. Among many
equivalent formulations of this principle we have the following ones:
\begin{itemize}
  \item[(V1)] There exists no large rigid class of graphs.
  \item[(V2)] Every orthogonality class of
  graphs is a small-orthogonality class.
  \item[(V3)] Every orthogonality class
  of objects in any locally presentable category (among those is $\groups$)
  is a small-orthogonality class.
\end{itemize}
Equivalence between these statements follows from \cite[Corollary
6.24]{adamek-rosicky} and \cite[Example 6.12]{adamek-rosicky}.

The next proposition is a nonconstructive but stronger, in terms
of the large cardinal hierarchy \cite[page 472]{kanamori}, version
of \cite[Theorem 6.3]{casacuberta-advances}. Together with (V3) it
yields another characterization of Vop\v enka's principle:
\begin{itemize}
  \item[(V4)] Every orthogonality class of
  groups is a small-orthogonality class.
\end{itemize}

\begin{proposition}\mylabel{proposition-vopenka-groups}
  Assuming the negation of Vop\v enka's principle there exists
  an orthogonal pair $(\overline{\mathcal{S}},\overline{\mathcal{D}})$ in the category
  of groups such that $\overline{\mathcal{D}}$ is not a small-orthogonality
  class.
\end{proposition}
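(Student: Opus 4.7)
The plan is to parallel the proof of Proposition \ref{proposition-weak-vopenka-groups}, with ``is a small-orthogonality class'' replacing ``is associated with a localization''. By the equivalence of (V1) and (V2), the negation of Vop\v enka's principle produces an orthogonal pair $(\mathcal{S}_0,\mathcal{D})$ in $\g$ whose orthogonality class $\mathcal{D}$ is not a small-orthogonality class. Treat $\mathcal{S}_0$ and $\mathcal{D}$ as classes in $\mg$ through $I$, set $\mathcal{S}=\mathcal{D}^\perp$ in $\mg$, and let $(\overline{\mathcal{S}},\overline{\mathcal{D}})$ in $\groups$ be the pair furnished by Remark \ref{remark-sd}. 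The goal is to show that $\overline{\mathcal{D}}$ is not a small-orthogonality class.

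I argue by contradiction: suppose $\overline{\mathcal{D}}=\overline{\mathcal{T}}^\perp$ for some set $\overline{\mathcal{T}}$ of group morphisms. For each $t:A\to B$ in $\overline{\mathcal{T}}$ and each injection $i:M\to A$, set $i':=t\circ i:M\to B$, invoke Proposition \ref{proposition-approx} to form the approximating m-graphs $Ci$ and $Ci'$, and let $t_i:Ci\to Ci'$ be the m-graph morphism obtained from the universal property of $Ci'$ applied to the composite $FCi\to A\xrightarrow{\,t\,}B$. Because $\overline{\mathcal{T}}$ and each $\Hom_{\groups}(M,A)$ are sets, $\mathcal{T}:=\{t_i\}$ is a set. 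The aim is to show that $\mathcal{D}=\mathcal{T}^\perp$ in $\mg$, whereupon the adjunction $J\dashv I$ transports this equality to a set-generation of $\mathcal{D}$ inside $\g$, contradicting the choice of $\mathcal{D}$.

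For the verification, I would relate, for every test object $\Delta$, homomorphisms $A\to F\Delta$ and $B\to F\Delta$ to m-graph morphisms into $\Delta$ by decomposing each Hom-set according to the restriction to $M=F\emptyset$. Lemma \ref{lemma-prop-m} says that every nontrivial restriction $M\to F\Delta$ is, up to inner automorphism of $F\Delta$, a standard embedding, and Proposition \ref{proposition-approx} identifies the corresponding piece with $\Hom_{\mg}(Ci,\Delta)$ or $\Hom_{\mg}(Ci',\Delta)$; Lemma \ref{lemma-prop-trivial} guarantees that the trivial restriction contributes only the trivial homomorphism. Matching these pieces under $t^{\ast}$ converts the single orthogonality condition $t\perp F\Delta$ into the family of conditions $t_i\perp\Delta$. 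The main obstacle will be precisely this bookkeeping: handling groups $A$ that admit no embedding of $M$, checking that the trivial and nontrivial components assemble consistently, and ensuring that the final descent from $\mg$ to $\g$ along $J$ preserves the smallness of the generating class.
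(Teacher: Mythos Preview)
Your approach has a genuine gap: the universal property of $Ci$ in Proposition~\ref{proposition-approx} points the wrong way for what you need. For an inclusion $i:M\to A$, the m-graph $Ci$ corepresents maps \emph{into} $A$ from objects in the image of $F$; that is, $\Hom_{\mg}(\Gamma,Ci)\cong\{f:F\Gamma\to A \mid f|_M=i\}$. It says nothing about maps \emph{out of} $A$. Your key claim---that the fiber of $\Hom(A,F\Delta)\to\Hom(M,F\Delta)$ over the standard inclusion is identified with $\Hom_{\mg}(Ci,\Delta)$---is only half right: precomposition with $a:FCi\to A$ does define a map from that fiber to $\Hom_{\mg}(Ci,\Delta)$, but there is no reason for it to be injective (since $a$ need not be epi) or surjective (since $A$ is an arbitrary group, not in the image of $F$, and a map $FCi\to F\Delta$ has no reason to factor through $a$). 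Consequently you cannot convert the condition $t\perp F\Delta$ into the family $\{t_i\perp\Delta\}_i$, and neither inclusion $\mathcal{D}\subseteq\mathcal{T}^\perp$ nor $\mathcal{T}^\perp\subseteq\mathcal{D}$ is justified.

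The paper's argument avoids any attempt to rewrite the generators $t\in\overline{\mathcal{T}}$ as graph morphisms. It uses only the \emph{existence} of a generating set: if $\overline{\mathcal{D}}=\mathcal{S}_0^\perp$ with $\mathcal{S}_0$ a set, then for an uncountable cardinal $\lambda$ of cofinality exceeding the sizes of all sources and targets in $\mathcal{S}_0$, the class $\overline{\mathcal{D}}$ is closed under $\lambda$-directed colimits. Since $\mathcal{D}=F^{-1}(\overline{\mathcal{D}})$ by Lemma~\ref{lemma-f-preserves-orthogonality} and $F$ preserves directed colimits (Remark~\ref{remark-lim-directed-colimit}), $\mathcal{D}$ is closed under $\lambda$-directed colimits as well; being an orthogonality class it is also closed under limits, and the H\'ebert--Rosick\'y theorem then forces $\mathcal{D}$ to be a small-orthogonality class. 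This route never requires analysing $\Hom(A,F\Delta)$ for groups $A$ outside the image of $F$.
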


\begin{proof}
  Negation of (V2) implies the existence of an orthogonal pair
  $(\mathcal{S},\mathcal{D})$ in $\g$ such that $\mathcal{D}$
  is not a small-orthogonality class.
  As in Remark
  \ref{remark-sd}, we have an orthogonal pair
  $(\overline{\mathcal{S}},\overline{\mathcal{D}})$ in $\groups$
  such that $F\mathcal{S}\subseteq\overline{\mathcal{S}}$ and
  $F\mathcal{D}\subseteq\overline{\mathcal{D}}$.
  Suppose that
  $\overline{\mathcal{D}}$ is a small-orthogonality class, that is, there exists
  a set $\mathcal{S}_0\subseteq\overline{\mathcal{S}}$ such that
  $\overline{\mathcal{D}}=\mathcal{S}_0^\perp$.
  Then there exists
  an uncountable cardinal $\lambda$ such that $\overline{\mathcal{D}}$ is
  closed under $\lambda$-directed colimits; it is enough that
  the cofinality of $\lambda$ is greater than all the
  cardinalities of domains and targets of maps in
  $\mathcal{S}_0$. Since
  $\mathcal{D}=F^{-1}(\overline{\mathcal{D}})$ Remark
  \ref{remark-lim-directed-colimit} implies that $\mathcal{D}$ is
  closed under $\lambda$-directed colimits. As the orthogonality
  class $\mathcal{D}$ is closed under arbitrary limits, by
  \cite[Corollary]{hebert-rosicky} it is a $\lambda$-orthogonality
  class and thus a small-orthogonality class \cite[1.35 and the following]{adamek-rosicky};
  this contradiction completes the proof.
\end{proof}

\section{Homotopy category}
\mylabel{section-homotopy}

We translate the results of the preceding section to the homotopy category
$Ho$ and to the pointed homotopy category $Ho_*$. In this section we
obtain an orthogonality preserving embedding of $\g$ into $Ho$ and a
characterization of Vop\v enka's principle in terms of the
homotopy theory. Results of \cite{casacuberta-advances} were close
to such a characterization. In this section {\em space} means
simplicial set; whenever a space $X$ is a right argument of a $\Hom$
or of a mapping space functor we assume that $X$ is fibrant.

The functor $B:\groups\to Ho_*$ which sends a group $G$ to the
Eilenberg--Mac Lane space $K(G,1)$ is full and faithful. Since
$\Hom_{Ho}(X,Y)=\Hom_{Ho_*}(X,Y)/\pi_1(Y)$ Theorem \ref{theorem-main}
implies that the
composition $BF$ followed by the forgetful functor $Ho_*\to Ho$
induces the bijections
\begin{equation}\mylabel{equation-bf-hom}
BF_{X,Y}:\Hom_{\mg}(X,Y)\cup\{*\}\to\Hom_{Ho}(BFX,BFY)
\end{equation}
where $*$ is sent to the constant map.

We say that a morphism $f:A\to B$ is {\em orthogonal} to an object
$X$ in $Ho$ if it induces an equivalence of the mapping spaces
$$\map(B,X)\to\map(A,X)$$
This notion of orthogonality is used, as in Section \ref{section-orthogonal},
to define orthogonal pairs $(\mathcal{S},\mathcal{D})$ whose
right members $\mathcal{D}$ are called orthogonality classes.
Analogously we define orthogonality in $Ho_*$ by means of
the pointed mapping spaces $\map_*(C,X)$. The fibration
$\map_*(C,X)\to\map(C,X)\to X$ for any $C$ shows that for $X$
connected we have $f\perp X$ in $Ho$ if and only if
$f\perp X$ in $Ho_*$ for any choice of base points \cite[Chapter 1, A.1]{farjoun}.

If $X$ is an
Eilenberg--Mac Lane space then $\map(A,X)$ is homotopy equivalent to
a discrete space whose underlying set is $\Hom_{Ho}(A,X)$. Thus
\eqref{equation-bf-hom} yields the following.

\begin{lemma}\mylabel{lemma-bf-preserves-orthogonality}
  Let $f:\Gamma\to\Phi$ be a morphism and $\Delta$ be an object in
  $\mg$. Then $f\perp\Delta$ if and only if $BFf\perp BF\Delta$.
\end{lemma}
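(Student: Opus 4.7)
The plan is to package two ingredients the paper has already arranged: the $\pi_0$ bijection \eqref{equation-bf-hom} coming from $BF$, and the observation (recorded just before the lemma) that mapping spaces into an Eilenberg--Mac Lane space $K(G,1)$ are homotopy equivalent to the discrete set $\Hom_{Ho}(-,K(G,1))$. The whole argument then becomes a chain of reformulations.

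First I would unpack the two sides of the equivalence. On the combinatorial side, $f\perp\Delta$ in $\mg$ means that the precomposition map $f^{*}\colon \Hom_{\mg}(\Phi,\Delta)\to\Hom_{\mg}(\Gamma,\Delta)$ is a bijection. Since $*\circ f=*$, this is equivalent to the basepointed bijection $\Hom_{\mg}(\Phi,\Delta)\cup\{*\}\to\Hom_{\mg}(\Gamma,\Delta)\cup\{*\}$. On the homotopy side, $BFf\perp BF\Delta$ in $Ho$ means that $\map(BF\Phi,BF\Delta)\to\map(BF\Gamma,BF\Delta)$ is a weak homotopy equivalence. Invoking the discreteness of $\map(-,BF\Delta)$ stated in the paragraph preceding the lemma, this weak equivalence is equivalent to the induced map on $\pi_{0}$,
\[
\Hom_{Ho}(BF\Phi,BF\Delta)\to\Hom_{Ho}(BF\Gamma,BF\Delta),
\]
being a bijection of sets.

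Next I would use \eqref{equation-bf-hom}: the bijections $BF_{X,Y}$ are natural in both arguments, so the square in which the horizontal arrows are $BF_{\Phi,\Delta}$ and $BF_{\Gamma,\Delta}$ and the vertical arrows are induced by $f$ and $BFf$ respectively commutes. Therefore precomposition with $BFf$ on $\Hom_{Ho}(-,BF\Delta)$ is a bijection if and only if precomposition with $f$ on $\Hom_{\mg}(-,\Delta)\cup\{*\}$ is a bijection, and under the basepoint-preserving observation in the first paragraph this matches $f\perp\Delta$ exactly. Chaining the three equivalences delivers the lemma.

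The only delicate step is the appeal to discreteness of mapping spaces into $BF\Delta$, which the paper absorbs into the sentence immediately above the lemma; I would present it exactly as invoked there, since it is the precise translation of the bijection on sets furnished by $BF$ to an equivalence of (homotopically discrete) mapping spaces. No further homotopical input is needed, because the Hom-set bijection from Theorem~\ref{theorem-main-2} has already done the whole work on $\pi_{0}$, and by discreteness there is nothing more to check.
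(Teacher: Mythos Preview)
Your proposal is correct and is exactly the approach the paper takes: the paper's ``proof'' is the single sentence ``Thus \eqref{equation-bf-hom} yields the following,'' and you have simply unpacked that sentence into its constituent steps (discreteness of the mapping spaces, naturality of the bijection, and the trivial observation that adding a fixed point $*$ preserves bijectivity). One minor slip: in your last paragraph you attribute the $\pi_0$ bijection to Theorem~\ref{theorem-main-2}, but \eqref{equation-bf-hom} is derived in the paper from Theorem~\ref{theorem-main}; this does not affect the argument.
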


The following strengthens the result of
\cite{casacuberta-advances}.

\begin{theorem}
  The following conditions are equivalent:
  \begin{itemize}
    \item[(V2)] Every orthogonality class of
      graphs is a small-orthogonality class.
    \item[({\em ho}V)] Every orthogonality class in the
      homotopy category is a small-orthogonality class.
  \end{itemize}
\end{theorem}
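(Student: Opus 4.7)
The plan is to use Lemma \ref{lemma-bf-preserves-orthogonality} to transfer orthogonality between $\mg$ (hence $\g$) and $Ho$, so that the argument of Proposition \ref{proposition-vopenka-groups} can be replayed with $BF$ in place of $F$. Concretely, I prove (V2) $\Rightarrow$ ({\em ho}V) by invoking the literature, and ({\em ho}V) $\Rightarrow$ (V2) by the contrapositive, mirroring the proof of Proposition \ref{proposition-vopenka-groups}.

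For the direction (V2) $\Rightarrow$ ({\em ho}V): since (V2) is equivalent to Vop\v enka's principle via \cite[Corollary 6.24]{adamek-rosicky} and \cite[Example 6.12]{adamek-rosicky}, this implication follows from the work of Casacuberta, Scevenels and Smith \cite{casacuberta-advances}, who established that Vop\v enka's principle implies that every orthogonality class in the (unpointed) homotopy category is a small-orthogonality class.

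For the main new direction ({\em ho}V) $\Rightarrow$ (V2), I argue by contrapositive. Suppose $\mathcal{D}$ is an orthogonality class in $\g$, with $\mathcal{S}=\mathcal{D}^\perp$, which is \emph{not} a small-orthogonality class. Consider the class of morphisms $\mathcal{S}'=\{BFf:f\in\mathcal{S}\}$ in $Ho$, and let $\overline{\mathcal{D}}=\mathcal{S}'^\perp$; this is an orthogonality class in $Ho$. Lemma \ref{lemma-bf-preserves-orthogonality} provides the crucial equivalence: for every m-graph $\Gamma$,
$$\Gamma\in\mathcal{D}\iff BF\Gamma\in\overline{\mathcal{D}}.$$
Assume ({\em ho}V). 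Then $\overline{\mathcal{D}}=\mathcal{S}_0^\perp$ for some \emph{set} $\mathcal{S}_0$ of morphisms in $Ho$. Choose an uncountable cardinal $\lambda$ whose cofinality exceeds the cardinalities of all domains and codomains of maps in $\mathcal{S}_0$; then $\overline{\mathcal{D}}$ is closed under $\lambda$-filtered homotopy colimits. Because $F$ preserves filtered colimits of graphs (Remark \ref{remark-lim-directed-colimit}) and $B$ takes filtered colimits of groups to filtered homotopy colimits of $K(\pi,1)$-spaces, the composition $BF$ takes $\lambda$-filtered colimits in $\g$ to $\lambda$-filtered homotopy colimits in $Ho$. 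Hence $\mathcal{D}=(BF)^{-1}(\overline{\mathcal{D}})$ is closed under $\lambda$-filtered colimits in $\g$. Being an orthogonality class, it is also closed under arbitrary limits; by \cite[Corollary]{hebert-rosicky} (applied exactly as in the proof of Proposition \ref{proposition-vopenka-groups}), $\mathcal{D}$ is a $\lambda$-orthogonality class, hence a small-orthogonality class, contradicting the choice of $\mathcal{D}$.

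The main obstacle is verifying that $BF$ sends $\lambda$-filtered colimits of graphs to $\lambda$-filtered \emph{homotopy} colimits of spaces, in a form strong enough that closure of $\overline{\mathcal{D}}$ under the latter forces closure of $\mathcal{D}$ under the former. This reduces to the standard fact that filtered colimits of $K(G_\alpha,1)$'s (as simplicial sets) are already models for the corresponding filtered homotopy colimits, together with Remark \ref{remark-lim-directed-colimit}; once this is in place, the remainder of the argument is essentially bookkeeping: the role played by the accessibility of $\groups$ in Proposition \ref{proposition-vopenka-groups} is replaced here by the corresponding cardinality estimate for the objects of $\mathcal{S}_0\subseteq Ho$.
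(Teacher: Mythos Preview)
Your argument is correct and follows the same overall strategy as the paper, but it takes a slightly more direct route. The paper first invokes Proposition~\ref{proposition-vopenka-groups} to produce a non-small orthogonality class $\mathcal{D}$ in $\groups$, then passes to $Ho$ via $B$, and uses the map $S^2\to *$ to identify $B\mathcal{D}^{\perp\perp}$ explicitly as the class of spaces whose components lie in $B\mathcal{D}$; this concrete description is what lets the paper deduce that $B\mathcal{D}$ itself is closed under $\lambda$-directed homotopy colimits, and hence that $\mathcal{D}$ is closed under $\lambda$-directed colimits in $\groups$. You instead skip the detour through $\groups$ entirely, work directly with $BF$ and Lemma~\ref{lemma-bf-preserves-orthogonality}, and pull back along $(BF)^{-1}$; this obviates the $S^2\to *$ step, since you only ever need $\Gamma\in\mathcal{D}\iff BF\Gamma\in\overline{\mathcal{D}}$, not a description of all of $\overline{\mathcal{D}}$. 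The endgame (closure under $\lambda$-filtered colimits plus \cite[Corollary]{hebert-rosicky}) is identical, applied in $\g$ rather than in $\groups$. One small point of hygiene: your orthogonality class $\mathcal{D}$ lives in $\g$, while Lemma~\ref{lemma-bf-preserves-orthogonality} is stated for $\mg$; you are implicitly using that the full embedding $I:\g\to\mg$ preserves and reflects orthogonality, exactly as in the proof of Proposition~\ref{proposition-weak-vopenka-groups}.
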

\begin{proof}
The implication (V2)$\implies$({\em ho}V) is \cite[Theorem
5.3]{casacuberta-advances}.

Assuming the negation of (V2), Proposition
\ref{proposition-vopenka-groups} yields an orthogonal pair
$(\mathcal{S},\mathcal{D})$ in the category of groups such that
$\mathcal{D}$ is not of the form $\mathcal{S}^\perp_0$ for any set
of morphisms $\mathcal{S}_0$. Let $f:S^2\to *$ be a map from a
$2$-sphere to a point. It is clear that a space $X$ is orthogonal to
$f$ if and only if all the connected components of $X$ are
Eilenberg--Mac Lane spaces. Thus $f\in B\mathcal{D}^\perp$ and
$B\mathcal{D}^{\perp\perp}$ is the class consisting of those spaces
all of whose connected components are homotopy equivalent to a member
of $B\mathcal{D}$.

The remainder of the proof is similar to the proof of Proposition
\ref{proposition-vopenka-groups}. If $B\mathcal{D}^{\perp\perp}$ is
a small orthogonality class then it is closed under $\lambda$-directed
homotopy colimits, for some ordinal $\lambda$ of sufficiently large
cofinality. But then $B\mathcal{D}$ is closed under $\lambda$-directed
homotopy colimits, hence $\mathcal{D}$ is closed under $\lambda$-directed
colimits, hence $\mathcal{D}$ is a small orthogonality class, which is a
contradiction.
\end{proof}

\section{Large localizations of finite groups}
\mylabel{section-large-localizations}

In this section we obtain a third construction of a class of
localizations which send a finite simple group to groups of
arbitrarily large cardinalities. Previous examples of such
localizations are described in \cite{grs}, \cite{gs} and
\cite{ap}.

Let $M$ be a group that is part of a graph of groups satisfying
conditions C1--C8 stated before Lemma \ref{lemma-exists}; we may
take $M=M_{23}$, the Mathieu group.

\begin{theorem}
  For any infinite cardinal $\kappa$ there exists a localization $L$
  in the category of groups such that $LM$ has cardinality
  $\kappa$.
\end{theorem}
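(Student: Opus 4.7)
The plan is to take $\Gamma$ to be a rigid graph of cardinality $\kappa$ (meaning $\Hom_{\g}(\Gamma,\Gamma)=\{\id_\Gamma\}$) and to exhibit the inclusion $\eta:M=F\emptyset\hookrightarrow F\Gamma$ as the unit of a localization functor $L$ with $LM=F\Gamma$. By a classical ZFC construction, rigid graphs exist in every infinite cardinality; fix such a $\Gamma$, viewed as an m-graph via the full embedding $I:\g\to\mg$, so that $\Hom_{\mg}(\Gamma,\Gamma)=\{\id_\Gamma\}$ as well. The cardinality of $F\Gamma$ is then $\kappa$: it is bounded above by $\kappa$ since $F\Gamma$ is a colimit of a $\kappa$-indexed diagram of finite groups, and bounded below by $\kappa$ because the $\kappa$ distinct vertex groups of $G\Gamma$ embed into $F\Gamma$ by Lemma \ref{lemma-prop-vertex-subgroups}.

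The crux is the verification that $F\Gamma$ is $\eta$-local. Applying Theorem \ref{theorem-main-2} with $\Delta=\Gamma$, rigidity collapses $\Hom_{\mg}(\Gamma,\Gamma)$ to a single element and yields
$$
  \Hom(F\Gamma,F\Gamma)\;\cong\;\ohom(M,F\Gamma)\cup\{*\}\;\cong\;\Hom(M,F\Gamma),
$$
and tracing through the bijection of Theorem \ref{theorem-main-2} shows that this identification is precisely the restriction map $h\mapsto h|_M$. Hence $\eta^*:\Hom(F\Gamma,F\Gamma)\to\Hom(M,F\Gamma)$ is a bijection, i.e.\ $F\Gamma$ is $\eta$-local.

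The remainder is formal. Since $\groups$ is a locally finitely presentable category, the $\eta$-localization functor $L_\eta$ exists by the small object argument (or by the reflectivity results in \cite{adamek-rosicky}); it only remains to identify $L_\eta M$ with $F\Gamma$. For this I would check that the pair $(F\Gamma,\eta)$ satisfies the universal property of the initial $\eta$-local group under $M$: for any $\eta$-local $X$ and any $f:M\to X$, the $\eta$-locality of $X$ provides a unique extension $F\Gamma\to X$ along $\eta$. Setting $L=L_\eta$ then gives $|LM|=|F\Gamma|=\kappa$, as required. The one place where the paper's machinery does substantive work is the $\eta$-locality check in the middle paragraph; once that is in hand, the existence of the localization and the identification $LM=F\Gamma$ are routine.
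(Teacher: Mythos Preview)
Your proof is correct and follows essentially the same route as the paper: choose a rigid graph $\Gamma$ of cardinality $\kappa$, show that $\eta=Fi:F\emptyset\to F\Gamma$ is orthogonal to $F\Gamma$, and deduce the existence of a localization with $LM=F\Gamma$. The paper packages the orthogonality check as an appeal to Lemma~\ref{lemma-f-preserves-orthogonality} (applied to $i:\emptyset\to\Gamma$, which is trivially orthogonal to the rigid $\Gamma$), whereas you unwind that lemma directly via Theorem~\ref{theorem-main-2}; and for the final step the paper cites \cite[Lemma~2.1]{casacuberta-survey} while you invoke the small-object argument in a locally presentable category and then identify $L_\eta M$ with $F\Gamma$ by the universal property---this is precisely the content of the cited lemma. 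Your explicit verification that $|F\Gamma|=\kappa$ is a detail the paper leaves implicit.
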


\begin{proof}
  Let $F$ be the functor constructed in Section
  \ref{section-construction}. We have $M=F\emptyset$. We know
  \cite{vopenka} that for every infinite cardinal $\kappa$ there
  exists a graph $\Gamma$ of cardinality $\kappa$ such that the
  identity is the unique morphism $\Gamma\to\Gamma$. Let
  $i:\emptyset\to\Gamma$ be the inclusion of the empty set. Clearly
  $i$ is orthogonal to $\Gamma$. Let $\eta=Fi:F\emptyset\to
  F\Gamma$. Lemma \ref{lemma-f-preserves-orthogonality} implies
  that $\eta\perp F\Gamma$.
  By \cite[Lemma 2.1]{casacuberta-survey} there exists a
  localization $L$ in the category of groups such that
  $LF\emptyset=F\Gamma$, which completes the proof.
\end{proof}

\section{Closing remarks}

\comment{
The author is often asked if it is possible to construct a similar
functor $F$ so that $F\emptyset$ is a trivial group. It is
possible if the source category is $\g$, not $\mg$. For such $F$
we may have Theorem \ref{theorem-main} still true, but not
Theorem \ref{theorem-main-2}: the homomorphism $F\emptyset\to
F\Gamma$ is never orthogonal to $F\Gamma$ (cf. Lemma
\ref{lemma-f-preserves-orthogonality}), unless $\Gamma$ is empty.
}

It is intriguing to ask the following.

{\em Question:} Does there exist a faithful functor $F$ from the
category of graphs to the category of abelian groups such that
$f\perp\Gamma$ in the category of graphs
if and only if $Ff\perp F\Gamma$ in the category of abelian groups?

Some results suggest that the category of abelian groups might
be sufficiently comprehensive to allow such a functor: there exists a
considerable literature on abelian groups with
prescribed endomorphism rings (see for example
\cite[Chapter V]{krylov}, \cite[Chapter XIV]{almost-free-modules}, \cite{dugas}).
In fact the example of an orthogonality class of groups that is not a
small-orthogonality class, constructed in
\cite[Theorem 6.3]{casacuberta-advances} under the assumption of nonexistence of
measurable cardinals, consists of abelian groups.
Also there exist
arbitrarily large sets $\{A_i\}_{i\in I}$ of abelian groups such
that $\Hom(A_i,A_i)=\mathbb{Z}$ and $\Hom(A_i,A_j)=0$ for $i\neq j$
in $I$ \cite{shelah} and such that $\Hom(A_i,A_i)=A_i$ and
$\Hom(A_i,A_j)=0$ for $i\neq j$ in $I$ \cite{erings}.

\end{document}